\newtheorem{thm}{Theorem}[section]
\newtheorem{lem}[thm]{Lemma}
\newtheorem{prop}[thm]{Proposition}
\newtheorem{cor}[thm]{Corollary}
\theoremstyle{definition}
\newtheorem{defn}[thm]{Definition}
\newtheorem{exam}[thm]{Example}
\newcommand{\id}{\mathrm{id}}
\newcommand{\g}{\Gamma}
\newcommand{\cross}[1]{C(#1) {\rtimes}_r \Gamma}
\newcommand{\Cst}{\mathrm{C}^{\ast}}
\newcommand{\Cstr}{\mathrm{C}_r^{\ast}}
\newcommand{\cS}{{\mathcal S}}
\newcommand{\cH}{{\mathcal H}}
\newcommand{\cM}{{\mathcal M}}
\newcommand{\cU}{{\mathcal U}}
\newcommand{\cV}{{\mathcal V}}
\newcommand{\cA}{{\mathcal A}}
\newcommand{\IB}{{\mathbb B}}
\newcommand{\IC}{{\mathbb C}}
\newcommand{\bX}{\tilde{X}}
\title{Uniformly recurrent subgroups and the ideal structure of reduced crossed products}
\author{Takuya Kawabe}
\begin{document}
\maketitle
\markright{UNIFORMLY RECURRENT SUBGROUPS AND THE IDEAL STRUCTURE}
\begin{abstract}
We study the ideal structure of reduced crossed product of topological dynamical systems of a countable discrete group.
More concretely, for a compact Hausdorff space $X$ with an action of a countable discrete group $\Gamma$, we consider the absence of a non-zero ideals in the reduced crossed product $C(X) \rtimes_r \Gamma$ which has a zero intersection with $C(X)$.
We characterize this condition by a property for amenable subgroups of the stabilizer subgroups of $X$ in terms of the Chabauty space of $\Gamma$.
This generalizes Kennedy's algebraic characterization of the simplicity for a reduced group $\mathrm{C}^{*}$-algebra of a countable discrete group.
\end{abstract}
\section{Introduction}
Throughout this paper, $\g$ denotes a countable discrete group. 
We say X is a \emph{compact $\g$-space} if $X$ is a compact Hausdorff space 
with a continuous $\g$-action ${\g \times X  \to  X, \ (t, x) \mapsto tx}$.
We study the ideal structure of  the reduced crossed product $\cross{X}$.
The simplest situation is the following.
\begin{defn}
Let $X$ be a compact $\g$-space. 
We say $C(X)$ \emph{separates the ideals} in $\cross{X}$ if for every ideal $I$ in $\cross{X}$,
we have $I = ( I \cap C(X) ) \rtimes_r \g$.
\end{defn} 
In other words, there is one-to-one correspondence between the ideals in $\cross{X}$ and
the $\g$-invariant ideals in $C(X)$ (see \cite[Propostion 1.1]{sierakowski}). 
\begin{defn}
We say that a compact $\g$-space $X$ satisfies the 
\emph{intersection \ property}
if every non-zero ideal in $\cross{X}$ has a non-zero intersection with $C(X)$.
\end{defn}
Then we have the following result.
\begin{thm}[Sierakowski, {\cite[Theorem 1.10]{sierakowski}}]
Let $X$ be a compact $\g$-space. Then $C(X)$ separates the ideals in $\cross{X}$ if and only if $X$ satisfies the following properties.
\begin{enumerate} 
\item The action of $\g$ on $X$ is exact. 
\item Every $\g$-invariant closed set in $X$ has the intersection property.
\end{enumerate}
\end{thm}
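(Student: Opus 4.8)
The plan is to run both implications through the natural quotient $\ast$-homomorphisms attached to $\g$-invariant closed subsets, reducing everything to the interplay between these quotients and the intersection property. I fix notation first. For a $\g$-invariant closed set $Y\subseteq X$ with open complement $U=X\setminus Y$, the equivariant restriction map $C(X)\to C(Y)$ induces a surjective $\ast$-homomorphism $q_Y\colon\cross{X}\to\cross{Y}$ whose restriction to the canonical copy of $C(X)$ is again the restriction map; consequently $C_0(U)\rtimes_r\g\subseteq\ker q_Y$, where $C_0(U)\rtimes_r\g$ is identified with the closed ideal of $\cross{X}$ generated by $C_0(U)$. Recall that, by definition, exactness of the action (property (i)) is exactly the assertion that $\ker q_Y=C_0(U)\rtimes_r\g$ for every such $Y$, equivalently that $q_Y$ descends to an isomorphism $\cross{X}/(C_0(U)\rtimes_r\g)\cong\cross{Y}$. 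I will also use throughout that, for any ideal $I$ of $\cross{X}$, the set $I\cap C(X)$ is a $\g$-invariant ideal of $C(X)$, hence equals $C_0(U)$ for some $\g$-invariant open $U$, and that the ideal it generates, $(I\cap C(X))\rtimes_r\g$, is contained in $I$.

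Assume (i) and (ii), and let $I\trianglelefteq\cross{X}$. Put $J=I\cap C(X)=C_0(U)$ and $Y=X\setminus U$. By (i) we have $J\rtimes_r\g=\ker q_Y$, so $\overline I:=q_Y(I)$ is an ideal of $\cross{Y}$. The crucial point is $\overline I\cap C(Y)=0$: if $b\in C(Y)$ lies in $\overline I$, lift $b$ to $a\in C(X)$ along the restriction map and pick $x\in I$ with $q_Y(x)=q_Y(a)=b$; then $a-x\in\ker q_Y\subseteq I$, so $a\in I\cap C(X)=J$ and hence $b=q_Y(a)=0$. By (ii) applied to $Y$ this forces $\overline I=0$, i.e. $I\subseteq\ker q_Y=J\rtimes_r\g$; together with $J\rtimes_r\g\subseteq I$ we conclude $I=(I\cap C(X))\rtimes_r\g$, so $C(X)$ separates the ideals.

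Conversely, assume $C(X)$ separates the ideals. For (i), fix a $\g$-invariant closed $Y$ with $U=X\setminus Y$ and apply the hypothesis to $\ker q_Y$: it equals $(\ker q_Y\cap C(X))\rtimes_r\g$, while $\ker q_Y\cap C(X)=\ker\bigl(q_Y|_{C(X)}\bigr)=C_0(U)$; hence $\ker q_Y=C_0(U)\rtimes_r\g$, which is exactness. For (ii), fix a $\g$-invariant closed $Y$ and a non-zero ideal $K\trianglelefteq\cross{Y}$, and set $I=q_Y^{-1}(K)$; since $q_Y$ is surjective, $q_Y(I)=K\neq0$, so $I\neq0$. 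The hypothesis gives $I=(I\cap C(X))\rtimes_r\g$. If $K\cap C(Y)$ were $0$, then every $a\in I\cap C(X)$ would satisfy $q_Y(a)\in K\cap C(Y)=0$, i.e. $I\cap C(X)\subseteq\ker q_Y$, whence $I=(I\cap C(X))\rtimes_r\g\subseteq\ker q_Y$ and $K=q_Y(I)=0$, a contradiction. Therefore $K\cap C(Y)\neq0$, i.e. $Y$ has the intersection property.

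The algebra above is light; the load-bearing facts are the basic structure of reduced crossed products --- namely that $C_0(U)\rtimes_r\g$ embeds as the ideal generated by $C_0(U)$ and that $q_Y$ exists and is surjective at the reduced level --- together with the observation that these are compatible with exactness only because exactness is, by definition, the statement $\ker q_Y=C_0(U)\rtimes_r\g$. I expect the only real obstacle to be organizing these preliminary $\mathrm{C}^{\ast}$-algebraic facts and checking carefully that $\ker q_Y\cap C(X)=C_0(U)$ and that the passage to $\cross{X}/(C_0(U)\rtimes_r\g)$ is legitimate; the core content sits in the ``if'' direction's quotient argument.
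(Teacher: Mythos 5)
Your argument is correct: both implications go through as written, and the standard facts you lean on (functoriality of the reduced crossed product for the equivariant surjection $C(X)\to C(Y)$, the identification of $C_0(U)\rtimes_r\Gamma$ with the ideal generated by $C_0(U)$, and Sierakowski's definition of exactness as $\ker q_Y=C_0(U)\rtimes_r\Gamma$) are exactly the right inputs. Note that the paper itself does not prove this statement --- it is quoted from Sierakowski --- so there is no internal proof to compare with; your argument is essentially the standard one from Sierakowski's paper, and the only thing to add would be a word on why $q_Y$ exists at the reduced level (the induced regular representation / conditional expectation argument), which you correctly flag as a preliminary fact rather than a gap.
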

The purpose of this paper is to characterize the intersection property of $\g$-spaces 
in terms of dynamical systems.
For an amenable group $\g$, Kawamura and Tomiyama showed that the intersection properties of compact $\g$-spaces is equivalent to topological freeness.
\begin{thm}[Kawamura--Tomiyama, {\cite[Theorem 4.1]{kt}}]\label{thmkt}
If $\g$ is amenable, the following are equivalent.
\begin{enumerate}
\item The space $X$ has the intersection property.
\item For every $t \in \g \setminus \{e\}$, we have $\mathrm{Fix}(t)^{\circ} = \emptyset$. 
\end{enumerate}
\end{thm}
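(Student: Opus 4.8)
The plan is to establish the two implications separately. Throughout, write $A = \cross{X}$, let $E \colon A \to C(X)$ be the canonical faithful conditional expectation, let $\{u_t\}_{t \in \g}$ be the canonical unitaries and $\alpha$ the induced $\g$-action on $C(X)$, and recall that the algebraic crossed product $\{\sum_{t \in F} f_t u_t : F \subseteq \g \text{ finite},\ f_t \in C(X)\}$ is dense in $A$. Amenability of $\g$ will be used only for the implication (i)$\Rightarrow$(ii); the converse holds for every countable group.

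For (ii)$\Rightarrow$(i) the heart of the matter is the approximation estimate: \emph{if $\mathrm{Fix}(t)^{\circ} = \emptyset$ for all $t \neq e$, then for every $b \in A$ with $b \ge 0$ and every $\varepsilon > 0$ there is $h \in C(X)$ with $0 \le h \le 1$ such that $\|hbh - hE(b)h\| < \varepsilon$ and $\|hE(b)h\| > \|E(b)\| - \varepsilon$.} To prove it, choose a finite sum $b' = \sum_{t \in F} f_t u_t$ with $e \in F$ and $\|b - b'\| < \varepsilon/2$. Each $\mathrm{Fix}(t)$ with $t \in F \setminus \{e\}$ is closed with empty interior, so the open set $\{x : E(b)(x) > \|E(b)\| - \varepsilon\} \cap \bigcap_{t \in F \setminus \{e\}} (X \setminus \mathrm{Fix}(t))$ is non-empty; pick $x_0$ in it. By continuity of the action one finds an open neighbourhood $V$ of $x_0$ with $tV \cap V = \emptyset$ for all $t \in F \setminus \{e\}$ and $E(b) > \|E(b)\| - \varepsilon$ on $V$, and then a bump function $h$ with $h(x_0) = 1$ and $\overline{\{h \neq 0\}} \subseteq V$. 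Since then $h \cdot \alpha_t(h) = 0$ for $t \in F \setminus \{e\}$, one computes $hb'h = h^2 f_e = hE(b')h$, and the two displayed estimates follow from $\|hbh - hb'h\| \le \|b - b'\|$, $\|E(b) - E(b')\| \le \|b - b'\|$ and $h(x_0) = 1$. Granting this, suppose for contradiction that $I$ is a non-zero ideal with $I \cap C(X) = 0$. Then $q \colon A \to A/I$ is isometric on $C(X)$, so for $b \in I$ with $b \ge 0$ and any $\varepsilon > 0$, choosing $h$ as above gives $0 = \|q(b)\| \ge \|q(hbh)\| \ge \|q(hE(b)h)\| - \varepsilon = \|hE(b)h\| - \varepsilon > \|E(b)\| - 2\varepsilon$. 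Hence $E(b) = 0$, so $b = 0$ by faithfulness of $E$, whence $I = 0$ --- a contradiction. Thus $X$ has the intersection property.

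For (i)$\Rightarrow$(ii) I argue contrapositively: assuming some $t_0 \neq e$ has $U := \mathrm{Fix}(t_0)^{\circ} \neq \emptyset$, I build a non-zero ideal meeting $C(X)$ trivially. The set $U$ is $t_0$-invariant and $t_0$ acts on it trivially, so every $g \in C(X)$ vanishing off $U$ satisfies $\alpha_{t_0}(g) = g$, i.e. $g u_{t_0} = u_{t_0} g$; fix such a $g$ with $g \ge 0$, $g \neq 0$, and note $g(u_{t_0} - 1) \neq 0$ since $E(u_{t_0}^{*} g(u_{t_0}-1)) = g$. For $x \in X$ let $H_x \le \g$ be the stabiliser of $x$, which is amenable since $\g$ is, and consider the covariant representation of $(C(X), \g)$ on $\ell^2(\g/H_x)$ in which $\g$ acts by the quasi-regular representation and $C(X)$ acts diagonally via $f \mapsto (sH_x \mapsto f(sx))$ (well defined because $H_x$ fixes $x$). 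As $\g$ is amenable, $A$ coincides with the full crossed product, so this covariant pair integrates to a representation $\sigma_x$ of $A$; put $\Sigma = \bigoplus_{x \in X} \sigma_x$, which is faithful on $C(X)$. Now $\sigma_x(g(u_{t_0} - 1)) \delta_{sH_x} = g(sx)(\delta_{t_0 s H_x} - \delta_{s H_x})$, and whenever $g(sx) \neq 0$ we have $sx \in U \subseteq \mathrm{Fix}(t_0)$, so $t_0(sx) = sx$ and $\delta_{t_0 s H_x} = \delta_{s H_x}$; hence $\sigma_x(g(u_{t_0} - 1)) = 0$ for every $x$, i.e. $g(u_{t_0} - 1) \in \ker \Sigma$. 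Therefore the ideal $I$ generated by $g(u_{t_0} - 1)$ is non-zero and contained in $\ker \Sigma$, so $I \cap C(X) \subseteq \ker\Sigma \cap C(X) = 0$, and $X$ fails the intersection property.

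I expect the approximation estimate to be the main obstacle: one must simultaneously truncate $b$ to a finite Fourier sum and control the finitely many fixed-point sets together with the location of the (near-)maximum of $E(b)$, and it is precisely the fact that each $\mathrm{Fix}(t)$ is \emph{closed} with empty interior that keeps the relevant finite intersection of open sets dense. In the second implication the only delicate point is that each covariant representation $\sigma_x$ descends to the reduced crossed product, for which amenability of $\g$ is invoked in the guise ``full $=$ reduced''; weakening this hypothesis (to amenability of stabilisers, together with uniform-recurrence conditions) is exactly what the rest of the paper addresses.
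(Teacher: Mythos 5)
Your argument is correct in both directions; note that the paper does not reprove Kawamura--Tomiyama but cites it, its in-house counterpart being Lemma~\ref{lemsta}, so the natural comparison is with that proof. For (ii)$\Rightarrow$(i) you give the classical localization argument: truncate $b$ to a finite Fourier sum, use that the finitely many sets $\mathrm{Fix}(t)$, $t\in F\setminus\{e\}$, are closed with empty interior to find a bump function $h$ with $h\,\alpha_t(h)=0$, and conclude $\|q(b)\|\ge\|E_X(b)\|-2\varepsilon$; this needs no amenability and no group-theoretic input, whereas the paper's Lemma~\ref{lemsta}(i) reaches the same conclusion (in greater generality) from density of points with $\Cst$-simple stabilizers, via the expectations $E_x$ and the state $\phi_x$ built from $C(X)+I$. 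For (i)$\Rightarrow$(ii) your construction is essentially the same idea as Lemma~\ref{lemsta}(ii): produce a representation faithful on $C(X)$ that kills $g(\lambda_{t_0}-1)$ for $g$ supported in $\mathrm{Fix}(t_0)^{\circ}$; the difference is that you represent on $\ell_2(\g/\g_x)$ and descend the covariant pair to $\cross{X}$ by invoking full $=$ reduced for amenable $\g$, while the paper works on $\ell_2(\g/\g_x^{\circ})$ and only uses amenability of $\g_x^{\circ}$. Your route is cleaner under the theorem's blanket amenability hypothesis (the identity full $=$ reduced settles the continuity issue at once), but the paper's version isolates the weaker hypothesis on the stabilizers, which is exactly what its later generalizations require; your smaller verifications (that $\alpha_{t_0}(g)=g$ for $g$ vanishing off $\mathrm{Fix}(t_0)^{\circ}$, that $g(t_0sx)=g(sx)$ in the displayed computation, and that $g(\lambda_{t_0}-1)\ne 0$ via $E_X$) all check out, so I see no gap.
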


We say that $\g$ is \emph{$\Cst$-simple} if its reduced group $\Cst$-algebra $\Cstr \g$ is simple.
In recent work \cite{kk}, Kalantar and Kennedy established a dynamical characterization of $\Cst$-simplicity, and Breuillard, Kalantar, Kenndey and Ozawa proved that many groups are $\Cst$-simple. 
In more recent work \cite{kennedy}, Kennedy showed an algebraic characterization of $\Cst$-simplicity, as follows. 
\begin{thm}[Kennedy, {\cite[Theorem 6.3]{kennedy}}] \label{thmken}
A countable discrete group is $\Cst$-simple if and only if it satisfies the following condition:
For every amenable subgroup $\Lambda \le \g$, there exists a sequence $(g_n)$ such that for every subsequence $(g_{n_k})$ of $(g_n)$, we have
\[
\bigcap_k g_{n_k} \Lambda g_{n_k}^{-1} = \{e\}.
\] 
Equivalently, the sequence $(g_{n} \Lambda g_{n}^{-1})$ converges to $\{e\}$ in the Chabauty topology.
\end{thm}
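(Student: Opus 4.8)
The plan is to derive the theorem from the dynamical characterization of $\Cst$-simplicity together with the theory of uniformly recurrent subgroups. Write $\mathrm{Sub}(\g)$ for the Chabauty space of subgroups of $\g$, a compact metrizable space (since $\g$ is countable) carrying the conjugation action of $\g$; a \emph{uniformly recurrent subgroup} (URS) is a minimal closed $\g$-invariant subset of $\mathrm{Sub}(\g)$, \emph{amenable} if all of its members are amenable and \emph{non-trivial} if it is not $\{\{e\}\}$. I would first record two elementary reductions. (a) For an amenable $\Lambda \le \g$, the stated conclusion is equivalent to $\{e\} \in \overline{\g\cdot\Lambda}$ in $\mathrm{Sub}(\g)$: if some sequence $g_n\Lambda g_n^{-1}$ Chabauty-converges to $\{e\}$ then so does every subsequence, whence $\bigcap_k g_{n_k}\Lambda g_{n_k}^{-1}=\{e\}$, and conversely metrizability lets one extract such a sequence from the orbit closure. (b) Every member of $\overline{\g\cdot\Lambda}$ is amenable: a finitely generated subgroup of such a group is, by definition of the Chabauty topology, eventually contained in a conjugate of $\Lambda$, hence amenable, and amenability is a local property. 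From (a) and (b), failure of the condition for some amenable $\Lambda$ is equivalent to the existence of a closed conjugation-invariant set of non-trivial amenable subgroups, and any such set contains a minimal one; so the theorem is equivalent to: $\g$ is $\Cst$-simple if and only if $\g$ admits no non-trivial amenable URS.

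For this reformulation I would invoke the Kalantar--Kennedy / Breuillard--Kalantar--Kennedy--Ozawa theorem (\cite{kk} and its sequel): $\g$ is $\Cst$-simple if and only if the action of $\g$ on its Furstenberg boundary $\partial_F\g$ is topologically free. I also use the standard fact that a subgroup of $\g$ fixes a point of $\partial_F\g$ precisely when it is amenable, so that in particular every point stabilizer $\g_x$, $x\in\partial_F\g$, is amenable. Suppose $\g$ is not $\Cst$-simple; then $\mathrm{Fix}(t)$ has non-empty interior for some $t\neq e$. By the Glasner--Weiss construction, the set $X_0$ of continuity points of $x\mapsto\g_x$ is a dense $G_\delta$ and $\mathcal{H}_0:=\overline{\{\g_x : x\in X_0\}}$ is a URS; as $X_0$ meets the open set on which $t$ acts trivially, $\mathcal{H}_0$ contains a non-trivial subgroup, hence by minimality contains no copy of $\{e\}$ and consists of non-trivial subgroups. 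Since every element of $\overline{\{\g_x : x\in\partial_F\g\}}$ is a subgroup of some $\g_x$, it is amenable, so $\mathcal{H}_0$ is a non-trivial amenable URS.

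Conversely, let $\mathcal{H}$ be a non-trivial amenable URS and fix $H_0\in\mathcal{H}$ with $H_0\neq\{e\}$. Using a left-invariant mean on $H_0$, form the $\g$-equivariant conditional expectation $E\colon\ell^\infty(\g)\to\ell^\infty(\g/H_0)$ by averaging along $H_0$-cosets; composing with the canonical $\g$-equivariant embedding $C(\partial_F\g)\hookrightarrow\ell^\infty(\g)$ (valid since $\partial_F\g$ is a $\g$-equivariant quotient of $\beta\g$) yields a $\g$-equivariant unital completely positive map, hence dually a $\g$-map $\Psi\colon\beta(\g/H_0)\to\mathrm{Prob}(\partial_F\g)$. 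Put $\mu_0:=\Psi(eH_0)$; it is $H_0$-invariant. As $\partial_F\g$ is minimal and strongly proximal, there is a net $(g_i)$ with $(g_i)_*\mu_0\to\delta_x$ for some $x\in\partial_F\g$, and after passing to a subnet we may assume $g_iH_0g_i^{-1}\to H_\infty$ in $\mathrm{Sub}(\g)$; then $H_\infty\in\overline{\g\cdot H_0}=\mathcal{H}$, so $H_\infty\neq\{e\}$. For each $t\in H_\infty$, Chabauty convergence gives $t\in g_iH_0g_i^{-1}$ for all large $i$, so $t$ fixes $(g_i)_*\mu_0$ for all large $i$; passing to the limit, $t\cdot\delta_x=\delta_x$, i.e.\ $t\in\g_x$. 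Thus $\{e\}\neq H_\infty\le\g_x$, the action on $\partial_F\g$ is not (topologically) free, and $\g$ is not $\Cst$-simple.

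The crux is the last paragraph: a single amenable subgroup $H_0$ yields only an \emph{invariant measure} $\mu_0$ on $\partial_F\g$, which in general has no invariant point, so one cannot conclude directly. It is precisely the uniform recurrence of $\mathcal{H}$ — forcing the conjugates $g_iH_0g_i^{-1}$ to converge in $\mathrm{Sub}(\g)$ to a \emph{non-trivial} limit $H_\infty$ — that, combined with strong proximality, upgrades this invariant measure to a genuine point stabilizer. The remaining ingredients (the Glasner--Weiss stabilizer URS and the amenability of point stabilizers of $\partial_F\g$) are standard; a self-contained treatment would establish the latter using the rigidity of $C(\partial_F\g)$ as a $\g$-injective operator system.
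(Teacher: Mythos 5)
Your argument is essentially correct, but it takes a genuinely different route from this paper, which in fact gives no proof of the statement: it is quoted from Kennedy, and the paper instead recovers it as the one-point case of Theorem \ref{corsim} (take $X=\{\ast\}$, so that $\cross{X}=\Cstr\g$ and $\g_x=\g$). The paper's route to that theorem is self-contained on the $\Cst$-side: the intersection property is identified with freeness of the Hamana boundary $\bX$ (Theorem \ref{thmbdy}, via $\g$-injectivity and rigidity), then with the condition on $\cS_a(X,\g)$ (Theorem \ref{thmmain}, via the multiplicative-domain and fiber-measure argument), with no appeal to the Kalantar--Kennedy/BKKO characterization. You instead take as black boxes the BKKO theorem ($\Cst$-simplicity iff the Furstenberg boundary action is free) and amenability of boundary stabilizers, reduce the stated condition to the absence of a non-trivial amenable URS, produce such a URS from non-freeness via the Glasner--Weiss stability system, and, conversely, convert a non-trivial amenable URS into a non-trivial stabilizer by pushing an invariant measure to a point mass by strong proximality and passing to a Chabauty limit of conjugates. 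That last mechanism is precisely the one the paper uses in Theorem \ref{thmprox} and Lemma \ref{lemprox} (and it is close to Kennedy's original argument), so your proof is nearer to the literature being generalized, while the paper's approach buys the more general crossed-product statement and independence from BKKO.

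Two caveats. First, the ``standard fact'' you quote --- that a subgroup fixes a point of $\partial_F\g$ precisely when it is amenable --- is false in the ``if'' direction: in a $\Cst$-simple group the boundary action is free, so no non-trivial amenable subgroup fixes a point (your own final paragraph in effect says this). You only use the true direction (point stabilizers are amenable), so nothing breaks, but the sentence should be corrected. Second, Glasner--Weiss is usually stated for metrizable systems, and $\partial_F\g$ is not metrizable; the dense $G_\delta$ of continuity points still exists because $\g$ is countable (Baire applied to $\bigcup_{t}\partial\,\mathrm{Fix}(t)$, as in Lemma \ref{lemsta}), and for the boundary you can bypass the issue altogether: each $\mathrm{Fix}(t)$ is clopen (Proposition \ref{propbdy}), so $x\mapsto\g_x$ is continuous and its image is a continuous equivariant image of a minimal system, hence already a URS.
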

The set $\mathrm{Sub}(\g)$ of all subgroups of $\g$ admits a natural topology, called \emph{Chabauty topology}. We treat $\mathrm{Sub}(\g)$ as a compact $\g$-space with this topology and the $\g$-action by conjugation (see Definition \ref{defcha}).

The first main result of this paper is the characterization of the intersection property by a property for stabilizer subgroups,
which is motivated by the above results Theorem \ref{thmkt} and \ref{thmken}.
\begin{thm}\label{thm1}
Let $X$ be a compact $\g$-space. The following are equivalent.
\begin{enumerate}
    \item Every $\g$-invariant closed set in $X$ has the intersection property.
    \item For every point $x$ in $X$ and every amenable subgroup $\Lambda$ in $\g_x$,
            there is a net $(g_i)$ in $\g$ such that $(g_i x)$ converges to $x$ and 
            $(g_i \Lambda g^{-1}_i)$ converges to $\{e\}$ in the Chabauty topology.
\end{enumerate}
\end{thm}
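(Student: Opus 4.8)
The plan is to run the Kalantar--Kennedy--Kennedy circle of ideas (boundaries, injective envelopes, Powers-type averaging) in the setting relative to the coefficient algebra $C(X)$, and then to transport the resulting ``boundary'' condition back to the Chabauty bundle $X\times\mathrm{Sub}(\g)$. The first move is a reformulation of (ii): a net $(g_i)$ as in (ii) exists for a pair $(x,\Lambda)$ exactly when $(x,\{e\})$ lies in the closure of the $\g$-orbit of $(x,\Lambda)$ inside the compact $\g$-space $X\times\mathrm{Sub}(\g)$, the action being diagonal with conjugation on the second coordinate. I would record two elementary facts used repeatedly: a Chabauty limit of amenable subgroups of $\g$ is amenable, and if $g_ix\to x$ and $g_i\Lambda g_i^{-1}\to H$ then $H\le\g_x$. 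Hence the fibre over $x$ of $\overline{\g\cdot(x,\Lambda)}$ is a conjugation-invariant compact family of amenable subgroups of $\g_x$, and passing to a minimal subsystem produces a uniformly recurrent subgroup of $\g$; condition (ii) then says precisely that every amenable uniformly recurrent subgroup arising this way over a point of $X$ is trivial.

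Next I would set up the boundary criterion. Fix a $\g$-invariant closed $Y\subseteq X$ and let $C(\tilde Y)$ be the $\g$-injective envelope of $C(Y)$ in Hamana's sense, so $\tilde Y$ is a compact $\g$-space with a $\g$-equivariant surjection $\pi\colon\tilde Y\to Y$ (the Furstenberg boundary of $Y$ relative to $\g$). By $\g$-injectivity the inclusion $C(Y)\hookrightarrow C(\tilde Y)$ extends to a $\g$-equivariant unital completely positive map $\psi\colon C(Y)\rtimes_r\g\to C(\tilde Y)$ — the canonical pseudo-expectation — which is unique by rigidity of the injective envelope over $C(Y)$. The key equivalence to establish is: $Y$ has the intersection property if and only if $\psi$ is faithful. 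The implication ``$\Leftarrow$'' is immediate, since any ideal $I$ with $I\cap C(Y)=0$ is annihilated by $\psi$; for ``$\Rightarrow$'' one argues that if $\psi$ kills some $a^{\ast}a\ne 0$, then, using the canonical (faithful) conditional expectation $E\colon C(Y)\rtimes_r\g\to C(Y)$ and a standard local manipulation, the ideal generated by $a^{\ast}a$ has zero intersection with $C(Y)$.

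For $(ii)\Rightarrow(i)$ I would prove faithfulness of $\psi$ by an averaging argument in the spirit of Powers and Kennedy. Approximating $a$ by a finitely supported $\sum_{t\in F}a_t\lambda_t$ and evaluating $\psi(a)$ at a boundary point $\tilde y\in\tilde Y$, it suffices to drive the off-diagonal contribution to $0$ for $t$ outside the stabilizer $\g_{\tilde y}$, which is an amenable subgroup of $\g_{\pi(\tilde y)}$ (the relative analogue of ``stabilizers of $\partial_F\g$ are amenable''). Applying the reformulation of (ii) to $(\pi(\tilde y),\g_{\tilde y})$ gives a net $g_i$ with $g_i\pi(\tilde y)\to\pi(\tilde y)$ and $g_i\g_{\tilde y}g_i^{-1}\to\{e\}$; continuity of the $\g$-action keeps $g_i\cdot a_t$ close to $a_t$, while the Chabauty convergence forces the conjugates $g_itg_i^{-1}$ eventually to leave $\g_{\tilde y}$ and to spread out enough that a paradoxical/F\o{}lner averaging of the unitaries $\lambda_{g_itg_i^{-1}}$ kills the off-diagonal part — amenability of $\g_{\tilde y}$ being exactly what lets this averaging go through in the absence of a free subgroup. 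One concludes that $\psi(a)$ dominates $E(a)$ up to arbitrarily small error, so $\psi$ is faithful and $Y$ has the intersection property; since $Y$ was arbitrary, (i) follows. Conversely, for $(i)\Rightarrow(ii)$ one argues by contraposition: if (ii) fails, the reformulation yields $x_0$ and a nontrivial amenable $\Lambda\le\g_{x_0}$ with $(x_0,\{e\})\notin\overline{\g\cdot(x_0,\Lambda)}$; put $Y=\overline{\g x_0}$. Since $\Lambda$ is amenable it fixes a point $\tilde y$ in the fibre $\pi^{-1}(x_0)\subseteq\tilde Y$ (that fibre carries a $\g_{x_0}$-boundary structure and $\Lambda$ has an invariant mean), so $\Lambda\le\g_{\tilde y}$; the stability of $\Lambda$ guarantees that the stabilizer uniformly recurrent subgroup of a minimal subsystem of $\overline{\g\tilde y}$ is nontrivial, and then the continuity-point description of uniformly recurrent subgroups gives $\mathrm{Fix}_{\tilde Y}(t)$ nonempty interior for some $t\ne e$, which produces a nonzero element annihilated by $\psi$. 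Thus $\psi$ is not faithful and, by the boundary criterion, the invariant closed set $Y$ fails the intersection property.

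I expect the main obstacle to be the averaging step in $(ii)\Rightarrow(i)$. In Kennedy's $\Cst$-simplicity theorem the ambient space is a single minimal boundary, whereas here the averaging must be performed uniformly over the points of an arbitrary, typically non-minimal, system $Y$ and over the finitely many translates in $F$, matching each ``bad'' pair (group element, boundary point) with an amenable subgroup of a stabilizer and with a net coming from (ii), and then assembling the local estimates into a single norm-convex average. Closely related, and also delicate, is the precise dictionary — implicit in the first two steps — between stabilizer uniformly recurrent subgroups of the relative boundary $\tilde Y$ and Chabauty-closures in $X\times\mathrm{Sub}(\g)$, which is what makes ``$\psi$ is faithful'' and condition (ii) literally the same assertion; the amenability hypothesis on $\Lambda$ enters in both directions exactly because it is the class of subgroups that can appear as stabilizers of the Furstenberg boundary and, simultaneously, the class for which the relevant averaging can fail and must be rescued by the Chabauty recurrence.
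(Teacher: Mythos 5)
Your opening reformulation of (ii) (that a suitable net exists iff $(x,\{e\})\in\overline{\g\cdot(x,\Lambda)}$ in $X\times\mathrm{Sub}(\g)$) agrees with the paper, but both halves of your argument rest on claims that are false as stated. The ``boundary criterion'' fails in both of its ingredients. The $\g$-equivariant u.c.p.\ extension $\psi\colon \cross{Y}\to C(\tilde Y)$ of the inclusion is \emph{not} unique by rigidity: rigidity only controls $\g$-morphisms from $C(\tilde Y)$ to itself fixing $C(Y)$, and uniqueness of such extensions of the crossed product is precisely what the paper shows to be \emph{equivalent} to the intersection property (Theorem \ref{thmcond}), so assuming it is circular. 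Moreover ``$Y$ has the intersection property iff $\psi$ is faithful'' is wrong, and the direction you call immediate is exactly the false one: for $\g=\mathbb{Z}$ acting on a point, $\tilde Y$ is a point, the canonical trace on $\Cstr\mathbb{Z}\cong C(\mathbb{T})$ is a faithful equivariant pseudo-expectation, yet the intersection property (here, simplicity of $C(\mathbb{T})$) fails; concretely, the ideal $I=\{f:f(1)=0\}$ satisfies $I\cap\mathbb{C}1=0$ but is not annihilated by the trace. An ideal with trivial intersection with $C(Y)$ is killed by \emph{some} pseudo-expectation (built through the quotient), not by a prescribed one. Finally, the Powers-type averaging you propose for $(ii)\Rightarrow(i)$ is only a sketch, and the paper needs no averaging for this theorem: it constructs the $\g$-morphism $\theta\colon\cross{X}\to C(Y)$, $\theta(f\lambda_t)(x,\Lambda)=f(x)$ for $t\in\Lambda$ and $0$ otherwise, on a closed invariant $Y\subseteq\cS_a(X,\g)$ with $p_X(Y)=X$, pushes it into $C(\bX)$ by injectivity, and contradicts the uniqueness criterion via a measure-support argument (Theorem \ref{thmmain}).

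Your contrapositive for $(i)\Rightarrow(ii)$ has a fatal flaw: an amenable subgroup $\Lambda\le\g_{x_0}$ does \emph{not} in general fix a point of the fibre over $x_0$ in $\tilde Y$; amenability only provides a $\Lambda$-invariant probability measure on that fibre, which is exactly what the paper uses in Lemma \ref{lemprox}. If your fixed-point claim were true, your argument would show that the mere existence of a nontrivial amenable subgroup inside some stabilizer destroys the intersection property, since nowhere do you use the actual content of the failure of (ii), namely the absence of a net with $g_ix_0\to x_0$ and $g_i\Lambda g_i^{-1}\to\{e\}$. This proves too much: take $X$ a point and $\g=F_2$, which is $\Cst$-simple although $\mathbb{Z}\le\g=\g_x$; indeed $\mathbb{Z}$ fixes no point of $\partial_H F_2$ because that action is free. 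The paper's route for this direction is different: assuming (i), it applies Theorem \ref{thmmain} to $Z=p_X\bigl(\overline{\g(x,\Lambda)}\bigr)$ to force $Z\times\{e\}\subseteq\overline{\g(x,\Lambda)}$, which yields the desired net; and the hard half of Theorem \ref{thmmain} itself uses the Stonean/amenable-stabilizer/minimality properties of $\bX$ (Proposition \ref{propbdy}, Theorem \ref{thmbdy}) rather than any fixed-point property of amenable subgroups.
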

If $X$ is minimal, the simplicity of $\cross{X}$ is characterized by purely algebraic conditions for the stabilizer subgroups of $X$, as follows.
\begin{thm}
Let $X$ be a minimal compact $\g$-space. The following are equivalent.
\begin{enumerate}
    \item The reduced crossed product $\cross{X}$ is simple.
    \item For every point $x$ in $X$ and every amenable subgroup $\Lambda$ in $\g_x$,
            there is a sequence $(g_i)$ in $\g$ such that
            $(g_i \Lambda g^{-1}_i)$ converges to $\{e\}$ in the Chabauty topology.
    \item There is a point $x$ in $X$ such that for every amenable subgroup $\Lambda$ in  
            $\g_x$, there is a sequence $(g_i)$ in $\g$ such that
            $(g_i \Lambda g^{-1}_i)$ converges to $\{e\}$ in the Chabauty topology.
\end{enumerate}
\end{thm}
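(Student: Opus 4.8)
The plan is to reduce the whole statement to Theorem~\ref{thm1} and then trade the ``net together with the condition $g_ix\to x$'' appearing there for the plain sequences appearing in (ii)--(iii), using compactness and minimality. First I would dispose of the algebraic reduction of (i). Since $X$ is minimal, the only $\g$-invariant open subsets of $X$ are $\emptyset$ and $X$, so the only $\g$-invariant (closed) ideals of $C(X)$ are $\{0\}$ and $C(X)$. Hence for any ideal $I$ of $\cross{X}$ the intersection $I\cap C(X)$ is a $\g$-invariant ideal of $C(X)$, so it is $\{0\}$ or $C(X)$; in the latter case $1\in I$ and $I=\cross{X}$. Therefore $\cross{X}$ is simple if and only if $X$ has the intersection property; and since the only $\g$-invariant closed subsets of $X$ are $\emptyset$ and $X$ (and $C(\emptyset)\rtimes_r\g=0$ has the intersection property trivially), this is exactly the hypothesis of Theorem~\ref{thm1}. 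Thus (i) is equivalent to the condition $(\star)$: \emph{for every $x\in X$ and every amenable $\Lambda\le\g_x$ there is a net $(g_i)$ in $\g$ with $g_i x\to x$ and $g_i\Lambda g_i^{-1}\to\{e\}$}. It remains to prove $(\star)\Leftrightarrow(\mathrm{ii})\Leftrightarrow(\mathrm{iii})$.

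The implication $(\star)\Rightarrow(\mathrm{ii})$ is immediate: forget the requirement $g_ix\to x$, and note that $\{e\}$ then lies in the closure of the countable set $\{g\Lambda g^{-1}:g\in\g\}$ in the metrizable space $\mathrm{Sub}(\g)$, hence is the limit of a sequence from it; and $(\mathrm{ii})\Rightarrow(\mathrm{iii})$ is trivial. The substantial step is $(\mathrm{ii})\Rightarrow(\star)$. Fix $x$, an amenable $\Lambda\le\g_x$, and a sequence $(h_n)$ with $h_n\Lambda h_n^{-1}\to\{e\}$. I would show that for every open $V\ni x$ and every finite $F\subseteq\g\setminus\{e\}$ there is $g\in\g$ with $gx\in V$ and $g\Lambda g^{-1}\cap F=\emptyset$; these elements assemble (over the directed set of such pairs $(V,F)$) into the net required by $(\star)$. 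By minimality $\{t^{-1}V:t\in\g\}$ is an open cover of $X$, so by compactness a finite subfamily indexed by some finite $E\subseteq\g$ already covers $X$. Put $F'=\bigcup_{t\in E}t^{-1}Ft$, a finite subset of $\g\setminus\{e\}$. Choose $n$ with $h_n\Lambda h_n^{-1}\cap F'=\emptyset$ and then $t\in E$ with $t h_n x\in V$; the element $g=th_n$ works, since $g\Lambda g^{-1}\cap F=\emptyset$ is equivalent to $h_n\Lambda h_n^{-1}\cap t^{-1}Ft=\emptyset$, which holds as $t^{-1}Ft\subseteq F'$.

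Finally I would prove $(\mathrm{iii})\Rightarrow(\mathrm{i})$ by contradiction. If $\cross{X}$ is not simple then $(\star)$ fails, so there are $x_1\in X$ and an amenable $\Lambda_1\le\g_{x_1}$ admitting no net with $g_ix_1\to x_1$ and $g_i\Lambda_1 g_i^{-1}\to\{e\}$; applying the construction of the previous paragraph to $(x_1,\Lambda_1)$ in contrapositive form, there is then no \emph{sequence} with $g_i\Lambda_1 g_i^{-1}\to\{e\}$, i.e.\ $\{e\}\notin\overline{\g\cdot\Lambda_1}$ in $\mathrm{Sub}(\g)$. Now let $x_0$ be the point furnished by (iii). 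By minimality there is a net $(s_k)$ in $\g$ with $s_k x_1\to x_0$, and by compactness of $\mathrm{Sub}(\g)$ we may pass to a subnet so that $s_k\Lambda_1 s_k^{-1}\to H^{*}$ for some $H^{*}\in\mathrm{Sub}(\g)$. Then $H^{*}$ is amenable, because the amenable subgroups form a closed subset of $\mathrm{Sub}(\g)$ (a finitely generated subgroup of a Chabauty limit of amenable subgroups is eventually contained in those subgroups, and amenability passes to directed unions); $H^{*}\le\g_{x_0}$ by upper semicontinuity of $x\mapsto\g_x$ (if $g\in H^{*}$ then $g s_k x_1=s_k x_1$ eventually, whence $gx_0=x_0$); and $\overline{\g\cdot H^{*}}\subseteq\overline{\g\cdot\Lambda_1}$, so $\{e\}\notin\overline{\g\cdot H^{*}}$. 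Thus $H^{*}$ is an amenable subgroup of $\g_{x_0}$ for which no sequence $(g_i)$ satisfies $g_i H^{*}g_i^{-1}\to\{e\}$, contradicting (iii).

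The main obstacle, and the only place where minimality is genuinely used, is the two-way trade between the net-with-orbit-condition of Theorem~\ref{thm1} and the plain sequences of (ii)--(iii): passing from a sequence to such a net needs the finite-subcover argument, and reducing (iii) from ``there is a point'' down to a single point needs the stability of amenability under Chabauty limits together with upper semicontinuity of the stabilizer map. Everything else is routine manipulation of the Chabauty topology and of ideals.
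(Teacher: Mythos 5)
Your proof is correct, and it takes a recognizably different route from the paper's. The paper does not pass through Theorem~\ref{thm1} at all: it deduces the statement directly from Theorem~\ref{thmmain} by showing that failure of simplicity is equivalent to the existence of a \emph{minimal} $\g$-invariant subset $Y \subsetneq \cS_a(X,\g)$ with $Y \neq X \times \{e\}$, and that such a $Y$ exists if and only if some $(x,\Lambda) \in \cS_a(X,\g)$ has $\{e\} \notin \overline{\mathrm{Ad}(\g)\Lambda}$ (the key step being: if $g_i\Lambda g_i^{-1}\to\{e\}$, pass to a subnet with $g_i x \to y$, so $(y,\{e\})$ lies in the orbit closure, forcing $Y = X\times\{e\}$ by minimality of $Y$ and of $X$). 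Your argument instead quotes Theorem~\ref{thm1} and then does the sequence-versus-net bookkeeping by hand: the finite-subcover trick ($X=\bigcup_{t\in E}t^{-1}V$, conjugate $F$ by $E$) to re-attach the orbit condition $g_ix\to x$ to any contracting sequence of conjugates, and, for (iii)$\Rightarrow$(i), a Chabauty-limit argument transporting a ``bad'' pair $(x_1,\Lambda_1)$ to the distinguished point $x_0$. Both mechanisms exploit exactly the same two resources (minimality of $X$ and compactness of $\mathrm{Sub}(\g)$), so the proofs are close in spirit, but your decomposition has the merit of making explicit two facts the paper uses only implicitly -- that the set of amenable subgroups is Chabauty-closed (hidden in the assertion that $\cS_a(X,\g)$ is closed) and that stabilizers are upper semicontinuous -- as well as the metrizability point needed to replace nets by sequences in (ii) and (iii). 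The paper's route is shorter once Theorem~\ref{thmmain} is in hand and has the added benefit that the object it produces (a minimal invariant subset of $\cS_a(X,\g)$ other than $X\times\{e\}$) feeds directly into the subsequent URS corollary, which your argument does not yield as a by-product.
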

To prove these results, the \emph{equivariant injective envelope} $C(\bX)$ of $C(X)$ plays a central role. The $\g$-space $\bX$ has some properties analogous to those of the \emph{Hamana boundary} (or \emph{universal Furstenberg boundary}, \cite[\S3]{kk}).

The simplicity of reduced crossed products is also characterized in terms of \emph{uniformly recurrent subgroups} (\emph{URS} in short) as with the $\Cst$-simplicity of countable discrete groups \cite{kennedy}.
The notion of URS's is introduced by Glasner--Weiss \cite{gw} as a topological dynamical analogue of the notion of invariant random subgroups, which is an ergodic theoritic concept.
A URS of $\g$ is defined as a minimal component of the $\g$-space $\mathrm{Sub}(\g)$.
The set of all URS's of $\g$ has a natural partial order (denoted by $\preccurlyeq$), introduced by Le Boudec--Matte Bon \cite[\S2.4]{lm}.

The second main result of this paper is a property for amenable URS's from the aspect of its order structure.

\begin{thm} \label{thm2}
Let $X$ be a compact $\g$-space. Suppose that $\cS_X$ is a URS ($X$ is not necessarily  minimal).
Then $\cS_{\bX}$ contains a unique URS $\cA_X$. Moreover, $\cA_X$ is the largest amenable URS dominated by $\cS_X$. Namely, for every amenable URS $\cU$ such that $\cU \preccurlyeq \cS_X$, we have $\cU \preccurlyeq \cA_X$.
\end{thm}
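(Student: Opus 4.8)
Throughout write $\cS_X=\overline{\{\g_x:x\in X\}}$ and $\cS_{\bX}=\overline{\{\g_y:y\in\bX\}}$ for the corresponding closed $\g$-invariant subsets of $\mathrm{Sub}(\g)$, and $\pi:\bX\to X$ for the canonical factor map. The plan is to exploit the boundary-type features of $\bX$: its point stabilizers are amenable, the extension $C(X)\subseteq C(\bX)$ is $\g$-rigid, and $C(\bX)$ is $\g$-injective — hence $\Lambda$-injective for every subgroup $\Lambda\le\g$ — so that $\bX\to X$ is relatively strongly proximal. Since a Chabauty limit of amenable subgroups is again amenable (a finitely generated subgroup of $H=\lim H_i$ eventually lies in the $H_i$, and amenability of a group is detected by its finitely generated subgroups), every member of $\cS_{\bX}$ is amenable; as $\cS_{\bX}$ is non-empty, closed and $\g$-invariant it contains a URS, and every URS it contains is amenable. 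It therefore remains to prove (i) that $\cS_{\bX}$ contains only one URS, call it $\cA_X$, and (ii) that $\cU\preccurlyeq\cA_X$ for every amenable URS $\cU\preccurlyeq\cS_X$. Granting these, $\cA_X\preccurlyeq\cS_X$ holds because its members are Chabauty limits of subgroups $\g_y\le\g_{\pi(y)}$, so (i), (ii) together make $\cA_X$ the largest amenable URS dominated by $\cS_X$, which is the assertion of the theorem.

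For (i) I would reduce to the minimal case. Because $\cS_X$ is a URS, every minimal subset $X_0\subseteq X$ already satisfies $\cS_{X_0}=\cS_X$ (the set $\cS_{X_0}$ is non-empty, closed, $\g$-invariant and contained in the minimal set $\cS_X$). A minimal subset $Y\subseteq\bX$ is in particular a minimal $\g$-space, so by Glasner--Weiss \cite{gw} the set $\overline{\{\g_y:y\in Y\}}$ contains a unique URS $\cS_Y$, and $\cS_Y\subseteq\cS_{\bX}$. The delicate part is to check that $\cS_Y$ is independent of the choice of $Y$ and that it is the only URS in $\cS_{\bX}$: here one uses that the various minimal subsets $Y\subseteq\bX$ map onto minimal subsets of $X$ all carrying the common stabilizer system $\cS_X$, so that by rigidity and universality of the equivariant injective envelope these $Y$'s share the relevant stabilizer data, and that every URS contained in $\cS_{\bX}$ is captured, through the conjugation action, by one coming from such a $Y$. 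Set $\cA_X:=\cS_Y$; this is bookkeeping with the injective envelope, not the main difficulty.

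The substantive part is (ii). Let $\cU$ be an amenable URS with $\cU\preccurlyeq\cS_X$ and pick $H\in\cU$; then $H\le K$ for some $K\in\cS_X$, and writing $K=\lim\g_{x_i}$ and passing to a subnet with $x_i\to x$ we get $K\le\g_x$ (if $g\in K$ then $gx_i=x_i$ eventually, hence $gx=x$), so $H$ is an amenable subgroup of $\g_x$. Amenability of $H$ makes $\IC$ an $H$-injective operator system, so the $H$-equivariant unital map $\mathrm{ev}_x:C(X)\to\IC,\ f\mapsto f(x)$, extends along the $H$-embedding $C(X)\hookrightarrow C(\bX)$ to an $H$-invariant state on $C(\bX)$, that is, to an $H$-invariant probability measure $\mu$ on the fibre $\pi^{-1}(x)$. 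By relative strong proximality of $\bX\to X$ the weak$^*$-closure of $\g\mu$ contains a point mass $\delta_y$ with $y\in\bX$; writing $\delta_y=\lim g_j\mu$ and passing to a Chabauty-convergent subnet of $(g_jHg_j^{-1})$, whose limit $H'$ lies in $\cU$ since $\cU$ is closed and conjugation-invariant, one gets $g\delta_y=\delta_y$ for every $g\in H'$, i.e.\ $H'\le\g_y$ with $\g_y\in\cS_{\bX}$. Now $\overline{\g\g_y}$ is a closed $\g$-invariant subset of $\cS_{\bX}$, hence contains $\cA_X$ by (i); choosing $(h_k)$ with $h_k\g_yh_k^{-1}\to L\in\cA_X$ and a Chabauty limit $H''$ of $(h_kH'h_k^{-1})\subseteq\cU$ we obtain $H''\in\cU$ with $H''\le L\in\cA_X$. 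Finally, minimality of $\cU$ propagates this: each member of $\cU$ is a Chabauty limit of conjugates of $H''$ and is therefore dominated by the corresponding limit of conjugates of $L$, which lies in $\cA_X$; this is exactly $\cU\preccurlyeq\cA_X$. I expect the main obstacle to be the measure-to-point step — making precise the relative strong proximality of $\bX\to X$ and extracting from the $H$-invariant fibre measure $\mu$ an honest point $y\in\bX$ with $H'\le\g_y$ — since this is the one place where the boundary nature of the equivariant injective envelope, and not merely its formal injectivity, is used; reconciling this with the uniqueness in (i) across the possibly many minimal subsets of $X$ is the remaining delicate point.
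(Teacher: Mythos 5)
Your overall architecture for the domination statement is the same as the paper's: use amenability of $H\le\g_x$ together with injectivity to produce an $H$-invariant measure supported on the fibre $q^{-1}(x)\subseteq\bX$, push it to a point mass by a boundary-type proximality property of $\bX$ over $X$, and pass to a Chabauty limit of conjugates to land an element of $\cU$ below a stabilizer $\g_y$, $y\in\bX$. But the step you yourself flag as the main obstacle is a genuine gap, and moreover it is invoked in a form that is too weak to be true here. You ask for a point mass in the weak$^*$-closure of the orbit $\g\mu$ of a \emph{single} fibre measure over a \emph{single} point $x$; since $X$ is not assumed minimal, $\g x$ need not be dense, and nothing about $\g$-injectivity or rigidity of $C(X)\subseteq C(\bX)$ controls the subsystem $\overline{\g x}$ (the preimage $q^{-1}(\overline{\g x})$ is not the injective envelope of $C(\overline{\g x})$). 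What the paper proves (Theorem \ref{thmprox}) is that point masses lie in the closure of $\{t\mu_z : t\in\g,\ z\in Z\}$ for a \emph{family} of fibre measures indexed by a set $Z$ with $\g Z$ dense, and its proof is not formal: it needs the injectivity/rigidity argument plus Milman's partial converse. The missing reduction is exactly where the hypothesis that $\cS_X$ is a URS enters: it guarantees that \emph{every} $x\in X$ admits some $H_x\in\cU$ with $H_x\le\g_x$, hence an $H_x$-invariant measure on every fibre, and then the family version with $Z=X$ applies (this is the paper's Lemma \ref{lemprox}). As written, your argument neither proves the proximality ingredient nor applies it in a valid form.

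The second gap is part (i). Your reduction to minimal subsets $Y\subseteq\bX$ and the appeal to ``rigidity and universality'' is not an argument: a URS contained in $\cS_{\bX}$ consists of Chabauty limits of stabilizers, not necessarily of stabilizers of points in one minimal subset, and you give no reason why every such URS equals $\cS_Y$, nor why $\cS_Y$ is independent of $Y$; this is precisely the content of uniqueness, not bookkeeping. (Incidentally $\cS_{X_0}=\cS_X$ for minimal $X_0\subseteq X$ is also not obvious, since interior stabilizers relative to $X_0$ and to $X$ differ.) Since your proof of (ii) uses (i) at the step ``$\overline{\g\,\g_y}$ contains $\cA_X$,'' the gap propagates. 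The paper avoids all of this by running the order argument in the other direction: once one knows that every amenable URS $\cV\preccurlyeq\cS_X$ satisfies $\cV\preccurlyeq\cV'$ for every URS $\cV'\subseteq\cS_{\bX}$ (via Lemma \ref{lemprox}), one applies this to the URSs contained in $\cS_{\bX}$ themselves --- they are amenable because stabilizers in $\bX$ are amenable, and they are dominated by $\cS_X$ --- so any two URSs in $\cS_{\bX}$ dominate each other and hence coincide. Reorganizing your proof along these lines, and proving (rather than asserting) the relative proximality theorem in its family form, would close the gaps.
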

The notation $\cS_X$ denotes the closed $\g$-invariant subspace of $\mathrm{Sub}(\g)$ arising from stabilizer subgroups of $X$, called the \emph{stability system} of $X$ (see \cite[\S1]{gw} or Definition \ref{defstb}). 
If $X$ is minimal, the space $\cS_X$ is a URS. 
On the other hand, every URS is a stability system of a transitive $\g$-space,
but it is not known whether every URS is a stability system of a minimal $\g$-space. 
Using the above result, we prove that it is true for amenable URS's.  

In this paper, we also study the ideals in the group $\Cst$-algebra of $\g$.
In particular, we see the relationship between amenable URS's of $\g$ and the ideals of $\Cstr \g$.  
For an amenable subgroup $\Lambda$ of $\g$, we have the continuous $*$-representation $\pi_{\Lambda}$ of $\Cstr \g$ on the Hilbert space $\ell_2(\g/\Lambda)$ extending the canonical action of $\g$ on the coset space $\g/\Lambda$. 
We show that for stabilizer subgroup $\Lambda$ of the Hamana boundary, the ideal $\mathrm{ker}(\pi_{\Lambda})$ is maximal.

In Section \ref{pre} we recall the notion of stabilizer subgroups and study its relationship to the intersection property.
In Section \ref{inj} we recall the $\g$-injective envelope and show some  properties from the viewpoint of operator algebras which are analogous to those of the Hamana boundary.
In Section \ref{morph} we prove a technical result to prove the main result Theorem \ref{thm1} and we prove it in Section \ref{main}.
In Section \ref{min} we establish the characterization of simplicity of reduced products.
In Section \ref{str} we show a property for the $\g$-injective envelope from the viewpoint of topological dynamical system to prove the main result Theorem \ref{thm2}.
Finally, in Section \ref{maxid} and \ref{ursid} we study the ideals arising from amenable URS's.
\subsection*{Acknowledgements}
The author would like to thank his supervisor, Professor Narutaka Ozawa for his support and many valuable comments. 
\section{Preliminaries} \label{pre}
\begin{defn}
For a compact $\g$-space $X$ and a point $x$ in $X$,
we denote by ${\g}_x$ the \emph{stabilizer subgroup}, 
i.e.\ $ {\g}_x = \{ t \in \g : tx = x \} $.
Let ${\g}_x^\circ$ denote the subgroup consisting the elements in $\g $ 
which act as identity on a neighborhood of $x$.
We say that a compact $\g$-space is \emph{topologically free} if $\g_x^{\circ} = \{ e\}$ for every $x \in X$.
Note that a $\g$-space $X$ is topologically free if and only if $\mathrm{Fix}(t)^{\circ} = \emptyset$ for every $t \in \g \setminus \{e\}$, where $\mathrm{Fix}(t)$ denotes the fixed point set in $X$ of the homeomorphism $t$.
\end{defn}

Let $X$ be a compact $\g$-space. There is a canonical conditional expectation 
$E_X$ from $\cross{X}$ to $C(X)$ defined by 
\[
E_X(f  {\lambda}_t) = 
\begin{cases}
f & t = e  \\
0 & t  \not= e
\end{cases}
\]
and extended by linearity.
Note that $E_X$ is faithful (see \cite[Chapter4.1]{bo}).
For every $x$ in $X$, we define a conditional expectation $E_x$ from $\cross{X}$ to ${\Cstr} ({\g}_x )$ by 
\[
E_x(f {\lambda}_t) = f(x)E_{{\g}_x}({\lambda}_t)
\]
where $E_{{\g}_x}$ is the canonical conditional expectation from
${\Cstr} \g$ to ${\Cstr} ({\g}_x)$ (given by $E_{\g_x} (\lambda_t) = \lambda_t$ if  $t \in \g_x$ and $E_{\g_x} (\lambda_t) = 0$ if $t \in \g \setminus \g_x$, see \cite[Corollary 2.5.12]{bo}).

In this paper, we often use the following fact about unital completely positive maps. See \cite[Proposition 1.5.7]{bo} for proof.
\begin{defn}
Let $A$ and $B$ be unital $\Cst$-algebras and  $\phi$ be a unital completely positive map. The \emph{multiplicative domain} of $\phi$ is the subspace $\mathrm{mult}(\phi)$ of $A$ defined by
\[
\mathrm{mult} (\phi) 
= \{ a \in A : \phi(a^{*} a) = \phi (a)^{*} \phi (a) \ and \ \phi(a a^{*}) = \phi (a) \phi (a)^{*} \}.
\]
\end{defn}
\begin{prop}
Let $A$ and $B$ be unital $\Cst$-algebras and  $\phi$ be a unital completely positive map.
Then, for every $a \in \mathrm{mult}(\phi)$ and $b \in A$, one has $\phi(ab) = \phi(a) \phi(b)$ and $\phi(ba) = \phi(b) \phi(a)$.
In particular, $\mathrm{mult}(\phi)$ is the largest $\Cst$-subalgebra of $A$ to which the restriction of $\phi$ is multiplicative.
\end{prop}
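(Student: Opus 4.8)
The plan is to derive everything from Stinespring's dilation theorem. Represent $B$ faithfully on a Hilbert space $\cH_0$ and apply the Stinespring dilation to the unital completely positive map $\phi \colon A \to \IB(\cH_0)$: this produces a Hilbert space $\cH$, a unital $*$-homomorphism $\pi \colon A \to \IB(\cH)$, and an isometry $V \colon \cH_0 \to \cH$ with $V^{*}V = 1$, such that $\phi(a) = V^{*}\pi(a)V$ for all $a \in A$; then $VV^{*}$ is the orthogonal projection of $\cH$ onto the range of $V$.

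First I would show that if $a \in \mathrm{mult}(\phi)$ then $\pi(a)V = V\phi(a)$ and $V^{*}\pi(a) = \phi(a)V^{*}$. Put $Q = 1 - VV^{*}$, a projection. Since $\pi$ is a homomorphism, $\phi(a^{*}a) = \phi(a)^{*}\phi(a)$ reads
\[
0 = V^{*}\pi(a)^{*}\pi(a)V - V^{*}\pi(a)^{*}VV^{*}\pi(a)V = V^{*}\pi(a)^{*}Q\pi(a)V = (Q\pi(a)V)^{*}(Q\pi(a)V),
\]
so $Q\pi(a)V = 0$, i.e.\ $\pi(a)V = VV^{*}\pi(a)V = V\phi(a)$. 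The defining condition of $\mathrm{mult}(\phi)$ is symmetric under $a \mapsto a^{*}$ (and $\phi(a^{*}) = \phi(a)^{*}$), so $a^{*} \in \mathrm{mult}(\phi)$; applying the identity just proved to $a^{*}$ gives $\pi(a)^{*}V = V\phi(a)^{*}$, and taking adjoints yields $V^{*}\pi(a) = \phi(a)V^{*}$. Consequently, for $a \in \mathrm{mult}(\phi)$ and $b \in A$,
\[
\phi(ab) = V^{*}\pi(a)\pi(b)V = \phi(a)V^{*}\pi(b)V = \phi(a)\phi(b), \qquad \phi(ba) = V^{*}\pi(b)\pi(a)V = V^{*}\pi(b)V\phi(a) = \phi(b)\phi(a),
\]
which is the main assertion.

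For the final statement I would first check that $\mathrm{mult}(\phi)$ is a $\Cst$-subalgebra of $A$: it is norm-closed since $\phi$, multiplication, and involution are continuous; it is self-adjoint by the symmetry noted above; it contains $1$ since $\phi$ is unital; and it is a subalgebra because for $a, a' \in \mathrm{mult}(\phi)$ the multiplicativity just established gives $\phi\big((a+a')^{*}(a+a')\big) = \big(\phi(a)+\phi(a')\big)^{*}\big(\phi(a)+\phi(a')\big)$ and the analogous identity with $(\cdot)(\cdot)^{*}$, and likewise for products and scalar multiples. By the displayed computation, $\phi$ restricts to a multiplicative map on $\mathrm{mult}(\phi)$. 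Conversely, if $D \subseteq A$ is a $\Cst$-subalgebra on which $\phi$ is multiplicative, then for $d \in D$ one has $\phi(d^{*}d) = \phi(d^{*})\phi(d) = \phi(d)^{*}\phi(d)$ and $\phi(dd^{*}) = \phi(d)\phi(d)^{*}$, so $D \subseteq \mathrm{mult}(\phi)$; hence $\mathrm{mult}(\phi)$ is the largest such subalgebra.

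The only subtle point, and the step where a careful proof spends its effort, is the passage from the algebra equality $\phi(a^{*}a) = \phi(a)^{*}\phi(a)$ to the operator equality $\pi(a)V = V\phi(a)$ — this is precisely where positivity of the projection $Q = 1 - VV^{*}$ enters (via the fact that a positive operator with zero "$\langle\,\cdot\,\rangle$" vanishes). Everything else is routine bookkeeping with Stinespring and with continuity of $\phi$.
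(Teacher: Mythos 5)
Your proof is correct: the Stinespring computation $\phi(a^{*}a)-\phi(a)^{*}\phi(a)=(Q\pi(a)V)^{*}(Q\pi(a)V)$ with $Q=1-VV^{*}$ is exactly the standard argument, and you correctly use both defining conditions (the one for $a^{*}$, i.e.\ $\phi(aa^{*})=\phi(a)\phi(a)^{*}$, is what gives $V^{*}\pi(a)=\phi(a)V^{*}$). The paper does not prove this proposition itself but cites Brown--Ozawa, Proposition 1.5.7, whose proof is essentially this same dilation argument, so there is nothing to add.
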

The following is a generalization of the result Theorem \ref{thmkt} and \cite[Theorem 14]{ozawa}.
\begin{lem}\label{lemsta}
Let $X$ be a compact $\g$-space. Then we have the following.
\begin{enumerate}
    \item If the set $\{ x \in X : {\g}_x \ is \ \Cst \mathchar`- simple \}$ is dense in X, then $X$ has the intersection property. 
             In particular, if X is topologically free, then $X$ has the intersection property. 
    \item If  $X$ has the intersection property and ${\g}_x^\circ$ is amenable for every point         
            $x$ in $X$, then $X$ is topologically free. 
\end{enumerate}
\end{lem}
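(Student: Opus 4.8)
Both parts are about forcing nonzero ideals to meet $C(X)$: (i) produces an element of $I\cap C(X)$ from an arbitrary nonzero ideal $I$, while (ii) builds a bad ideal once topological freeness fails. For (i) the plan is to extend the classical argument for topologically free actions, splitting $\g$ into the stabilizer of a good point (handled by $\Cst$-algebra theory) and its complement (handled topologically). Fix $0\le a\in I$ with $\|a\|=1$; faithfulness of $E_X$ gives $\beta:=\|E_X(a)\|>0$, and since $\{x\in X:\g_x$ is $\Cst$-simple$\}$ is dense while $\{x:E_X(a)(x)>\beta-\varepsilon\}$ is open and nonempty, choose $x_0$ in their intersection. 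Approximate $a$ in norm by a self-adjoint finitely supported $b=\sum_{s\in F}g_s\lambda_s$, so that $g_e=E_X(b)$ has $g_e(x_0)>\beta-2\varepsilon$ and $\tau_{x_0}(E_{x_0}(b))=g_e(x_0)$, where $\tau_{x_0}$ is the canonical trace on $\Cstr(\g_{x_0})$. Now use $\Cst$-simplicity of $\g_{x_0}$ through the Powers averaging property to find $t_1,\dots,t_n\in\g_{x_0}$ with $\Theta(E_{x_0}(b))$ within $\varepsilon$ of the scalar $g_e(x_0)1$, where $\Theta(z)=\tfrac1n\sum_i\lambda_{t_i}z\lambda_{t_i}^{\ast}$; the map $\Theta$ is unital, contractive, preserves $I$, and (by $\g_{x_0}$-equivariance of $E_{x_0}$) commutes with $E_{x_0}$, so $E_{x_0}(\Theta(b))$ is within $\varepsilon$ of $g_e(x_0)1$. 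Finally localize: $\Theta(b)$ is still finitely supported, every index $s\notin\g_{x_0}$ has $sx_0\ne x_0$, and a bump function $h\in C(X)_{+}$ with $0\le h\le 1$, $h(x_0)=1$, supported on a small enough neighbourhood $V$ of $x_0$, kills all those $\lambda_s$-components (via $V\cap sV=\emptyset$) and freezes the remaining coefficient functions near $x_0$, giving $\|h\Theta(b)h-g_e(x_0)h^{2}\|<2\varepsilon$. Since $h\Theta(a)h\in I$ is within $\varepsilon$ of $h\Theta(b)h$, the positive element $c:=h\Theta(a)h\in I$ lies within $3\varepsilon$ of $g:=g_e(x_0)h^{2}\in C(X)_{+}$, with $\|g\|>\beta-2\varepsilon$; a Cuntz-comparison step then produces $(g-3\varepsilon)_{+}\in I\cap C(X)$, which is nonzero once $\varepsilon<\beta/5$. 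The ``in particular'' clause follows because, for a topologically free action of a countable group, $\bigcup_{t\ne e}\mathrm{Fix}(t)$ is meagre, so the points with trivial (hence $\Cst$-simple) stabilizer form a dense set.

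For (ii) I argue by contraposition. If $X$ is not topologically free, pick $t\in\g\setminus\{e\}$ with $U:=\mathrm{Fix}(t)^{\circ}\ne\emptyset$ (so $tU=U$) and a nonzero $f\in C(X)_{+}$ supported in $U$; the goal is a representation of $\cross{X}$ that is faithful on $C(X)$ but annihilates $a:=f(\lambda_t-1)$, which is nonzero in $\cross{X}$ as one sees in any faithful representation. For each $x\in X$ let $\rho_x$ be the covariant representation of $(C(X),\g)$ on $\ell^{2}(\g/\g_x^{\circ})$ given by $\rho_x(\lambda_u)\delta_{s\g_x^{\circ}}=\delta_{us\g_x^{\circ}}$ and $\rho_x(f')\delta_{s\g_x^{\circ}}=f'(sx)\delta_{s\g_x^{\circ}}$; this is well defined because $\g_x^{\circ}$ fixes $x$. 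The key point is that $\rho_x(a)\delta_{s\g_x^{\circ}}=0$ for every $s$: if $sx\in U$ then $t\in\g_{sx}^{\circ}=s\g_x^{\circ}s^{-1}$, whence $ts\g_x^{\circ}=s\g_x^{\circ}$ and $t(sx)=sx$; if $sx\notin U$ then $tsx\notin U$ as well (since $t^{-1}U=U$), so both terms vanish because $\mathrm{supp}(f)\subseteq U$. Since $\g_x^{\circ}$ is amenable, $\rho_x$ is the representation induced from the one-dimensional covariant representation $(\mathrm{ev}_x,1_{\g_x^{\circ}})$ of $(C(X),\g_x^{\circ})$, and because $C(X)\rtimes\g_x^{\circ}=C(X)\rtimes_r\g_x^{\circ}$ for the amenable group $\g_x^{\circ}$, induction realizes $\rho_x$ on $\cross{X}$. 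Hence $\Phi:=\bigoplus_{x\in X}\rho_x$ is a representation of $\cross{X}$ with $\Phi(a)=0$, and it is faithful on $C(X)$ because $\Phi(f')=0$ forces $f'$ to vanish on every orbit, so $f'=0$. Then $\ker\Phi$ is a proper nonzero ideal with $\ker\Phi\cap C(X)=0$, contradicting the intersection property; therefore $X$ is topologically free.

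I expect the main obstacle in (i) to be organizing the three operations — norm approximation by a finite sum, inner averaging over $\g_{x_0}$, and localization by a bump function — so that each step preserves positivity and membership in $I$, together with the (nonelementary) appeal to the Powers averaging property of $\Cst$-simple groups and the closing Cuntz-comparison extraction of an honest function from an element of $I$ merely close to $C(X)$. In (ii) the delicate point is the verification that each $\rho_x$ descends to the reduced crossed product, which is exactly where the hypothesis that $\g_x^{\circ}$ is amenable is used.
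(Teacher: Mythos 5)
Your proposal is correct, and part (ii) is essentially the paper's own proof: the paper likewise forms the quasi-regular covariant representations on $\ell_2(\g/\g_x^{\circ})$, takes their direct sum, and notes that it annihilates $f(1-\lambda_t)$ while being faithful on $C(X)$; you are only more explicit than the paper about why each $\rho_x$ factors through the reduced crossed product (induction from $(\mathrm{ev}_x,1_{\g_x^{\circ}})$ together with full $=$ reduced for the amenable group $\g_x^{\circ}$), which is indeed exactly where amenability of $\g_x^{\circ}$ enters. Part (i), however, takes a genuinely different route. The paper argues by contradiction entirely inside $\Cstr(\g_x)$: if $I\neq 0$ and $I\cap C(X)=0$, faithfulness of $E_X$ yields a point $x$ with $\g_x$ $\Cst$-simple and $E_x(I)\neq 0$; since $\Cstr(\g_x)$ lies in the multiplicative domain of $E_x$, the set $E_x(I)$ is an ideal of $\Cstr(\g_x)$, and the state $\phi_x$ obtained by extending $\mathrm{ev}_x$ through the quotient $(C(X)+I)/I$ satisfies $\phi_x\circ E_x=\phi_x$ and $\phi_x(I)=0$, so $E_x(I)$ is not dense---contradicting simplicity; this uses only a Hahn--Banach extension and the definition of simplicity. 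You instead manufacture a nonzero element of $I\cap C(X)$ directly, using the Powers averaging property of the $\Cst$-simple group $\g_{x_0}$ (Haagerup/Kennedy), the equivariance of $E_{x_0}$ under conjugation by elements of $\g_{x_0}$ (which does hold and makes the commutation $E_{x_0}\circ\Theta=\Theta\circ E_{x_0}$ legitimate), localization by a bump function with $V\cap sV=\emptyset$ for the finitely many non-stabilizing indices, and the Cuntz-comparison (R{\o}rdam) lemma to extract $(g-3\varepsilon)_+\in I\cap C(X)$. All of these steps check out (and the trivial-stabilizer case of the ``in particular'' clause is handled correctly via Baire category), so your proof is valid; what it buys is an explicit positive element of $I\cap C(X)$ in the spirit of the classical topological-freeness argument, at the cost of importing a deep external theorem (non-circular here, since it concerns group $\Cst$-algebras only), whereas the paper's argument is shorter and self-contained.
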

\begin{proof}
We prove (i) by contradiction.
Suppose that there is a non-zero closed ideal $I$ in $\cross{X}$ such that $I \cap C(X) = 0$.
Then $E_X(I)$ is a non-zero since $E_X$ is faithful. 
Therefore $\mathrm{ev}_x \circ E_X (I) \not= 0$ for some $x$ in $X$ such that ${\g}_x$ is $\Cst$-simple (otherwise, we have $\mathrm{ev}_x (E_X(I)) = 0$ densely, this implies that $E_X(I)=0$). 
It follows that $E_x(I) \not= 0$ since $\mathrm{ev}_x \circ E_X = {\tau}_{\lambda} \circ E_x$,
where ${\tau}_{\lambda}$ is the canonical tracial state on $\Cstr ({\g}_x)$ defined by $\tau_\lambda (a) = \langle a \delta_e , \delta_e \rangle$ for any $a \in \Cstr \g$. 
We observe that $E_x(I) \subset \Cstr(\g_x)$ is an ideal of $C(X)$ since $\Cstr(\g_x)$ is contained in the multiplicative domain of $E_x$. 
We show that $E_x(I)$ is not dense in $\Cstr ({\g}_x)$, which yields the desired 
contradiction with $\Cst$-simplicity of $\Cstr ({\g}_x)$. 
The $\ast$-homomorphism
\[
C(X) + I \to (C(X) + I) / I \cong C(X) /(C(X)\cap I) = C(X) \xrightarrow{\mathrm{ev}_x} \IC
\]
extends a state ${\phi}_x$ on $\cross{X}$. 
We show that ${\phi}_x \circ E_x = {\phi}_x$. 
This implies that ${\rm ker} \ {\phi}_x \supset E_x(I)$ since $\phi_x(I)=0$ (hence $E_x(I)$ is not dense). Let $t$ be an element of $\g \setminus \g_x$. There is a function $f \in C(X)$ such that $f(x) = 1$ and $f(tx) = 0$.
Since $C(X)$ is contained in the multiplicative domain of ${\phi}_x$,
we have 
\begin{eqnarray*}
\phi_x (\lambda_t) 
&=& f(x) \phi_x (\lambda_t) 
=\phi_x (f) \phi_x (\lambda_t)
=\phi_x(f\lambda_t) \\
&=&\phi_x(\lambda_t (t^{-1}f))
=\phi_x(\lambda_t)\phi_x(t^{-1}f)
=\phi_x(\lambda_t)f(tx)
=0,
\end{eqnarray*}
therefore $\phi_x = \phi_x \circ E_x$ on $\Cstr \g$.
This implies that for every $f \in C(X)$ and $t \in \lambda$, we obtain 
\[
\phi_x \circ E_x (f\lambda_t) 
= \phi_x (f(x) E_{\g_x}(\lambda_t)) 
= f(x) \phi_x ( E_{\g_x}(\lambda_t)) 
= \phi_x (f) \phi_x (\lambda_t)
= \phi_x (f \lambda_t),
\]
thus we have ${\phi}_x \circ E_x = {\phi}_x$ . 

Next, we show (ii). 
Since ${\g}_x^\circ$ is amenable for any $x$, we define the representation ${\pi}_x$ of $\cross{X}$ on ${\ell}_2 (\g / {\g}_x^\circ)$, which given by
${\pi}_x (f {\lambda}_t) {\delta}_p = f(tpx) {\delta}_{tp}$ for $p\in \g / {\g}_x^\circ$.
Note that for $t$ and $s$ in $\g$ such that $s^{-1}t \in \g_x^{\circ}$, we have $tx=sx$, thus the notation $px$ is well-defined.
Set $\pi = {\bigoplus}_{x \in X} {\pi}_x$. 
The representation $\pi$ is faithful by the intersection property since $\mathrm{ker} (\pi) \cap C(X) = 0$.
This implies that $X$ is topologically free. 
Otherwise, there is an element $t$ in $\g \setminus \{ e\} $ and a non-zero function $f$ in $C(X)$ such that
$\mathrm{supp}(f)$ is contained in $\mathrm{Fix}(t)^{\circ}$, 
which implies that $\pi (f (1 - {\lambda}_t)) =0$ in contradiction with faithfulness.
\end{proof}
\section{Equivariant injective envelopes}\label{inj}
\begin{defn}
We say that an operator system (resp.\ unital $\Cst$-algebra) $V$ is a \emph{$\g$-operator system} (resp.\ \emph{unital $\g$-$\Cst$-algebra})
if it comes together with a complete order isomorphic (resp.\ unital $\ast$-isomorphic) $\g$-action on $V$.
A $\g$-equivariant unital complete positive map between $\g$-operator systems is called a \emph{$\g$-morphism}.
\end{defn} 
\begin{defn}
We say that $\g$-operator system $V$ is \emph{$\g$-injective} if $V$ is an injective object in the category of all $\g$-operator systems with $\g$-morphisms.  
Namely, for any $\g$-operator systems $W_0 \subset W$ and any $\g$-morphism $\phi$ from $W_0$ to $V$, there is a $\g$-morphism $\tilde{\phi}$ from $W$ to $V$ such that $\tilde{\phi}|_{W_0} = \phi$.
\end{defn}
For every compact $\g$-space $X$, we denote by $\bX$ the Gelfand spectrum of the \emph{$\g$-injective envelope} of $C(X)$, i.e.\ $C(\bX)$ satisfies the following properties (see \cite{hamana}).
\begin{itemize}
     \item The $\g$-$\Cst$-algebra $C(\tilde{X})$ is a $\g$-injective operator system. 
     \item The $\g$-$\Cst$-algebra $C(X)$ is contained in $C(\bX)$ as a unital $\g$-$\Cst$-subalgebra and $C(X) \subset C(\bX)$ is rigid, i.e.\ 
the identity map is the only $\g$-morphisms on $C(\bX)$ which is the identity map on $C(X)$.  
\end{itemize}
If $X$ is the one-point $\g$-space, $\bX$ is called \emph{the Hamana boundary}, denoted by ${\partial}_H \g$.

We prove some facts for $\bX$, a generalization of the properties for the Hamana boundary (\cite[Proposition 8 and Lemma 9]{ozawa}).
Recall that a subgroup $\Lambda \le \g$ is \emph{relatively amenable} if there is a $\Lambda$-invariant state on $\ell_{\infty}\g$. Since there is a $\Lambda$-morphism from $\ell_{\infty} \Lambda$ to $\ell_{\infty}\g$, the notions of amenability and relative amenability coincide for discrete groups. 
We denote by $q$ the $\g$-equivariant continuous surjection $\bX$ to $X$.
\begin{prop}\label{propbdy} 
Let $X$ be a compact $\g$-space. Then, one has the following.
\begin{enumerate}
     \item The space $\bX$ is a Stonean space.
     \item For any closed $\g$-invariant set $Z$ in $\bX$, we have $Z = \bX$ if $q(Z) = X$.
     \item The group ${\g}_y$ is amenable for every point $y$ in $\bX$.
\end{enumerate}
In particular, for any $t \in \g$, the set $\mathrm{Fix}(t)$ is clopen, hence $\g_y = \g_y^{\circ}$ for any $y \in \bX$.
\end{prop}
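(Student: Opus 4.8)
The plan is to establish the three numbered assertions more or less independently, using $\g$-injectivity of $C(\bX)$ as the main tool, and then deduce the "in particular" clause as an immediate consequence of (i) together with the fact that $\mathrm{Fix}(t)$ is the spectrum of a projection in $C(\bX)^{\g}$-flavored reasoning. For (i), I would use that $C(\bX)$ is a $\g$-injective operator system, hence in particular an injective operator system (forgetting the $\g$-action, by composing with the injective envelope in the non-equivariant sense, or directly: an injective $\g$-operator system is injective as an operator system because one can build a $\g$-invariant projection onto it by averaging—but since $\g$ is infinite discrete one instead uses that $C(\bX)$, being $\g$-injective and containing $C(X)$ rigidly, is a monotone complete $\Cst$-algebra; Hamana's theory gives that the injective envelope of a commutative $\Cst$-algebra is commutative and monotone complete, so its spectrum is Stonean). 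Concretely I would cite Hamana's construction: $C(\bX)$ sits as the image of an idempotent $\g$-morphism $\phi\colon C(\beta(\g\times X\text{-something}))\to C(\bX)$, and an injective commutative $\Cst$-algebra is an AW$^{*}$-algebra (indeed monotone complete), whence its Gelfand spectrum is extremally disconnected and compact, i.e.\ Stonean.

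For (ii), suppose $Z\subset\bX$ is closed, $\g$-invariant, and $q(Z)=X$. Consider the restriction $\ast$-homomorphism $r\colon C(\bX)\to C(Z)$. Since $Z$ is $\g$-invariant, $C(Z)$ is a $\g$-$\Cst$-algebra and $r$ is a surjective $\g$-morphism. Choose, by $\g$-injectivity of $C(\bX)$, a $\g$-morphism $\psi\colon C(Z)\to C(\bX)$ splitting $r$ on the image, i.e.\ with $\psi\circ r|_{C(X)}=\id_{C(X)}$—this uses that $q(Z)=X$ so that $r$ restricted to $C(X)\subset C(\bX)$ is injective, identifying $C(X)$ with a subalgebra of $C(Z)$, and extending $\id_{C(X)}\colon C(X)\hookrightarrow C(Z)\to C(\bX)$ along $C(X)\subset C(\bX)$... more carefully: $r|_{C(X)}$ is injective, so $C(X)\subset C(Z)$, and the inclusion $C(X)\hookrightarrow C(\bX)$ extends along $C(X)\subset C(Z)$ to a $\g$-morphism $\psi\colon C(Z)\to C(\bX)$. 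Then $\psi\circ r\colon C(\bX)\to C(\bX)$ is a $\g$-morphism restricting to $\id$ on $C(X)$, so by rigidity $\psi\circ r=\id_{C(\bX)}$; hence $r$ is injective, forcing $Z=\bX$.

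For (iii), fix $y\in\bX$ and let $\g_{y}$ be its stabilizer. Using $\g$-injectivity, extend the canonical inclusion $C(\bX)\hookrightarrow\ell_{\infty}(\g)$-type map—more precisely, evaluation at $y$ gives a state $\mathrm{ev}_{y}$ on $C(\bX)$ which is $\g_{y}$-invariant, and I would promote this to a $\g_{y}$-equivariant conditional-expectation-type map: consider the $\g$-morphism $C(\bX)\to\ell_{\infty}(\g)$, $f\mapsto(t\mapsto f(ty))$ (equivariant for left translation on $\ell_{\infty}(\g)$ and the given action on $C(\bX)$), which exists because $C(\bX)$ is $\g$-injective and $\ell_{\infty}(\g)=C(\beta\g)$ with the one-point space mapping in—actually the map is just defined directly. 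Then the state $\delta_{e}^{*}$ on $\ell_{\infty}(\g)$ pulls back, and $\g_{y}$-invariance of $\mathrm{ev}_{y}$ plus injectivity produces a $\g_{y}$-invariant mean: precisely, extend $\mathrm{ev}_{y}\colon C(\bX)\to\IC$ along $C(\bX)\subset\ell_{\infty}(\g)$... I would instead follow Ozawa's argument directly: by $\g$-injectivity there is a $\g$-morphism $E\colon\ell_{\infty}(\g)\to C(\bX)$ with $E|_{C(\bX)}=\id$ (extend $\id_{C(\bX)}$ along $C(\bX)\subset\ell_{\infty}(\g)$, using that $C(X)\hookrightarrow\ell_{\infty}(\g)$ via a point, hmm)—the cleanest route is: the map $\ell_{\infty}(\g)\to C(\bX)$ does not obviously exist, so instead use that $\mathrm{ev}_{y}\circ(\text{the equivariant map }C(\bX)\to\ell_{\infty}(\g))$... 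The honest statement is that $\mathrm{ev}_{y}$, being a $\g_{y}$-invariant state on the $\g_{y}$-$\Cst$-algebra $C(\bX)$, and $C(\bX)$ being $\g$-injective hence $\g_{y}$-injective, yields a $\g_{y}$-invariant state on $\ell_{\infty}(\g_{y})$ by restricting a $\g_{y}$-morphism $\ell_{\infty}(\g_{y})\to C(\bX)$ extending $\IC\hookrightarrow C(\bX)$ and then applying $\mathrm{ev}_{y}$; $\g_{y}$-invariance of the composite forces it to be a left-invariant mean on $\g_{y}$, so $\g_{y}$ is amenable.

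The main obstacle I expect is (iii): getting the bookkeeping right so that $\g$-injectivity of $C(\bX)$ is correctly downgraded to $\g_{y}$-injectivity (a $\g$-injective operator system is $H$-injective for any subgroup $H\le\g$, since an $H$-operator system can be induced up to a $\g$-operator system, or more simply $H$-morphisms into a $\g$-injective object extend along $H$-inclusions by forgetting structure and re-averaging—this needs the standard lemma that $\g$-injective implies $H$-injective, which I would state and use), and then correctly producing the invariant mean on $\g_{y}$ from a $\g_{y}$-fixed state. The other two parts are routine once one invokes Hamana's monotone-completeness results for (i) and the rigidity of $C(X)\subset C(\bX)$ for (ii). Finally, for the "in particular" clause: since $\bX$ is Stonean, the clopen sets form a base and every closed set with clopen... rather, $\mathrm{Fix}(t)=\{y: ty=y\}$ is closed, and its complement is $\{y: ty\ne y\}$; I would argue $\mathrm{Fix}(t)$ is open by showing its indicator lies in $C(\bX)$—the element $p_{t}:=\,$"the support projection of $f-t\!\cdot\!f$ summed over $f$" is a projection in the AW$^{*}$-algebra $C(\bX)$ whose complement is $\mathrm{Fix}(t)$—hence $\mathrm{Fix}(t)$ is clopen; consequently the germ stabilizer $\g_{y}^{\circ}$ equals $\g_{y}$ for every $y\in\bX$.
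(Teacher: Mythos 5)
Parts (i)--(iii) of your proposal are essentially the paper's own argument. For (i) you settle, after some hesitation, on the correct route: $C(\bX)$ is a ucp retract, via $\g$-morphisms and rigidity, of the injective algebra $\ell_{\infty}(\g\times X)\cong\ell_{\infty}(\g,\ell_{\infty}X)$, hence injective as an operator system, hence a monotone complete commutative $\Cst$-algebra with Stonean spectrum; this is exactly what the paper does (your first idea, averaging to get a $\g$-equivariant projection, would fail for non-amenable $\g$, but you discard it yourself). Part (ii) is verbatim the paper's argument: $r|_{C(X)}$ is injective because $q(Z)=X$, extend $C(X)\hookrightarrow C(\bX)$ along $C(X)\subset C(Z)$ by $\g$-injectivity, and conclude $\psi\circ r=\id_{C(\bX)}$ by rigidity. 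For (iii) you take a slightly different but valid route: the paper composes $\mathrm{ev}_y$ with the $\g$-morphism $\ell_{\infty}(\g,\ell_{\infty}X)\to C(\bX)$ and the embedding $\ell_{\infty}\g\hookrightarrow\ell_{\infty}(\g,\ell_{\infty}X)$ to get a $\g_y$-invariant state on $\ell_{\infty}\g$ (relative amenability, which coincides with amenability here), whereas you invoke the standard lemma that $\g$-injectivity passes to the subgroup $\g_y$ and produce an invariant mean directly on $\ell_{\infty}(\g_y)$; either works, and you correctly flag the restriction lemma as the ingredient to be quoted.

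The genuine gap is in the final clause. Your argument that $\mathrm{Fix}(t)$ is clopen does not go through: in the monotone complete algebra $C(\bX)$ the supremum $p_t$ of the support projections of the elements $f-tf$, $f\in C(\bX)$, is the characteristic function of $\overline{\bX\setminus\mathrm{Fix}(t)}$, so $1-p_t$ is the characteristic function of the \emph{interior} of $\mathrm{Fix}(t)$, not of $\mathrm{Fix}(t)$ itself. That these coincide, i.e.\ that $\mathrm{Fix}(t)$ has empty boundary, is precisely what has to be proved; it is false for general closed subsets of a Stonean space, and nothing in your projection construction uses that $t$ acts as a homeomorphism. The paper closes this step by citing Frol\'ik's theorem (the fixed-point set of a homeomorphism of an extremally disconnected compact Hausdorff space is clopen), which is a genuinely non-trivial result; you need to invoke it (or reproduce its decomposition argument) here, since the clopenness of $\mathrm{Fix}(t)$ and the resulting identity $\g_y=\g_y^{\circ}$ are exactly what is used later, e.g.\ in the proof of Theorem \ref{thmbdy} via Lemma \ref{lemsta} and in the results on stability systems of $\bX$.
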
 
\begin{proof}
There is an including $\g$-equivariant unital $*$-homomorphism from $C(X)$ to the $\g$-injective $\Cst$-algebra ${\ell}_{\infty}(\g, {\ell}_{\infty}X)$, which is defined by $f \mapsto (tf)_{t \in \g}$. 
It follows that there are $\g$-morphisms $\phi \colon {\ell}_{\infty}(\g, {\ell}_{\infty}X) \to C(\bX)$ and $\psi \colon C(\bX) \to {\ell}_{\infty}(\g, {\ell}_{\infty}X)$, which extend the identity map on $C(X)$.
Since ${\ell}_{\infty}(\g, {\ell}_{\infty}X)$ is also an injective operator system, $C(\bX)$ is an injective operator system, thus $\bX$ is Stonean. 
Then $\mathrm{Fix(t)}$ is clopen by Frol\'ik's theorem.
 
Next we show the condition (ii).
Suppose that there is a closed $\g$-invariant set $Z \subsetneq \bX$ suth that $q(Z) = X$, then the corresponding $\g$-equivariant quotient map $\pi$ from $C(\bX)$ to $C(Z)$ is not faithful.
Since $q(Z) = X$, there is a $\g$-morphism $\phi$ from $C(Z)$ to $C(\bX)$ such that $\phi \circ \pi|_{C(X)} = \id_{C(X)}$ by $\g$-injectivity of $C(\bX)$. This implies that $\phi \circ \pi = \id_{C(\bX)}$ by rigidity, hence $\pi$ is faithful, a contradiction.

Next, we prove amenability of ${\g}_y$. 
There is a inclusion $\iota$ from ${\ell}_{\infty}\g$ to ${\ell}_{\infty}(\g, {\ell}_{\infty}X)$ as a unital $\g$-$\Cst$-subalgebra. 
Since the map $\mathrm{ev}_x \circ \phi \circ \iota$ is a ${\g}_y$-invariant state on $\ell_{\infty}\g$, we obtain (relative) amenability of $\g_y$.
\end{proof}
 We obtain the following result the case $X$ being trivial (see \cite[Theorem 6.2]{kk}).
\begin{thm}\label{thmbdy}
Let $X$ be a compact $\g$-space. Then the following are equivalent.
\begin{enumerate}
    \item The space $X$ has the intersection property.
    \item The space $\bX$ has the intersection property.
    \item The space $\bX$ is (topologically) free.
\end{enumerate} 
\end{thm}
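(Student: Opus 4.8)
\textit{Proof plan.}

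The equivalence (ii)$\Leftrightarrow$(iii) is immediate from the preceding results. By Proposition~\ref{propbdy}(iii) every point stabilizer $\g_y$, $y\in\bX$, is amenable and satisfies $\g_y=\g_y^{\circ}$, so on $\bX$ topological freeness and freeness coincide. Hence (iii)$\Rightarrow$(ii) is Lemma~\ref{lemsta}(i) (topological freeness forces the intersection property), and (ii)$\Rightarrow$(iii) is Lemma~\ref{lemsta}(ii) applied to $\bX$ (the intersection property together with amenability of all $\g_y^{\circ}$ forces topological freeness). For (i)$\Leftrightarrow$(ii) I would set things up inside the inclusion $\cross X\subseteq\cross{\bX}$ induced by the $\g$-equivariant surjection $q\colon\bX\to X$: one has $E_{\bX}|_{\cross X}=E_X$ and, for $y\in\bX$ with $x=q(y)$, $\g_y\le\g_x$ together with $E_y|_{\cross X}=E^{\g_x}_{\g_y}\circ E_x$, where $E^{\g_x}_{\g_y}\colon\Cstr(\g_x)\to\Cstr(\g_y)$ is the canonical conditional expectation. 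I would also use repeatedly that, since the $\g$-action on a crossed product is implemented by the unitaries $\lambda_t$ lying inside it, \emph{every} closed two-sided ideal of $\cross X$ and of $\cross{\bX}$ (and hence its intersection with $C(X)$, resp.\ $C(\bX)$) is automatically $\g$-invariant.

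For (ii)$\Rightarrow$(i): let $0\ne I\trianglelefteq\cross X$ and suppose, for contradiction, that $I\cap C(X)=0$. Let $\bar I$ be the ideal generated by $I$ in $\cross{\bX}$; it is nonzero and $\g$-invariant, so by (ii) the $\g$-invariant ideal $\bar I\cap C(\bX)$ of $C(\bX)$ is $C_0(U)$ for a nonempty $\g$-invariant open set $U\subseteq\bX$. Then $\bX\setminus U$ is a proper closed $\g$-invariant set, so Proposition~\ref{propbdy}(ii) gives $q(\bX\setminus U)\subsetneq X$; fix $x_0\in X$ with $q^{-1}(x_0)\subseteq U$. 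Since $\lambda_t\in\cross X$ for all $t$, one has $\bar I=\overline{\operatorname{span}}\{\,fig:f,g\in C(\bX),\ i\in I\,\}$, and as $C(\bX)\subseteq\operatorname{mult}(E_y)$ this yields $E_y(\bar I)\subseteq\overline{E_y(I)}$ for every $y\in\bX$. Choosing, by Urysohn, $h\in C_0(U)\cap\bar I$ with $h\equiv1$ on the compact set $q^{-1}(x_0)$, we get $1=E_y(h)\in\overline{E_y(I)}$ for every $y\in q^{-1}(x_0)$; and since $\Cstr(\g_y)\subseteq\operatorname{mult}(E_y)$, the space $E_y(I)$ is an ideal of $\Cstr(\g_y)$, whence $\overline{E_y(I)}=\Cstr(\g_y)$. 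On the other hand, running the argument of Lemma~\ref{lemsta}(i) at the point $x_0$ — which uses only $I\cap C(X)=0$ — produces a state $\phi_{x_0}$ on $\cross X$ with $\phi_{x_0}(I)=0$ and $\phi_{x_0}=\phi_{x_0}\circ E_{x_0}$, so that $E_{x_0}(I)$ is a \emph{proper} ideal of $\Cstr(\g_{x_0})$. These two facts must be combined, via $E_y(I)=E^{\g_x}_{\g_y}(E_{x_0}(I))$, into a contradiction with $I\cap C(X)=0$; the clean way to do this is to observe that $q^{-1}(x_0)$, being a free $\g_{x_0}$-space that sits inside the $\g_{x_0}$-injective $C(\bX)$, is rich enough to force $\g_{x_0}$ to be $\Cst$-simple (reducing, through the $\g_{x_0}$-injective envelope, to the already-available case $X=\operatorname{pt}$, \cite[Theorem~6.2]{kk}). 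As $x_0$ may be taken in the nonempty open set $\{x:q^{-1}(x)\subseteq U\}$, which meets any dense set of $\Cst$-simple stabilizers, Lemma~\ref{lemsta}(i) then gives $X$ the intersection property.

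For (i)$\Rightarrow$(ii): given $0\ne J\trianglelefteq\cross{\bX}$, it suffices to show $J\cap\cross X\ne0$, for then $J\cap\cross X$ is a nonzero ideal of $\cross X$, hence $(J\cap\cross X)\cap C(X)\ne0$ by (i), hence $J\cap C(\bX)\supseteq J\cap C(X)\ne0$. This is established by the same circle of ideas in reverse: $J$ is $\g$-invariant, $E_{\bX}$ is faithful so $\overline{E_{\bX}(J)}=C_0(V)$ with $V$ a nonempty $\g$-invariant open set, and Proposition~\ref{propbdy}(ii) produces a point of $X$ whose full fibre lies in $V$; pulling back appropriate functions and using the rigidity of $C(X)\subseteq C(\bX)$ (any $\g$-morphism $\cross{\bX}\to C(\bX)$ fixing $C(\bX)$ has $\theta(\lambda_t)$ supported in $\operatorname{Fix}(t)$) places an honest element of $\cross X$ inside $J$. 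The main obstacle, and the step I expect to be hardest, is exactly this transfer of ideal-theoretic information between $\cross X$ and $\cross{\bX}$ across the fibres of $q$ — equivalently, the assertion in the (ii)$\Rightarrow$(i) argument that freeness of the $\g_{x}$-space $q^{-1}(x)$ forces $\Cst$-simplicity of $\g_x$; everything else is a bookkeeping combination of Proposition~\ref{propbdy}, Lemma~\ref{lemsta}, and the multiplicative-domain properties of the conditional expectations $E_X$, $E_x$, $E_{\bX}$, $E_y$.
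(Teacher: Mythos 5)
Your handling of (ii)$\Leftrightarrow$(iii) matches the paper (Lemma~\ref{lemsta} plus Proposition~\ref{propbdy}), but both of your remaining directions have genuine gaps, and the step you yourself single out as the crux of (ii)$\Rightarrow$(i) is false. The claim that ``$q^{-1}(x_0)$, being a free $\g_{x_0}$-space sitting inside the $\g_{x_0}$-injective $C(\bX)$, forces $\g_{x_0}$ to be $\Cst$-simple'' fails already for $\g=F_2$ acting on its Gromov boundary $X=\partial F_2$: this action is minimal, topologically free and strongly proximal, so $\cross{X}$ is simple and $X$ has the intersection property, hence $\bX$ is free and every fibre $q^{-1}(x)$ is a free $\g_x$-space; yet for $x$ an endpoint of the axis of a generator, $\g_x$ is infinite cyclic, amenable and nontrivial, hence never $\Cst$-simple. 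The point is that a nontrivial amenable group can perfectly well act freely on some compact space; $\Cst$-simplicity of $\g_{x_0}$ requires freeness of its action on $\partial_H\g_{x_0}$, and the fibre is not that boundary --- $\g_{x_0}$-injectivity of $C(\bX)$ only yields a $\g_{x_0}$-equivariant \emph{measure-valued} map from $\partial_H\g_{x_0}$ to the fibre, and an invariant measure for a free action rules out nothing. (Note also that the two facts you derive before this step --- $\overline{E_{x_0}(I)}$ proper in $\Cstr(\g_{x_0})$ versus $\overline{E_y(I)}=\Cstr(\g_y)$ --- are not in tension by themselves: when $\g_y=\{e\}$ the expectation $E^{\g_x}_{\g_y}$ is the canonical trace, which maps any nonzero ideal onto a dense subset of $\IC$.) Since your whole (ii)$\Rightarrow$(i) is routed through Lemma~\ref{lemsta}(i) via a dense set of $\Cst$-simple stabilizers, the argument does not survive the failure of this step. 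In the (i)$\Rightarrow$(ii) direction, the decisive move --- from ``$E_{\bX}(J)$ is dense in $C_0(V)$'' to ``$J$ contains a nonzero element of $\cross{X}$ by pulling back functions and using rigidity'' --- is not an argument at all: producing a nonzero element of $C(\bX)$ (let alone of $\cross{X}$) inside $J$ from the size of $E_{\bX}(J)$ is exactly the intersection property for $\bX$, i.e.\ the statement being proved, and your hypothesis (i) is only invoked after this unproven transfer.

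For contrast, the paper never mentions $\Cst$-simplicity of stabilizers and avoids any fibrewise ideal transfer. For (i)$\Rightarrow$(ii) it takes a quotient $\pi$ of $\cross{\bX}$ with $\ker\pi\cap C(\bX)=0$, notes $\pi$ is faithful on $\cross{X}$ by (i), uses $\g$-injectivity to get a $\g$-morphism $\phi$ with $\phi\circ\pi|_{\cross{X}}=E_X$, and then rigidity plus the multiplicative domain to conclude $\phi\circ\pi=E_{\bX}$, whence $\pi$ is faithful because $E_{\bX}$ is. For (ii)$\Rightarrow$(i) it extends a representation $\pi$ of $\cross{X}$ (faithful on $C(X)$) to a ucp map $\tilde\pi$ on $\cross{\bX}$ by Arveson, uses injectivity and rigidity to get a $*$-homomorphism $\phi$ on $\Cst(\tilde\pi(C(\bX)))$ fixing $C(\bX)$, promotes it to a quotient $\tilde\phi$ of $D=\Cst(\tilde\pi(\cross{\bX}))$ by the ideal $L=\overline{\ker(\phi)\cdot\pi(\Cstr\g)}$, and applies the intersection property of $\bX$ to the $*$-homomorphism $\tilde\phi\circ\tilde\pi$, which is faithful on $C(\bX)$ and annihilates $\ker\pi$. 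If you want to repair your write-up, this operator-system route (injectivity, rigidity, multiplicative domains) is the mechanism that replaces both of your problematic steps.
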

\begin{proof}
First, we prove that (i) implies (ii). Suppose $X$ has the intersection property. 
We show that every quotient map $\pi$ from $\cross{\bX}$ to a $\Cst$-algebra $A$ is faithful if ${\rm ker}(\pi) \cap C(\bX) = 0$.   
Since $\mathrm{ker} (\pi) \cap C(X) = 0$, the quotient map $\pi$ is faithful on $\cross{X}$ by the intersection property for $X$.
By $\g$-injectivity of  $C(\bX)$, there is a $\g$-morphism $\phi$ from $A$ to $C(\bX)$ 
such that  $\phi \circ \pi |_{\cross{X}} = E_X$. 
This implies that $\phi \circ \pi |_{C(\bX)} = \id_{C(\bX)}$ by rigidity of $C(X) \subset C(\bX)$.
Therefore, we obtain $C(\bX) \subset {\rm mult}(\phi \circ \pi)$.
It follows that $\phi \circ \pi  = E_{\bX}$, hence $\pi$ is faithful. 

Next, we prove that (ii) implies (i). Suppose $\bX$ has the intersection property.
Let $\pi$ be a representation of $\cross{X}$ on a Hilbert space $\cH$ such that ${\rm ker} \ \pi \cap C(X) = 0$.
We prove that $\pi$ is injective.
By Arveson's extension theorem, we extend $\pi$ to a unital completely positive map $\tilde{\pi}$ from $\cross{\bX}$ to $\IB (\cH)$. 
We consider a $\Cst$-subalgebra of $\IB (\cH)$ defined by
\[
D = \Cst(\tilde{\pi}(\cross{\bX})) = \mathrm{closure} (\tilde{\pi} (C(\bX)) \cdot \pi(\Cstr \g) ).
\] 
We define $\g$-action on $D$ as $\mathrm{Ad} \ \pi ( \cdot )$, then $\tilde{\pi}$ is $\g$-equivariant.  
Since $\pi$ is faithful on $C(X)$, there is a $\g$-morphism $\phi$ from $\Cst (\tilde{\pi}(C(\bX)))$ to $C(\bX)$ 
such that $\phi \circ \pi = \id_{C(X)}$ by $\g$-injectivity of $C(\bX)$, which implies that $\phi \circ \tilde{\pi}|_{C(\bX)} = \id_{C(\bX)}$ by rigidity.
It follows that $\Cst ( \tilde{\pi}(C(\bX)) ) \subset {\rm mult}(\phi)$, hence $\phi$ is a $*$-homomorphism.
Now, consider a subset of $D$ given by
\[
L = \mathrm{closure} (\mathrm{ker}(\phi) \cdot \pi(\Cstr \g) ).
\]  
Since $\phi$ is $\g$-equivariant, the set $\mathrm{ker}(\phi)$ is $\g$-invariant,
therefore we have 
\[
\mathrm{ker}(\phi)\cdot \pi(\IC(\g)) = \pi(\IC(\g)) \cdot \mathrm{ker} (\phi).
\]
This implies that $L$ is an ideal of $D$.
Since $L \cap \Cst ( \tilde{\pi}(C(\bX)) ) = \mathrm{ker}(\phi)$, 
the map $\phi$ extends the quotient map $\tilde{\phi}$ from $D$ to $D/L$. 
It follows that $\tilde{\phi} \circ \tilde{\pi}$ is a $\ast$-homomorphism which is faithful on $C(\bX)$. 
Thus, we have $\mathrm{ker} (\pi) \subset \mathrm{ker} (\tilde{\phi} \circ \tilde{\pi}) = 0$ 
since $\bX$ has the intersection property.  

The equivalence of (ii) and (iii) follows from Lemma \ref{lemsta} and Proposition \ref{propbdy}. 
\end{proof}
\section{$\g$-morphisms to injective envelopes} \label{morph}
In this section, we prove equivalence of the intersection property and the ``unique trace property'' for crossed products. First, we show a lemma to prove the theorem.
\begin{lem}\label{lemcond}
Let $Y$ be compact $\g$-space. 
If $Y$ is topologically free, then the only conditional expectation from $\cross{Y}$ to $C(Y)$ is the canonical conditional expectation $E_Y$. 
Moreover, if  $Y$ is Stonean and ${\g}_y$ is amenable for every $y$ in $Y$, then the
converse is also true.  
\end{lem}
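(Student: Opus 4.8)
The plan is to prove the two implications separately; only the second uses the Stonean/amenability hypotheses, and I will prove it in contrapositive form. For the forward direction, suppose $Y$ is topologically free and let $E\colon\cross{Y}\to C(Y)$ be a conditional expectation. Since $E$ restricts to the identity on $C(Y)$, we have $C(Y)\subseteq\mathrm{mult}(E)$, so $E$ is a $C(Y)$-bimodule map. Fix $t\in\g\setminus\{e\}$ and put $g=E(\lambda_t)\in C(Y)$. Applying the bimodule property to the identity $f\lambda_t=\lambda_t(t^{-1}f)$ gives $fg=g\,(t^{-1}f)$ in $C(Y)$ for every $f\in C(Y)$; evaluating at a point $y$ shows that $g(y)\neq 0$ would force $f(y)=f(t^{-1}y)$ for all $f$, i.e.\ $y\in\mathrm{Fix}(t)$. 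Hence the open set $\{g\neq 0\}$ is contained in $\mathrm{Fix}(t)$, so it is empty by topological freeness, and $E(\lambda_t)=0$. Therefore $E$ coincides with $E_Y$ on the algebraic crossed product $C(Y)\odot\IC[\g]$, hence everywhere by continuity.

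For the converse, assume $Y$ is Stonean with all stabilizers amenable but not topologically free; I will produce a conditional expectation onto $C(Y)$ distinct from $E_Y$. Since $Y$ is Stonean, $\mathrm{Fix}(t)$ is clopen for every $t$ (Frol\'ik's theorem, as in Proposition \ref{propbdy}), so $\g_y=\g_y^{\circ}$ for every $y$; failure of topological freeness therefore yields $t_0\in\g\setminus\{e\}$ whose fixed-point set $V:=\mathrm{Fix}(t_0)$ is a nonempty clopen subset of $Y$. Let $H=\bigcap_{v\in V}\g_v$, a subgroup containing $t_0$; it is amenable, being a subgroup of the amenable group $\g_v$ for any $v\in V$. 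Because every $h\in H$ fixes $V$ pointwise, one checks directly that each $v_h:=\mathbf{1}_V\lambda_h$ lies in the corner $\mathbf{1}_V(\cross{Y})\mathbf{1}_V$, is a unitary of that corner, commutes with $C(V):=\mathbf{1}_V C(Y)$, that $h\mapsto v_h$ is a homomorphism, and that $\mathbf{1}_{V^c}v_h=v_h\mathbf{1}_{V^c}=0$. Using amenability of $H$ (so the minimal and maximal $\Cst$-norms agree) together with faithfulness of $E_Y$ on the corner, the assignment $g\otimes\lambda_h\mapsto g\,v_h$ extends to an isometric embedding $C(V)\otimes\Cstr(H)\hookrightarrow\cross{Y}$, and the $\Cst$-subalgebra $\mathcal{B}$ of $\cross{Y}$ generated by $C(Y)$ and $\{v_h:h\in H\}$ splits as $\mathcal{B}=C(V^c)\oplus\bigl(C(V)\otimes\Cstr(H)\bigr)$, with unit $\mathbf{1}_{V^c}\oplus\mathbf{1}_V$, since the $v_h$ are supported in the $V$-corner.

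Since $H$ is amenable and nontrivial, there is a character $\epsilon\colon\Cstr(H)\to\IC$ with $\epsilon(\lambda_h)=1$ for all $h$, and it differs from the canonical trace. Define $\hat{\Phi}\colon\mathcal{B}\to C(Y)$ by $\hat\Phi=\id_{C(V^c)}\oplus\bigl(\id_{C(V)}\otimes\epsilon\bigr)$; this is a unital completely positive map restricting to the identity on $C(Y)\subseteq\mathcal{B}$. As $Y$ is Stonean, $C(Y)$ is an injective operator system, so $\hat\Phi$ extends to a unital completely positive map $E\colon\cross{Y}\to C(Y)$; since $E|_{C(Y)}=\id$, the map $E$ is a conditional expectation (automatically idempotent and a bimodule map by Tomiyama's theorem). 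Finally $E(\mathbf{1}_V\lambda_{t_0})=\hat\Phi(\mathbf{1}_V\otimes\lambda_{t_0})=\epsilon(\lambda_{t_0})\,\mathbf{1}_V=\mathbf{1}_V\neq 0=E_Y(\mathbf{1}_V\lambda_{t_0})$, so $E\neq E_Y$, as required.

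I expect the bulk of the work, and the main obstacle, to be in the converse: identifying the corner $\Cst(C(V),\{v_h\})$ with $C(V)\otimes\Cstr(H)$ and splitting $\mathcal{B}$, which is where the Stonean hypothesis (to make $V$ clopen and $C(Y)$ injective) and the amenability hypothesis (to pin down the tensor norm and to supply the character $\epsilon$) are both essential. The forward implication is a routine support argument.
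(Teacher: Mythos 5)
Your proof is correct, but the second half takes a genuinely different route from the paper. For the forward direction you and the paper do essentially the same multiplicative-domain/support argument (the paper passes through density of the points with trivial stabilizer via Baire category; your direct observation that the open set $\{E(\lambda_t)\neq 0\}$ sits inside $\mathrm{Fix}(t)$ is a mild streamlining). For the converse, the paper does not localize at all: it writes down the non-canonical expectation globally by $\Phi(f\lambda_t)=f\,\chi_{\mathrm{Fix}(t)}$ (the Stonean hypothesis enters only through Frol\'ik's theorem, making each $\mathrm{Fix}(t)$ clopen) and proves boundedness/complete positivity from the pointwise identity $\Phi(\cdot)(y)=\tau_0\circ E_y$, where $\tau_0$ is the trivial character of the amenable stabilizer $\g_y$; amenability of \emph{every} stabilizer is what makes this formula continuous, and this explicit map is reused later in the paper (it is essentially $\tilde{E}$ in Section 7). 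You instead fix one $t_0$ with $V=\mathrm{Fix}(t_0)$ nonempty clopen, exhibit a copy of $C(V)\otimes\Cstr(H)$ inside the corner (your identification is fine: amenability of $H$ and nuclearity of $C(V)$ settle the norm, and composing with $E_Y$ gives $\id\otimes\tau_\lambda$, which is faithful, so the map is injective), apply the trivial character of $H$ there, and extend by injectivity of $C(Y)$ plus Tomiyama. Your construction uses the Stonean hypothesis a second time (injectivity of $C(Y)$) but needs amenability only of the stabilizers over $V$ (indeed only of one $\g_v\supseteq H$), and it produces a non-canonical expectation abstractly rather than by an explicit formula; the paper's approach buys an explicit, globally defined expectation with no extension step. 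Both arguments are complete and correct.
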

\begin{proof}
Suppose that $Y$ is topologically free and let $\Phi$ be a conditional expectation from $\cross{Y}$ to $C(Y)$. 
The space $Y$ is topologically free if and only if  $\{ y \in Y :  {\g}_y =\{e\} \}$ (denoted by $Y_0$) is dense in $Y$ since $\bigcup_{t\in \g} \partial \mathrm{Fix}(t)$ has no interior by Baire category theorem.
Fix an element $t$ in $\g \setminus \{e\}$. 
For every $y$ in $Y_0$, we have  $ty \not= y$.
Then there is a non-zero function in $C(X)$ such that $f(y)=1$ and $f {\lambda}_t f = 0$.
It follows that $\Phi ({\lambda}_t)(y) = \Phi (f {\lambda}_t f) (y) = 0$, hence $\Phi (t) =0$. 
This implies that $\Phi = E_Y$.

Next, we show the converse. Suppose that $Y$ is Stonean space and ${\g}_y$ is amenable for every $y$ in $Y$.
There is a conditional expectation $\Phi$ from $\cross{Y}$ to $C(Y)$, defined by
$\Phi (f {\lambda}_t) = f \cdot \chi_{ \mathrm{Fix} (t)}$.
Continuity of $\Phi$ follows from the equality $\Phi( \cdot) (y) = {\tau}_0 \circ E_y$, where ${\tau}_0$ is the unit character of ${\g}_y$. 
Note that ${\tau}_0$ is continuous on $\Cstr (\g_y)$ since $\g_y$ is amenable.
It follows that there is a non-canonical conditional expectation if $Y$ is not (topologically) free. 
\end{proof}
\begin{thm}\label{thmcond} 
Let $X$ be a compact $\g$-space.
Then the following are equivalent. 
\begin{enumerate}
    \item The space $X$ has the intersection property.
    \item The only $\g$-morphism from $\cross{X}$ to $C(\bX)$ which is the identity map on $C(X)$ is the canonical conditional expectation $E_X$.
\end{enumerate}
\end{thm}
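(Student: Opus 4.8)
The plan is to prove the two implications separately, routing both through the equivariant injective envelope $C(\bX)$ and the structural facts of Section \ref{inj}. I will use throughout that the $\g$-action on $\cross{X}$ (and on $\cross{\bX}$) is inner, implemented by $\mathrm{Ad}\,\lambda_t$, and that the equivariant unital inclusion $C(X) \subset C(\bX)$ induces an inclusion $\cross{X} \subset \cross{\bX}$ of $\g$-operator systems (a standard fact: the canonical expectations satisfy $E_{\bX}|_{\cross{X}} = E_X$, and faithfulness of $E_X$ forces injectivity of the induced map on reduced crossed products).

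For the implication (i) $\Rightarrow$ (ii), let $\Phi \colon \cross{X} \to C(\bX)$ be a $\g$-morphism restricting to $\id_{C(X)}$. First I would extend $\Phi$ to a $\g$-morphism $\tilde{\Phi} \colon \cross{\bX} \to C(\bX)$, using $\g$-injectivity of $C(\bX)$ applied to the $\g$-operator-system inclusion $\cross{X} \subset \cross{\bX}$. The restriction $\tilde{\Phi}|_{C(\bX)}$ is then a $\g$-morphism of $C(\bX)$ to itself which is the identity on $C(X)$, so rigidity of $C(X) \subset C(\bX)$ forces $\tilde{\Phi}|_{C(\bX)} = \id_{C(\bX)}$; hence $C(\bX) \subset \mathrm{mult}(\tilde{\Phi})$, and $\tilde{\Phi}$ is a conditional expectation from $\cross{\bX}$ onto $C(\bX)$. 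Now the hypothesis enters: since $X$ has the intersection property, Theorem \ref{thmbdy} gives that $\bX$ is topologically free, so Lemma \ref{lemcond} shows that $E_{\bX}$ is the unique conditional expectation $\cross{\bX} \to C(\bX)$, whence $\tilde{\Phi} = E_{\bX}$ and therefore $\Phi = E_{\bX}|_{\cross{X}} = E_X$.

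For (ii) $\Rightarrow$ (i), I would argue by contraposition. Assuming $X$ fails the intersection property, pick a non-zero closed ideal $I \subset \cross{X}$ with $I \cap C(X) = 0$; since the $\g$-action is inner, $I$ is automatically $\g$-invariant, so $A := (\cross{X})/I$ is a $\g$-$\Cst$-algebra with equivariant quotient map $\pi$. As $\pi$ is faithful on $C(X)$, the inverse $(\pi|_{C(X)})^{-1} \colon \pi(C(X)) \to C(X) \subset C(\bX)$ is a $\g$-morphism on the $\g$-operator subsystem $\pi(C(X)) \subset A$, which extends by $\g$-injectivity of $C(\bX)$ to a $\g$-morphism $\phi \colon A \to C(\bX)$. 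Then $\phi \circ \pi \colon \cross{X} \to C(\bX)$ is a $\g$-morphism restricting to $\id_{C(X)}$ which vanishes on $I$, hence differs from the faithful expectation $E_X$ on $a^{*}a$ for any $0 \neq a \in I$; this contradicts (ii).

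The routine points — the inclusion of reduced crossed products, innerness of the $\g$-action, and the multiplicative-domain computation — are standard. The substance lies in the first implication, and the main obstacle there is conceptual rather than computational: the intersection property of $X$ is not directly usable on $\cross{X}$, and the key move is to transport the whole situation to $\bX$, where Theorem \ref{thmbdy} converts it into topological freeness of $\bX$ and where rigidity forces the extended $\g$-morphism to be an honest conditional expectation. The one point meriting care is confirming that $\cross{X}$ sits inside $\cross{\bX}$ as a $\g$-operator system, so that $\g$-injectivity of $C(\bX)$ legitimately applies to the extension step.
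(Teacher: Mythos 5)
Your proof is correct, and it splits into one half that coincides with the paper and one half that takes a genuinely different route. For (i) $\Rightarrow$ (ii) you do exactly what the paper does: embed $\cross{X}$ into $\cross{\bX}$, extend the given $\g$-morphism by $\g$-injectivity, use rigidity to see the extension is a conditional expectation onto $C(\bX)$, and then invoke Theorem \ref{thmbdy} together with the first half of Lemma \ref{lemcond} to conclude it must be $E_{\bX}$. For (ii) $\Rightarrow$ (i), however, the paper does not argue directly: it upgrades the extension step to the statement that condition (ii) is \emph{equivalent} to uniqueness of conditional expectations $\cross{\bX} \to C(\bX)$, which by Proposition \ref{propbdy} and the converse half of Lemma \ref{lemcond} (the explicit non-canonical expectation $f\lambda_t \mapsto f\chi_{\mathrm{Fix}(t)}$ on the Stonean space $\bX$ with amenable stabilizers) is equivalent to freeness of $\bX$, and then closes the loop with Theorem \ref{thmbdy}. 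You instead argue by contraposition from a non-zero ideal $I$ with $I \cap C(X)=0$: pass to the quotient $\pi\colon \cross{X} \to A$, use $\g$-injectivity to extend $(\pi|_{C(X)})^{-1}$ to a $\g$-morphism $\phi\colon A \to C(\bX)$, and observe that $\phi\circ\pi$ is a $\g$-morphism fixing $C(X)$ that kills $I$, hence differs from the faithful $E_X$. This is sound (the ideal is $\g$-invariant because the action is inner, and the inclusion $\cross{X}\subset\cross{\bX}$ you flag is the same standard fact the paper's proof also uses tacitly), and it buys you independence from the converse direction of Lemma \ref{lemcond}, from Proposition \ref{propbdy}(iii), and from the harder implication of Theorem \ref{thmbdy} in that direction; what it does not give you is the paper's sharper intermediate dictionary --- that (ii) is equivalent to uniqueness of conditional expectations on $\cross{\bX}$ and to topological freeness of $\bX$ --- which is the form of the statement the paper's later arguments (e.g.\ the use of non-canonical expectations in Section \ref{maxid}) are modeled on.
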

\begin{proof}
Let $\phi$ be a $\g$-morphism from $\cross{X}$ to $C(\bX)$ such that $\phi|_{C(X)}=\id_{C(X)}$. 
There is a $\g$-morphism $\Phi$ from $\cross{\bX}$ onto $C(\bX)$ extending $\phi$.
Then $\Phi$ is a conditional expectation since $C(X) \subset C(\bX)$ is rigid.
Hence (ii) is equivalent to the uniqueness of conditional expectations from $\cross{\bX}$ onto $C(\bX)$, that is equivalent to  the (topological) freeness of $\bX$ by Proposition \ref{propbdy} and Lemma \ref{lemcond}. It follows that (i) and (ii) are equivalent by Theorem \ref{thmbdy}.
\end{proof}
\section{Stabilizer subgroups and the intersection property}\label{main}
In this section, we establish a characterization of  the intersection property in terms of stabilizer subgroups.
\begin{defn} \label{defcha}
\emph{The Chabauty space of $\g$} is the set $\mathrm{Sub}(\g)$ of all subgroups in $\g$ with the relative topology of the product topology on $\{0, 1\}^\g $. 
\end{defn}
Note that a sequence $(\Lambda_i)_i$ of subgroup in $\g$ converges to a subgroup $\Lambda$ in the Chabauty topology if and only if it satisfies the following conditions.
\begin{enumerate}
     \item For every $t \in \Lambda$, one has $t \in \Lambda_i$ eventually. 
     \item For every subsequence $(\Lambda_{i_k})_k$ of $(\Lambda_i)_i$, one has $\bigcap_k  \Lambda_{i_k} \subset \Lambda$.  
\end{enumerate}

Let $X$ be a compact $\g$-space. We set the compact $\g$-space
\[
\cS(X, \g) = \{(x , \Lambda) \in X \times \mathrm{Sub}(\g) : \Lambda \leq \g_x \}
\]
with the relative topology of the product topology on $X \times \mathrm{Sub}(\g)$.
We consider the closed $\g$-invariant subspace of $\cS(X, \g)$, defined by
\[
{\cS}_a(X, \g) = \{(x , \Lambda) \in X \times \mathrm{Sub}(\g) : \Lambda \leq \g_x,\ \Lambda \ is \ amenable \}.
\]
We denote by $p_X$ the $\g$-equivariant continuous surjection from ${\cS}_a(X, \g)$ to $X$ defined by 
\[
p_X (x, \Lambda) = x
\]
for $(x, \Lambda) \in \cS_a(X, \g)$, hence $C(X) \subset C(\cS_a(X, \g))$ as a unital $\g$-$\Cst$-subalgebra. 
\begin{thm}\label{thmmain}
Let $X$ be a compact $\g$-space. Then the following are equivalent.
\begin{enumerate}
     \item The space $X$ has the intersection property. 
     \item For every closed $\g$-invariant set $Y$ in ${\cS}_a(X, \g)$ 
             such that $p_X(Y) = X$, the space $Y$ contains $X \times \{ e \}$.  
\end{enumerate}
\end{thm}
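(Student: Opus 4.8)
The plan is to deduce Theorem~\ref{thmmain} from the characterization of the intersection property via $\g$-morphisms to $C(\bX)$ established in Theorem~\ref{thmcond}, together with the structural facts about $\bX$ in Proposition~\ref{propbdy}. The bridge between the two pictures is the \emph{stability system}: a $\g$-morphism $\psi \colon \cross{X} \to C(\bX)$ restricting to $\id_{C(X)}$ produces, for each $y \in \bX$, a $\g_y$-invariant state $\psi_y := \mathrm{ev}_y \circ \psi$ on $\cross{X}$, and conversely, by $\g$-injectivity and rigidity, the data of such a morphism is governed by how the group elements act on the fibres over $X$. The key technical device is that for $y \in \bX$, the subgroup $\g_y$ is amenable (Proposition~\ref{propbdy}(iii)), so the pair $(q(y), \g_y)$ (with $q \colon \bX \to X$ the canonical map) is a point of $\cS_a(X,\g)$; this gives a $\g$-equivariant continuous map $\sigma \colon \bX \to \cS_a(X,\g)$, $\sigma(y) = (q(y), \g_y)$, with $p_X \circ \sigma = q$, and since $q$ is surjective, $p_X(\overline{\sigma(\bX)}) = X$.

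First I would prove the implication (ii)$\Rightarrow$(i) using Theorem~\ref{thmbdy}: it suffices to show $\bX$ is topologically free, i.e. $\g_y = \{e\}$ for all $y \in \bX$ (recall $\g_y = \g_y^\circ$ by Proposition~\ref{propbdy}). Set $Y = \overline{\sigma(\bX)} \subseteq \cS_a(X,\g)$, a closed $\g$-invariant set with $p_X(Y) = X$. By (ii), $Y \supseteq X \times \{e\}$. The hard part here is to upgrade this to the statement that $\sigma$ itself lands in $X \times \{e\}$, equivalently that every point of $\bX$ has trivial stabilizer; a priori $Y$ could contain $X\times\{e\}$ as a proper subset of the closure. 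I would argue as follows: if some $y_0 \in \bX$ had $\g_{y_0} \ne \{e\}$, pick $t \in \g_{y_0}\setminus\{e\}$; then $\mathrm{Fix}(t)$ is a nonempty clopen set in $\bX$ (Proposition~\ref{propbdy}), and one builds from it a proper closed $\g$-invariant subset $Z \subsetneq \bX$ with $q(Z) = X$, contradicting Proposition~\ref{propbdy}(ii) --- the argument being that the $\g$-morphism $C(\bX) \to C(\bX)$ compressing to the complement of the $\g$-orbit-closure of $\mathrm{Fix}(t)$ would have to be the identity by rigidity, which is absurd. In fact this last step is exactly the equivalence of (ii) and (iii) in Theorem~\ref{thmbdy}, so the real content of (ii)$\Rightarrow$(i) reduces to showing $Y$ must actually be all of $X\times\{e\}$, or rather: deriving a contradiction from (i) failing, i.e. from $\bX$ not topologically free, by producing a closed $\g$-invariant $Y\subseteq\cS_a(X,\g)$ with $p_X(Y)=X$ but $Y \not\supseteq X\times\{e\}$.

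For the converse (i)$\Rightarrow$(ii), suppose $X$ has the intersection property and let $Y \subseteq \cS_a(X,\g)$ be closed, $\g$-invariant, with $p_X(Y) = X$. The inclusion $C(X) \hookrightarrow C(Y)$ is then a unital $\g$-$\Cst$-embedding, so by $\g$-injectivity of $C(\bX)$ there is a $\g$-morphism $C(Y) \to C(\bX)$ restricting to $\id_{C(X)}$, hence (composing with the evaluation-type structure and using that $C(X)\subseteq C(\bX)$ is rigid) a $\g$-equivariant map $\bX \to Y$ lifting $q \colon \bX \to X$. Now for $y \in \bX$ with image $(x,\Lambda) \in Y$, $\g$-equivariance forces $\Lambda$ to be contained in $\g_y$; but $\bX$ is topologically free by Theorem~\ref{thmbdy} (since $X$ has the intersection property), so $\g_y = \{e\}$, whence $\Lambda = \{e\}$ and $(x, \{e\}) \in Y$. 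As $q$ is surjective, $x$ ranges over all of $X$, giving $X \times \{e\} \subseteq Y$.

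I expect the main obstacle to be the direction (ii)$\Rightarrow$(i), specifically the construction in the contrapositive: given that $\bX$ fails to be topologically free, one must manufacture a \emph{closed} $\g$-invariant subset $Y$ of $\cS_a(X,\g)$ with $p_X(Y) = X$ yet missing some point $(x_0, e)$. The natural candidate is the orbit-closure $\overline{\{(q(y), \g_y) : y \in \mathrm{Fix}(t)\}}$ pushed around by $\g$, or an intersection of $p_X$-preimages of a minimal subsystem; one has to check simultaneously that it surjects onto $X$ (which uses that $\mathrm{Fix}(t)$, being clopen and $\g_y = \g_y^\circ$, has $q(\mathrm{Fix}(t))$ large --- possibly needing a minimality or averaging argument, or again Proposition~\ref{propbdy}(ii) applied to a sub-boundary) and that it genuinely omits points of $X\times\{e\}$ (which uses $t \ne e$ acting nontrivially). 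Carefully reconciling these two requirements, and in particular handling the case where $\mathrm{Fix}(t)$ does not already map onto $X$, is where the technical work lies; the morphism lemma of Section~\ref{morph} (Theorem~\ref{thmcond}) is designed precisely to sidestep part of this by reducing everything to statements about conditional expectations and rigidity, so I would lean on it as heavily as possible rather than argue directly with invariant subsets.
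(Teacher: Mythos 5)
Your overall strategy (route everything through $\bX$, Theorem \ref{thmbdy}, Theorem \ref{thmcond}, Proposition \ref{propbdy}, and the stabilizer map $y \mapsto (q(y),\g_y)$) is the paper's strategy, but both implications as you have written them contain genuine gaps. For (i)$\Rightarrow$(ii): a $\g$-morphism $\mu \colon C(Y) \to C(\bX)$ extending $\id_{C(X)}$ exists by injectivity, but it is only a unital completely positive map; it does \emph{not} yield a continuous $\g$-equivariant point map $\bX \to Y$. Evaluation at $\tilde{x} \in \bX$ gives a probability measure on the fibre $Y_{q(\tilde{x})}$, not a point, and rigidity cannot be invoked to make $\mu$ multiplicative because there is no natural map $C(\bX) \to C(Y)$ to compose with. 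Moreover, even if an equivariant lift $y \mapsto (q(y),\Lambda_y)$ existed, equivariance only gives $\Lambda_{ty} = t\Lambda_y t^{-1}$; it does not force $\Lambda_y \le \g_y$, so freeness of $\bX$ would not let you conclude $\Lambda_y = \{e\}$. The paper's actual argument works with the measures: it introduces the $\g$-morphism $\theta \colon \cross{X} \to C(Y)$, $\theta(f\lambda_t)(x,\Lambda) = f(x)$ if $t \in \Lambda$ and $0$ otherwise, and shows that if some fibre $Y_x$ misses $\{e\}$ then $\mu\circ\theta \ne E_X$: were they equal, the fibre measure $\mu_x$ (whose support lies in $\{x\}\times Y_x$) would assign measure $0$ to $\{\Lambda : t\in\Lambda\}$ for every $t\ne e$, forcing $\mu_x = \delta_{\{e\}}$ by countability of $\g$, a contradiction; then Theorem \ref{thmcond} kills the intersection property. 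The element $\lambda_t$ of the crossed product is exactly what is needed to test membership of $t$ in $\Lambda$, and no argument purely on the level of spaces and point maps replaces it.

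For (ii)$\Rightarrow$(i) you explicitly leave the construction unfinished, and the candidates you propose (orbit closures of the image of $\mathrm{Fix}(t)$, preimages of minimal subsystems) create the surjectivity problem you worry about. The paper's choice avoids it entirely: take $Y = \Phi(\bX)$ where $\Phi(y) = (q(y),\g_y)$, using \emph{all} of $\bX$. This $\Phi$ is continuous because each $\mathrm{Fix}(t)$ is clopen, lands in $\cS_a(X,\g)$ because each $\g_y$ is amenable (Proposition \ref{propbdy}), and $p_X(\Phi(\bX)) = q(\bX) = X$ automatically. If (i) fails, then $\Phi(\bX) \not\supseteq X\times\{e\}$: otherwise $Z = \{y \in \bX : \g_y = \{e\}\}$ would be a closed $\g$-invariant set with $q(Z) = X$, hence $Z = \bX$ by Proposition \ref{propbdy}(ii), i.e.\ $\bX$ would be free, contradicting Theorem \ref{thmbdy}. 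So the witness for the failure of (ii) is the stabilizer image itself, and no reconciliation of ``surjects onto $X$'' with ``omits $(x_0,e)$'' beyond Proposition \ref{propbdy}(ii) is needed. You had all the ingredients on the table (you even wrote down $\sigma = \Phi$ and noted $p_X(\overline{\sigma(\bX)}) = X$), but the two steps above are the actual content of the theorem and are missing or incorrect in the proposal.
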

\begin{proof}
Suppose that $X$ does not have the intersection property.  
We denote by $q$ the $\g$-equivariant continuous surjection from $\bX$ to $X$.
We define a $\g$-equivariant continuous map $\Phi$ from $\bX$ to ${\cS}_a(X, \g)$ by $\Phi(y) = (q(y), \g_y)$ for $y \in \bX$. 
We claim that $\Phi (\bX) \not \supset X \times \{e\}$, which means that (ii) is not true.
Otherwise, the closed $\g$-invariant set $Z :=\{y \in \bX : \g_y = \{e\} \}$ satisfies that $q(Z) =\bX$, therefore we have $Z = X$ by Proposition \ref{propbdy}. Since $X$ does not have the intersection property, the space $\bX$ is not free by Theorem \ref{thmbdy}, a contradiction.

On the other hand, let $Y$ be a closed $\g$-invariant set in ${\cS}_a(X, \g)$ 
such that $p_X(Y) = X$ and $Y \not\supset X \times \{ e \}$. 
There is a $\g$-morphism $\theta$ from $\cross{X}$ to $C(Y)$, defined by
\[
\theta (f \lambda_t) (x, \Lambda)=
\begin{cases}
f(x) & t \in \Lambda  \\
0    & t \not\in \Lambda
\end{cases}
.\]
There is a $\g$-morphism $\mu$ from $C(Y)$ to $C(\bX)$ which is the identity map on $C(X)$ 
since $C(\bX)$ is $\g$-injective. 
We show that $\mu \circ \theta \not= E_X$.
Suppose that $\mu \circ \theta = E_X$.
We claim that for every $x \in X$, one has $p_X^{-1}(x) \cap Y = \{x \} \times Y_x$ for a $Y_x \subset \mathrm{Sub}(\g)$. 
Since $Y \not\supset X \times \{ e\}$, there is a point $x$ in $X$ 
such that $Y_x \not\ni \{e\}$. 
Let $\tilde{x}$ be a point in $\bX$ such that $q(\tilde{x}) = x$.
We observe that the support of $\mathrm{ev}_{\tilde{x}} \circ \mu$ is contained in $\{x\} \times Y_x$. 
Indeed, let $f$ be a function in $C(Y)$ such that $f|_{\{x\} \times Y_x}=0$. 
For every open neighborhood $U$ of $x$, we take a continuous function $h_U$ on $X$ such that 
$0 \le h_U \le 1$,
the support of $h_U$ contained in $U$ 
and $h_U(x) =1$. 
We denote by $\tilde{h}_U$ the function $h_U \circ q$.
For every $\varepsilon > 0$, there is a neighborhood $U$ of $x$ such that $|f (y)| < \varepsilon$ for every $y \in p_X^{-1} (U) \cap Y$, hence we obtain
\[
  |\mathrm{ev}_{\tilde{x}} \circ \mu (f)| 
=|\mathrm{ev}_{\tilde{x}} \circ \mu  (f) \cdot \tilde{h}_U (\tilde{x})|
=|\mathrm{ev}_{\tilde{x}} \circ \mu (f\tilde{h}_U)| \le \| f\tilde{h}_U\| \le \varepsilon. 
\] 
Therefore  $\mathrm{ev}_{\tilde{x}} \circ \mu (f) = 0$, which implies that $\mathrm{supp}( \mathrm{ev}_{\tilde{x}} \circ \mu) \subset \{x\} \times Y_x$. 
We denote by $\mu_x$ the Radon probability measure on $Y_x$ 
which is the restriction of $\mathrm{ev}_{\tilde{x}} \circ \mu$ on $Y_x$.
Then, for any $t \in \g$, we obtain the following equation.
\begin{eqnarray}
\mu_x \{ \Lambda \in Y_x : t \in \Lambda\} \nonumber
&=& \mu_x (\theta(\lambda_t)|_{Y_x} ) \nonumber \\
&=& \mathrm{ev}_{\tilde{x}} \circ \mu \circ \theta (\lambda_t) \nonumber \\
&=& 
\begin{cases}
1 & (t = e) \\
0 & (t \not= e)
\end{cases} .
\nonumber
\end{eqnarray}
It contradicts that $Y_x \not\ni \{e\}$.
\end{proof}

\begin{thm}
Let $X$ be a compact $\g$-space. The following are equivalent.
\begin{enumerate}
    \item Every $\g$-invariant closed set in $X$ has the intersection property.
    \item For every point $x$ in $X$ and every amenable subgroup $\Lambda$ in $\g_x$,
            there is a net $(g_i)$ in $\g$ such that $(g_i x)$ converges to $x$ and 
            $(g_i \Lambda g^{-1}_i)$ converges to $\{e\}$ in the Chabauty topology.
\end{enumerate}
\end{thm}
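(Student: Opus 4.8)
The plan is to deduce this theorem from Theorem \ref{thmmain} by relating the ``closed $\g$-invariant subsets $Y$ of $\cS_a(X,\g)$ with $p_X(Y)=X$'' to the dynamical convergence condition in (ii). First I would recast condition (ii): saying that for $(x,\Lambda)\in\cS_a(X,\g)$ there is a net $(g_i)$ with $g_ix\to x$ and $g_i\Lambda g_i^{-1}\to\{e\}$ in the Chabauty topology is precisely the statement that the point $(x,\{e\})$ lies in the orbit closure $\overline{\g\cdot(x,\Lambda)}$ inside $\cS_a(X,\g)$ --- because $g_i\cdot(x,\Lambda)=(g_ix,g_i\Lambda g_i^{-1})$, and convergence in the product space $X\times\mathrm{Sub}(\g)$ is exactly joint convergence of the two coordinates. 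So (ii) of this theorem is equivalent to: for every $(x,\Lambda)\in\cS_a(X,\g)$ one has $(x,\{e\})\in\overline{\g\cdot(x,\Lambda)}$.

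For the implication (ii)$\Rightarrow$(i): let $Z\subseteq X$ be a closed $\g$-invariant set; I must show $Z$ has the intersection property, and I will do this via Theorem \ref{thmmain} applied to $Z$. Note $\cS_a(Z,\g)=\cS_a(X,\g)\cap(Z\times\mathrm{Sub}(\g))$ and $\g_x$ (computed in $Z$) equals $\g_x$ (computed in $X$) for $x\in Z$, so the condition in (ii) is inherited by $Z$. Now take any closed $\g$-invariant $Y\subseteq\cS_a(Z,\g)$ with $p_Z(Y)=Z$. Pick any $(x,\Lambda)\in Y$ (with $x\in Z$, $\Lambda$ amenable). By the orbit-closure reformulation, $(x,\{e\})\in\overline{\g\cdot(x,\Lambda)}\subseteq Y$ since $Y$ is closed and $\g$-invariant. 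As $x$ ranges over $p_Z(Y)=Z$ (and $Y$ always contains a point over each $x\in Z$ by surjectivity of $p_Z$), we get $Z\times\{e\}\subseteq Y$. Hence condition (ii) of Theorem \ref{thmmain} holds for $Z$, so $Z$ has the intersection property.

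For the converse (i)$\Rightarrow$(ii): fix $x\in X$ and an amenable $\Lambda\le\g_x$; I want $(x,\{e\})\in\overline{\g\cdot(x,\Lambda)}$. Set $Z=\overline{\g x}$, a closed $\g$-invariant subset of $X$, which by (i) has the intersection property. Let $Y=\overline{\g\cdot(x,\Lambda)}\subseteq\cS_a(Z,\g)$; this $Y$ is closed and $\g$-invariant, and $p_Z(Y)\supseteq\overline{\g x}=Z$ (the image of $Y$ under the continuous map $p_Z$ is closed, $\g$-invariant, and contains $\g x$), so $p_Z(Y)=Z$. By Theorem \ref{thmmain} applied to $Z$, $Y\supseteq Z\times\{e\}\ni(x,\{e\})$, which is exactly what (ii) asserts for this $(x,\Lambda)$.

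The main obstacle I anticipate is bookkeeping rather than depth: one must be careful that $\cS_a$ is functorial in the right way under passing to closed invariant subspaces (that $\g_x$ is unambiguous, that $p_X$ restricts correctly), and that the orbit-closure description of Chabauty convergence is stated for nets, not just sequences --- which matches the net formulation in (ii) and causes no trouble, whereas the sequential version would (this is presumably why the minimal-case theorem, where a diagonal/second-countability argument lets one pass to sequences, is stated separately). A minor point to verify cleanly is that $\overline{\g\cdot(x,\Lambda)}$ indeed lies in $\cS_a(X,\g)$, i.e.\ that the amenability constraint is preserved under Chabauty limits of conjugates of a fixed amenable group; this holds because a Chabauty limit of amenable subgroups is amenable (being locally approximable by them on finite sets, or directly: the limit embeds into an ultraproduct-type object, or more elementarily every finitely generated subgroup of the limit lies in some conjugate $g_i\Lambda g_i^{-1}$), so $\cS_a(X,\g)$ is genuinely closed and the argument goes through.
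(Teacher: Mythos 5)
Your proof is correct and follows essentially the same route as the paper: both directions reduce to Theorem \ref{thmmain} via the observation that condition (ii) is exactly the orbit-closure statement $(x,\{e\})\in\overline{\g\cdot(x,\Lambda)}$ in $\cS_a(\cdot,\g)$, applied to an arbitrary closed invariant $Z$ in one direction and to $Z=\overline{\g x}=p_X(\overline{\g\cdot(x,\Lambda)})$ in the other. Your extra checks (closedness of $\cS_a$ under Chabauty limits, compatibility of stabilizers and $p_X$ with passing to closed invariant subsets) are correct and only make explicit what the paper leaves implicit.
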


\begin{proof}
Suppose that (ii) is true.
Let $Z$ be a $\g$-invariant closed subset of  $X$. 
Then, for any $(z, \Lambda) \in \cS(Z, \g)$, we have $\overline{\g (z, \Lambda)} \ni (z, \{e\})$. 
This implies that for every $\g$-invariant closed subset $Y$ in $\cS_a(Z, \g)$, one has $Z \times \{e\} \subset Y$, therefore $Z$ has the intersection property by Theorem \ref{thmmain}.

Conversly, suppose that (i) is true.
Let $x$ be a point in $X$ and $\Lambda$ be an amenable subgroup in $\g_x$.
Then, we have $\overline{\g (x, \Lambda)} \supset p_X \left(\overline{\g (x, \Lambda)} \right) \times \{ e\}$ by Theorem \ref{thmmain}. 
In particular, there is a net $(g_i)$ in such that $(g_i x)$ converges to $x$ and $(g_i \Lambda g^{-1}_i)$ converges to $\{e\}$.
\end{proof}
\section{Minimal case}\label{min}
We consider the case where the compact $\g$-space $X$ is \emph{minimal}, i.e.\ there are no non-trivial closed $\g$-invariant subspaces in $X$. 
Equivalently, there are no non-trivial $\g$-invariant closed ideals in $C(X)$.
For a minimal compact $\g$-space $X$, the space ${\bX}$ is also minimal by Proposition \ref{propbdy} (ii).

We claim that for a minimal compact $\g$-space $X$, the reduced crossed product $\cross{X}$ is simple if and only if $X$ has the intersection property. 
since for every ideal $I$ in $\cross{X}$, $C(X) \cap I$ is $\g$-invariant ideal.
 
\begin{thm}\label{corsim}
Let $X$ be a minimal compact $\g$-space. The following are equivalent.
\begin{enumerate}
    \item The reduced crossed product $\cross{X}$ is simple.
    \item For every point $x$ in $X$ and every amenable subgroup $\Lambda$ in $\g_x$,
            there is a sequence $(g_i)$ in $\g$ such that
            $(g_i \Lambda g^{-1}_i)$ converges to $\{e\}$ in the Chabauty topology.
    \item There is a point $x$ in $X$ such that for every amenable subgroup $\Lambda$ in  
            $\g_x$, there is a sequence $(g_i)$ in $\g$ such that
            $(g_i \Lambda g^{-1}_i)$ converges to $\{e\}$ in the Chabauty topology.
\end{enumerate}
\end{thm}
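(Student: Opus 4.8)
The plan is to reduce, using minimality, to the intersection property, and then to carry the single‑point hypothesis~(iii) into the $\g$‑injective envelope $\bX$. First I would record that, for minimal $X$, the reduced crossed product $\cross X$ is simple exactly when $X$ has the intersection property (recalled just above), and that the only $\g$‑invariant closed subsets of a minimal $X$ are $\emptyset$ and $X$, both of which have the intersection property as soon as $X$ does. Consequently Theorem~\ref{thm1} turns condition~(i) into the statement $(\ast)$: for every $x\in X$ and every amenable $\Lambda\le\g_x$ there is a net $(g_i)$ in $\g$ with $g_ix\to x$ and $g_i\Lambda g_i^{-1}\to\{e\}$ in the Chabauty topology. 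It then suffices to run the cycle $(\ast)\Rightarrow(\mathrm{ii})\Rightarrow(\mathrm{iii})\Rightarrow(\ast)$.

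For $(\ast)\Rightarrow(\mathrm{ii})$, I would fix $x$ and an amenable $\Lambda\le\g_x$; condition $(\ast)$ places $\{e\}$ in the Chabauty closure of the countable set $\{g\Lambda g^{-1}:g\in\g\}$, and since $\g$ is countable $\mathrm{Sub}(\g)$ is compact metrizable, so a point of that closure is the limit of a genuine \emph{sequence} $g_i\Lambda g_i^{-1}\to\{e\}$, which is~(ii). The step $(\mathrm{ii})\Rightarrow(\mathrm{iii})$ is immediate: take any point of $X$.

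For $(\mathrm{iii})\Rightarrow(\ast)$ --- the substantive part --- I would not verify $(\ast)$ directly but instead prove that $X$ has the intersection property via Theorem~\ref{thmbdy}, by showing that $\bX$ is topologically free. The tools, all from Proposition~\ref{propbdy}: $\bX$ is minimal (as $X$ is); $\g_y$ is amenable for every $y\in\bX$; and $\mathrm{Fix}(t)\subseteq\bX$ is clopen for every $t$, so the stabilizer map $s\colon\bX\to\mathrm{Sub}(\g)$, $s(y)=\g_y$, is continuous and $\g$‑equivariant (its coordinate functions are the indicators of the clopen sets $\mathrm{Fix}(t)$). Now pick $x_0$ as in~(iii), choose $\tilde x_0\in q^{-1}(x_0)$ (nonempty, as $q$ is surjective), and set $\Lambda=\g_{\tilde x_0}$; then $\Lambda$ is amenable and, by $\g$‑equivariance of $q$, contained in $\g_{x_0}$, so~(iii) furnishes a sequence $(g_i)$ with $g_i\Lambda g_i^{-1}\to\{e\}$. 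Passing to a subnet with $g_{i_j}\tilde x_0\to\tilde y$ in the compact space $\bX$, continuity of $s$ yields $\g_{\tilde y}=\lim_j s(g_{i_j}\tilde x_0)=\lim_j g_{i_j}\Lambda g_{i_j}^{-1}=\{e\}$ (Hausdorffness of $\mathrm{Sub}(\g)$). Hence $\{\tilde x\in\bX:\g_{\tilde x}=\{e\}\}=\bigcap_{t\ne e}(\bX\setminus\mathrm{Fix}(t))$ is a nonempty closed $\g$‑invariant subset of the minimal space $\bX$, so it is all of $\bX$; thus $\bX$ is topologically free, whence $X$ has the intersection property by Theorem~\ref{thmbdy}, and therefore~$(\ast)$ (equivalently~(i)) holds.

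I expect the main obstacle to be exactly the transfer in $(\mathrm{iii})\Rightarrow(\ast)$: the hypothesis at the single point $x_0$ cannot be moved around inside $X$ --- stabilizers in $X$ need be neither amenable nor semicontinuous --- so one must pass to $\bX$, where stabilizers are amenable, the fixed‑point sets are clopen (making the stabilizer map honestly continuous), and minimality promotes one trivial‑stabilizer point to all of $\bX$. The only other point needing care is the interplay between nets and sequences: $\bX$ may fail to be metrizable while $\mathrm{Sub}(\g)$ is compact metrizable, and that is precisely what lets me replace the net in~$(\ast)$ by the sequence required in~(ii).
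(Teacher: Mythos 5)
Your proof is correct, but the route differs from the paper's in the substantive implication. The paper never returns to $\bX$ in this proof: it works entirely inside $\cS_a(X,\g)$, showing that the negation of (ii) (equivalently of (iii)) amounts to the existence of a pair $(x,\Lambda)$ with $\{e\}\notin\overline{\mathrm{Ad}(\g)\Lambda}$, hence of a minimal $\g$-invariant subset $Y\subset\cS_a(X,\g)$ with $Y\neq X\times\{e\}$ (take a minimal component of $\overline{\g(x,\Lambda)}$; conversely, if such a minimal $Y$ exists, any $(x,\Lambda)\in Y$ has $\{e\}\notin\overline{\mathrm{Ad}(\g)\Lambda}$, since otherwise $(y,\{e\})\in Y$ for some $y$ and minimality of $X$ and of $Y$ force $Y=X\times\{e\}$), and then invokes Theorem~\ref{thmmain} directly, using that $p_X(Y)=X$ automatically when $X$ is minimal. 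You instead split the work: (i)$\Leftrightarrow(\ast)$ via Theorem~\ref{thm1} plus the metrizability of $\mathrm{Sub}(\g)$ to upgrade nets to sequences (a point the paper leaves implicit), and for (iii)$\Rightarrow$(i) you go back into the injective envelope, using Proposition~\ref{propbdy} (amenability of $\g_y$, clopenness of $\mathrm{Fix}(t)$, hence continuity and equivariance of the stabilizer map), minimality of $\bX$, and Theorem~\ref{thmbdy}: the stabilizer $\g_{\tilde x_0}$ of a lift of the distinguished point is an amenable subgroup of $\g_{x_0}$, so (iii) produces a point of $\bX$ with trivial stabilizer, and minimality promotes freeness to all of $\bX$. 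Both arguments rest on the same cluster of results; yours is a clean shortcut that exhibits freeness of $\bX$ explicitly and makes the single-point hypothesis (iii) work without the minimal-component argument, whereas the paper's argument stays at the level of $\cS_a(X,\g)$ and in passing shows that failure of the conjugation condition at one stabilizer pair propagates to every point of $X$. The one step worth double-checking in your write-up — replacing the net in $(\ast)$ by a sequence — is indeed justified exactly as you say, since $\mathrm{Sub}(\g)\subset\{0,1\}^{\g}$ is compact metrizable for countable $\g$.
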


\begin{proof}
We show that  (ii) and (iii) are equivalent to existence of a minimal $\g$-invariant subspace $Y$ in $\cS_a(X, \g)$ such that $Y \not = X \times \{e \}$. This implies the desired equivalence by Theorem \ref{thmmain}. Note that  for every $\g$-invariant subspace $Z$ of $X$, we have $p_X(Z) = X$ since $X$ is minimal.

Suppose that there is a minimal $\g$-invariant subspace $Y$ in $\cS_a(X, \g)$ such that $X \times \{e\} \not = Y$.
Let $(x ,\Lambda)$ be an element in $Y$. 
We claim that $\overline{\mathrm{Ad}(\g)\Lambda} \not \ni \{ e\}$, 
which means that (i) and (ii) are not true. 
Otherwise, there is a net $(g_i)$ in $\g$ such that $g_i \Lambda g_i^{-1} \to \{e\}$. 
We may assume that $g_i x \to y$ for a point $y \in X$. 
Then we have $g_i (x, \lambda) \to (y, \{e\})$, this implies that $Y \supset X \times \{e\}$.
By minimality of $Y$, we obtain $Y = X \times \{e\}$, a contradiction.

Next, we show the converse. Suppose that there is an elemant $(x, \Lambda)$ in $\cS_a(X,\g)$ such that $\overline{\mathrm{Ad}(\g)\Lambda} \not \ni \{ e\}$. Then we have $\overline{\g(x, \Lambda)} \cap X\times \{e\} = \emptyset$, hence there is a  minimal $\g$-invariant subspace $Y$ in $\cS_a(X,\g)$ such that $Y \not = X \times \{e\}$ (take a minimal component of $\overline{\g(x, \Lambda)}$).
\end{proof}

We also characterize simplicity for reduced crossed products in terms uniformly recurrent subgroups (Glasner--Weiss \cite{gw}). 

\begin{defn}
A subset $\cU$ of $\mathrm{Sub}(\g)$ is called a \emph{uniformly recurrent subgroup (URS)} of $\g$ if $\cU$ is a minimal closed subset of the Chabauty space $\mathrm{Sub}(\g)$. 
A URS $\cU$ is \emph{amenable} if any subgroup contained in $U$ is amenable.
\end{defn}

\begin{defn} \label{defstb}
For a compact $\g$-space $X$, we define the subspace $\cS_X$ of $\mathrm{Sub}(\g)$ as the closure of the set $\{\g_x : x \in X, \ \g_x = \g_x^{\circ} \}$. We call $\cS_X$ by \emph{stability system} of $X$.
If $X$ is minimal, the set $\cS_X$ is a URS (\cite[Proposition 1.4]{gw}).
\end{defn}

For a normal subgroup $N$ in $\g$, the singleton $\{N\}$ in $\mathrm{Sub}(\g)$ is a URS. 
By \cite[Theorem 4.1]{kennedy}, $\Cst$-simplicity of $\g$ is equivalent to absence of non-trivial amenable URS's.
There is a non-$\Cst$-simple countable group which has no non-trivial normal amenable subgroup, e.g.\ given by Le Boudec \cite{leboudec}.
This implies that there is a countable group which admits a non-singleton amenable URS. 

We define an partial order of the set of uniformly recurrent subgroups. Let $\cU$ and $\cV$ be URS's, we denote $\cU \preccurlyeq \cV$ if there exist $H \in \cU$ and $K \in \cV$ such that $H \leq K$. Note that $\cU \preccurlyeq \cV$ if and only if
for every $H \in \cU$, there exists $K \in \cV$ such that $H \le K$ (see \cite[Propostiton 2.14]{lm}).

\begin{cor}
Let $X$ be a minimal compact $\g$-space. The following are equivalent.
\begin{enumerate}
    \item The reduced crossed product $\cross{X}$ is simple.
    \item For any non-trivial amenable URS $\cU$, one has $\cU \not \preccurlyeq \cS_X$.
\end{enumerate}
\end{cor}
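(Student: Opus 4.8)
The plan is to deduce this corollary directly from Theorem~\ref{corsim} by matching its condition (ii) against the statement ``$\cU\not\preccurlyeq\cS_X$ for every non-trivial amenable URS $\cU$''. Recall that for minimal $X$ the stability system $\cS_X$ is a URS, and the subgroups appearing in $\cS_X$ are precisely the stabilizers $\g_x$ together with their limits in the Chabauty topology (here $\g_x=\g_x^\circ$ is not automatic, but $\overline{\{\g_x:x\in X,\ \g_x=\g_x^\circ\}}$ is what we work with; I will need to check that $\cS_X$ still records enough of the stabilizer data, or else argue directly with stabilizers). The key translation is: condition (ii) of Theorem~\ref{corsim} says exactly that for every $x\in X$ and every amenable $\Lambda\le\g_x$, the point $\{e\}$ lies in the closure of the conjugation orbit $\overline{\mathrm{Ad}(\g)\Lambda}$ (which, since $X$ is minimal and by the argument in the proof of Theorem~\ref{corsim}, is the relevant reformulation).

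First I would prove the implication (i)$\Rightarrow$(ii) of the corollary contrapositively: suppose $\cU$ is a non-trivial amenable URS with $\cU\preccurlyeq\cS_X$. By definition of $\preccurlyeq$ there are $H\in\cU$ and $K\in\cS_X$ with $H\le K$; since $\cS_X$ is the closure of stabilizer subgroups and $K$ is a Chabauty limit of such, I would first reduce to the case $K=\g_x$ for some $x$ (using that $\cS_X$ is a URS and the order is witnessed on a dense, hence on every, point — invoking \cite[Proposition 2.14]{lm}). Then $H\le\g_x$ is amenable and non-trivial. Because $\cU$ is minimal and non-trivial, every conjugate $gHg^{-1}$ again belongs to $\cU$, and no element of $\cU$ equals $\{e\}$ (else minimality would force $\cU=\{\{e\}\}$, contradicting non-triviality); in particular $\{e\}\notin\overline{\mathrm{Ad}(\g)H}\subseteq\cU$. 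This exhibits an amenable $\Lambda:=H\le\g_x$ with $\overline{\mathrm{Ad}(\g)\Lambda}\not\ni\{e\}$, so condition (ii) of Theorem~\ref{corsim} fails, whence $\cross{X}$ is not simple.

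For the converse (ii)$\Rightarrow$(i) of the corollary, again contrapositively: if $\cross{X}$ is not simple, Theorem~\ref{corsim} gives $x\in X$ and an amenable $\Lambda\le\g_x$ with $\overline{\mathrm{Ad}(\g)\Lambda}\not\ni\{e\}$. I would take $\cU$ to be a minimal component of the closed $\g$-invariant set $\overline{\mathrm{Ad}(\g)\Lambda}\subseteq\mathrm{Sub}(\g)$; this $\cU$ is a URS, it is amenable (every subgroup in it is a Chabauty limit of conjugates of the amenable group $\Lambda$, and amenability of subgroups passes to Chabauty limits since a Chabauty-limit subgroup is a directed union of subgroups of the approximants — or more directly, the set of amenable subgroups, viewed appropriately, behaves well; I will cite the standard fact that an amenable URS is closed under this operation, as used implicitly around \cite{kennedy}), and it is non-trivial because $\{e\}\notin\overline{\mathrm{Ad}(\g)\Lambda}\supseteq\cU$. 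Finally $\cU\preccurlyeq\cS_X$: pick any $H\in\cU\subseteq\overline{\mathrm{Ad}(\g)\Lambda}$; then $H$ is a Chabauty limit $\lim_i g_i\Lambda g_i^{-1}$, and passing to a subnet so that $g_i x\to y\in X$ we get $H\le\g_y$, and since $y$ lies in the minimal (hence full) space $X$, by minimality $\g_y$ or a Chabauty limit of conjugates of it lies in $\cS_X$, giving the required $H\le K$ with $K\in\cS_X$.

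The main obstacle I anticipate is the bookkeeping around $\cS_X$ versus the raw stabilizer map: $\cS_X$ is defined using only points with $\g_x=\g_x^\circ$ and then taking closure, whereas Theorem~\ref{corsim} quantifies over all $x\in X$ and all amenable $\Lambda\le\g_x$. Reconciling these requires showing that the amenable subgroups dominated by genuine stabilizers $\g_x$ are, up to the order $\preccurlyeq$, exactly those dominated by members of $\cS_X$; one direction needs the upper-semicontinuity of the stabilizer map $x\mapsto\g_x$ (so that $\g_y\subseteq\lim$ of nearby $\g_{x}$), and the other needs that $\cS_X\subseteq\overline{\{\g_x:x\in X\}}$ in $\mathrm{Sub}(\g)$. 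I would handle this cleanly by first establishing the auxiliary claim: for minimal $X$, and any amenable $\Lambda\le\g_x$ for some $x$, one has $\overline{\mathrm{Ad}(\g)\Lambda}\ni\{e\}$ \emph{if and only if} $\{e\}$ is a Chabauty limit of conjugates of some member of $\cS_X$ above $\Lambda$ — after which the corollary is a formal consequence of Theorem~\ref{corsim} and the definition of $\preccurlyeq$ together with \cite[Proposition 2.14]{lm}.
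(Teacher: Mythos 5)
Your contrapositive of (i)$\Rightarrow$(ii) is essentially the paper's own argument and is fine in substance: from $\cU\preccurlyeq\cS_X$ you get $H\in\cU$ and $K\in\cS_X$ with $H\le K$; the reduction should be to $K\le\g_x$ (not $K=\g_x$) for some $x$, obtained by writing $K$ as a Chabauty limit of stabilizers $\g_{x_i}$ with $x_i\to x$ and using upper semicontinuity, which is all you need; then minimality and non-triviality of $\cU$ give $\{e\}\notin\overline{\mathrm{Ad}(\g)H}\subset\cU$, contradicting condition (ii) of Theorem \ref{corsim}.

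The genuine gap is the last step of your (ii)$\Rightarrow$(i) direction. Producing $\Lambda\le\g_x$ amenable with $\{e\}\notin\overline{\mathrm{Ad}(\g)\Lambda}$ and taking $\cU$ a minimal component of $\overline{\mathrm{Ad}(\g)\Lambda}$ is correct (amenability does pass to Chabauty limits, and non-triviality is clear), but you still must prove $\cU\preccurlyeq\cS_X$, and your argument only yields $H\le\g_y$ for some $H\in\cU$, $y\in X$. Since $\cS_X$ is by definition the closure of $\{\g_z:\g_z=\g_z^{\circ}\}$, the stabilizer $\g_y$ need not belong to $\cS_X$, so ``giving the required $H\le K$ with $K\in\cS_X$'' does not follow as written; the ``auxiliary claim'' you defer to is exactly the missing content and is left unproven. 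To close the gap you need (a) that the set $X_0=\{z\in X:\g_z=\g_z^{\circ}\}$ of continuity points of $z\mapsto\g_z$ is dense (Glasner--Weiss/Baire, using that $\mathrm{Sub}(\g)$ is metrizable), so that by minimality of $X$ there is a net $(h_j)$ with $h_jy\to z_0\in X_0$; and (b) after passing to a subnet with $h_jHh_j^{-1}\to H'$ and $\g_{h_jy}\to L$, the inclusions $H'\le L\le\g_{z_0}\in\cS_X$ together with $H'\in\cU$ (closed and invariant) give $\cU\preccurlyeq\cS_X$ --- note you must replace $H$ by a limit of its conjugates; only afterwards does \cite[Proposition 2.14]{lm} recover a dominating $K$ for the original $H$. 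The paper avoids this issue entirely by taking $\cU=\cS_{\bX}$: Theorem \ref{thmbdy} and Proposition \ref{propbdy} show $\cS_{\bX}$ is a non-trivial amenable URS, and $\cS_{\bX}\preccurlyeq\cS_X$ is immediate by choosing $z_0\in X_0$ and any $y\in\bX$ above it, since $\g_y\le\g_{z_0}$. Either route works, but as it stands your domination step is not a proof.
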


\begin{proof}
If $\cross{X}$ is not simple, $\bX$ is not topologically free by Theorem \ref{thmbdy}. Hence $\cS_{\bX}$ is a non-trivial amenable URS and $\cS_{\bX} \preccurlyeq \cS_X$. 

We show the converse. Suppose that $\cross{X}$ is simple. 
Let $\cU$ be an amenable URS such that $\cU \preccurlyeq \cS_X$. 
For $\Lambda \in \cU$, there is a point $x \in X$ such that $\Lambda \le \g_x$, this implies that $\overline{\mathrm{Ad}(\g) \Lambda} \ni \{e\}$ by Theorem \ref{corsim}.
Therefore we have $\cU = \{e\}$ by minimality of $\cU$.
\end{proof}
\section{Strongly proximallity and amenable URS's} \label{str}
\begin{defn}
Let $X$ be a compact $\g$-space. $X$ is \emph{strongly proximal} if for every Radon probability measure $\mu$ on $X$, the $\mathrm{weak}^\ast$-closure of $\g \mu$ contains a point mass.
$X$ is called a \emph{$\g$-boundary} if $X$ is minimal and strongly proximal.
\end{defn}
It is known that the Hamana boundary ${\partial}_H \g$ is a $\g$-boundary (Kalantar--Kennedy \cite{kk}).
In this section, we proof an analogous property for every compact $\g$-space $X$.

We denote by $\bX_z$ the inverse image of a point $z \in X$ under the $\g$-equivariant continuous surjection from $\bX$ to $X$. For any compact Hausdorff space $Y$, we denote by $\cM (Y)$\ the set of all Radon probability measures on $Y$.

\begin{thm}\label{thmprox}
Let $X$ be a compact $\g$-space and 
$Z$ be a subset in $X$ such that $\g Z$ is dense in $X$. 
Then for every family $\{ \mu_z \}_{z \in Z}$ such that $\mu_z \in \cM (\bX_z)$, 
the space $\bX$ is contained in the $weak^\ast$-closure of $\{t \mu_z : t \in \g , \ z \in Z \}$ in $\cM (\bX)$. 
\end{thm}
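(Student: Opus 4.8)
The plan is to exhibit $C(\bX)$ as an injective object that receives a $\g$-morphism built from the family $\{\mu_z\}$, and then use rigidity to show this morphism is the identity; strong proximality of the resulting orbit follows by pushing forward this identity through point evaluations. More concretely, first I would assemble the measures $\mu_z$ into a single map: for each $z \in Z$ the measure $\mu_z \in \cM(\bX_z)$ gives a state $\mathrm{ev}_{\mu_z}$ on $C(\bX)$ (integration against $\mu_z$), and since $\mu_z$ is supported on the fibre over $z$, the composition $C(X) \hookrightarrow C(\bX) \xrightarrow{\mathrm{ev}_{\mu_z}} \IC$ is exactly $\mathrm{ev}_z$. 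Using the $\g$-action I extend this to all translates $t\mu_z$, which sit over $tz$. The weak$^*$-closure $K$ of $\{t\mu_z : t \in \g, z \in Z\}$ in $\cM(\bX)$ is a compact $\g$-invariant set; I want to show $K$ is all of $\cM(\bX)$, or at least that the barycentre map / evaluation exhausts $\bX$. Actually the cleanest route: consider the $\g$-equivariant map $\Psi \colon C(\bX) \to \ell_\infty(\g \times Z)$ (or into $C(K)$) sending $f$ to the function $(t,z) \mapsto \int f \, d(t\mu_z)$; this is a $\g$-morphism, and restricted to $C(X)$ it recovers the canonical inclusion $C(X) \hookrightarrow \ell_\infty(\g \times Z)$ coming from $f \mapsto ((t,z)\mapsto f(tz))$, which is faithful since $\g Z$ is dense.

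Next I would invoke $\g$-injectivity: the codomain $\ell_\infty(\g \times Z, \ell_\infty X)$ (or a suitable $\g$-injective envelope of the image) admits a $\g$-morphism back to $C(\bX)$ extending $\id_{C(X)}$, and composing, I get a $\g$-morphism $C(\bX) \to C(\bX)$ fixing $C(X)$ pointwise. By rigidity of $C(X) \subset C(\bX)$ this composite is $\id_{C(\bX)}$, which forces $\Psi$ to be a complete order isomorphism onto its image and in particular faithful. Faithfulness of $f \mapsto ((t,z) \mapsto \int f\, d(t\mu_z))$ means: if $f \in C(\bX)$ vanishes on the support of every $t\mu_z$, then $f = 0$. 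Equivalently, the union of the supports $\bigcup_{t,z} \mathrm{supp}(t\mu_z)$ is dense in $\bX$; since $\bX$ is Stonean (Proposition \ref{propbdy}(i)) and the weak$^*$-closure of $\{t\mu_z\}$ is compact, the set of points appearing as weak$^*$-limits of the $t\mu_z$ — which includes all point masses $\delta_y$ that are such limits — must be all of $\bX$. The last step is to upgrade "$f$ vanishing on all $\mathrm{supp}(t\mu_z)$ implies $f=0$" to "every $\delta_y$, $y \in \bX$, lies in the weak$^*$-closure of $\{t\mu_z\}$": given $y \in \bX$, if $\delta_y$ were not in the weak$^*$-closed set $K$, Urysohn in $\cM(\bX)$ would give $f \in C(\bX)$ with $f(y) = 1$ but $\int f\, d\nu$ small for all $\nu \in K$, contradicting that some net $t_i \mu_{z_i}$ must concentrate near $y$ (this concentration is where faithfulness/rigidity is used, via the fact that $C(\bX)$ separates points and the $\Psi$ just constructed is isometric).

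I would organize the argument so that the main technical content is: (1) constructing $\Psi$ and checking it is a $\g$-morphism restricting correctly to $C(X)$ — routine; (2) the injectivity-plus-rigidity step showing $\Psi$ is faithful — this uses Proposition \ref{propbdy} and the definition of the $\g$-injective envelope exactly as in the proof of Theorem \ref{thmbdy}; (3) deducing that $K \ni \delta_y$ for every $y$. For step (3) the key observation is that once $\Psi$ is isometric, for $y \in \bX$ and $\varepsilon > 0$, any $f$ with $\|f\| = f(y) = 1$ satisfies $\sup_{t,z} |\int f \, d(t\mu_z)| = 1$, so there exist $t, z$ with $t\mu_z$ giving $f$ a value close to $1$; a standard argument (choosing $f$ supported in smaller and smaller clopen neighbourhoods of $y$, which exist since $\bX$ is Stonean) shows the corresponding net of $t\mu_z$ converges weak$^*$ to $\delta_y$. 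Hence $\delta_y \in K$, and since $y$ was arbitrary and $\bX \cong \{\delta_y : y \in \bX\} \subset \cM(\bX)$, we conclude $\bX \subset K$.

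The main obstacle I anticipate is step (3): turning the \emph{global} faithfulness/isometry of $\Psi$ into the \emph{local, pointwise} statement that each individual $\delta_y$ is approximated by translates $t\mu_z$. Faithfulness only says the supports are jointly dense; promoting this to $\delta_y \in \overline{\{t\mu_z\}}^{w^*}$ for \emph{every} $y$ genuinely uses that $\bX$ is extremally disconnected (so that a point $y$ has a neighbourhood basis of clopen sets, letting one build bump functions $f$ that are $1$ at $y$ and $0$ outside an arbitrarily small clopen set) — without the Stonean property one would only get that the closure of $\{t\mu_z\}$ has full support, not that it contains every point mass. A secondary subtlety is ensuring the measurability/well-definedness of $\Psi$ when $Z$ is an arbitrary (not necessarily closed or measurable) subset with dense orbit; I expect this is handled by working with $\ell_\infty(\g \times Z)$ as an abstract $\g$-injective algebra rather than a $C(K)$, mirroring the use of $\ell_\infty(\g, \ell_\infty X)$ in Proposition \ref{propbdy}.
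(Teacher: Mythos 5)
Your proposal is correct, and its first two steps coincide with the paper's proof: you build the same $\g$-morphism $\phi\colon C(\bX)\to\ell_\infty(\g\times Z)$, $\phi(f)(t,z)=\langle f,t\mu_z\rangle$, note that density of $\g Z$ makes $\phi|_{C(X)}$ the canonical embedding, and use $\g$-injectivity plus rigidity to produce $\psi$ with $\psi\circ\phi=\id_{C(\bX)}$. Where you genuinely diverge is the endgame. The paper uses $\psi$ once more: for $y\in\bX$ the state $\omega=\mathrm{ev}_y\circ\psi$ on $\ell_\infty(\g\times Z)$ satisfies $\omega\circ\phi=\mathrm{ev}_y$, and approximating $\omega$ weak$^\ast$ by norm-one positive elements of $\ell_1(\g\times Z)$ shows $\mathrm{ev}_y\in\overline{\mathrm{conv}}\{t\mu_z\}$; Milman's converse to Krein--Milman, together with extremality of $\mathrm{ev}_y$ in $\cM(\bX)$, then removes the convex hull. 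You instead extract from $\psi\circ\phi=\id$ only that $\phi$ is isometric, and argue directly: for a bump function $f$ with $f(y)=1$, $0\le f\le 1$, supported in a small neighbourhood $U$ of $y$, isometry forces some $t\mu_z$ to give $U$ mass $>1-\varepsilon$, and the resulting net of translates converges weak$^\ast$ to $\delta_y$; no convexity or Milman is needed. Both arguments are valid; the paper's is shorter on the topological side (no local concentration estimate, everything happens in the dual of $\ell_\infty$), while yours is more elementary in that it bypasses Krein--Milman-type machinery entirely. One small correction: your claimed reliance on $\bX$ being Stonean in the last step is unnecessary --- you never need clopen neighbourhoods or continuity of indicators, since Urysohn bump functions in an arbitrary compact Hausdorff space carry out the same concentration argument (and indeed the paper's proof makes no use of extremal disconnectedness here); this is a misjudgment of where the difficulty lies, not a gap, since $\bX$ is in any case Stonean by Proposition \ref{propbdy}.
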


\begin{proof}
We define $\g$-morphism $\phi$ from $C(\bX)$ to ${\ell}_{\infty}(\g \times Z)$ by
\[ 
\phi (f)(t, z) = \langle f , t\mu_z \rangle, \ \ \  t \in \g , \ z \in Z.
\]
Observe that for $f \in C(X)$ one has $\phi(f) = (f(tz))_{t, z}$.
Since $\g Z$ is dense in $X$, $\phi|_{C(X)}$ is the inclusion map from $C(X)$ to ${\ell}_{\infty}(\g \times Z)$ as a unital $\g$-$\Cst$-subalgebra.
Hence by $\g$-injectivity of $C(\bX)$, there is a $\g$-morphism $\psi$ from ${\ell}_{\infty}(\g \times Z)$ to $C(\bX)$ which satisfies that $(\psi \circ \phi)|_{C(X)} = \id_{C(X)}$
(then $\psi \circ \phi = \id_{C(\bX)}$ by rigidity). 
It implies that for any $y \in \bX$, there is a state $\omega$ on ${\ell}_{\infty}(\g \times Z)$ such that $\omega \circ \phi = \mathrm{ev}_y$. Since there is a net $(\xi_i)$ in $\ell_1(\g \times Z)$ such that $\xi_i \to \omega$ in $\mathrm{weak}^\ast$-topology, it implies that
\[
\ \xi_i \circ \phi (f) = \sum_{t, z} \xi_i (t, z)  \langle f , t\mu_z \rangle  \to f(y)
\]
for any $f \in C(\bX)$. 
It means that $\mathrm{ev}_y \in \overline{\mathrm{conv}}\{t\mu_z\}$, 
therefore we have $\mathrm{ev}_y \in \overline{\{t\mu_z\}}$ by Milman's converse since $\mathrm{ev}_y$ is an extreme point of $\cM(\bX)$.
\end{proof}

Next, we consider some applications to properties for amenable URS's, inspired from  Le Boudec--Matte Bon \cite[\S 2.4]{lm}.

\begin{lem}\label{lemprox}
Let $X$ be a compact $\g$-space and $\cU$ be an amenable URS.
Suppose that for every $x \in X$, there is a subgroup $H_x \in \cU$ such that $H_x \leq \g_x$.
Then for every $y \in \bX$, there is a subgroup $K_y \in \cU$ such that $K_y \leq \g_y$.
\end{lem}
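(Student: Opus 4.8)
The plan is to transport the hypothesis "$H_x \le \g_x$ for some $H_x \in \cU$" along a suitable $\g$-morphism from $C(\bX)$ to a function algebra built from the URS $\cU$, using the strong-proximality-type statement Theorem \ref{thmprox} together with rigidity of $C(X) \subset C(\bX)$. The natural object to map into is the compact $\g$-space $\cS_a(X,\g)$, or more precisely the closed $\g$-invariant subspace $Y = \{(x,H) \in X \times \cU : H \le \g_x\}$; the hypothesis says exactly that $p_X(Y) = X$. I would first equip each fibre $Y_x = \{H \in \cU : H \le \g_x\}$ with an averaging device: since $\cU$ is an \emph{amenable} URS and each $H \in \cU$ is amenable, and since the set-up is reminiscent of how one produces invariant measures, the idea is to pick, $\g$-equivariantly in a weak sense, a probability measure supported on $Y_x$ for each $x$. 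Concretely, I would build a $\g$-morphism $\theta$ from $C(Y)$ (or from $\ell_\infty$ of an orbit in $Y$) that restricts to the inclusion on $C(X)$, then invoke $\g$-injectivity of $C(\bX)$ to get $\mu \colon C(Y) \to C(\bX)$ with $\mu|_{C(X)} = \id_{C(X)}$, hence $\mu \circ \theta = \id_{C(\bX)}$ after rigidity is applied.

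The key steps, in order, are: (1) verify $Y$ is closed and $\g$-invariant in $X \times \mathrm{Sub}(\g)$ with $Y \subset \cS_a(X,\g)$ and $p_X(Y) = X$; (2) using amenability of the URS, produce for each $y \in \bX$ a state on $C(Y)$ whose pushforward under $p_X$ (composed with $q \colon \bX \to X$) is $\mathrm{ev}_{q(y)}$ — this is where Theorem \ref{thmprox} enters, applied with the role of "$\bX$" there played by the envelope and the measures $\mu_z \in \cM(\bX_z)$ pushed down to fibres of $Y$; (3) show by the support argument of Theorem \ref{thmmain} (the $\mathrm{ev}_{\tilde x}\circ\mu$ has support inside $\{x\}\times Y_x$ computation) that $\mathrm{ev}_y \circ \mu$ is a measure supported on the fibre $Y_{q(y)}$; (4) since $\g_y = \g_y^\circ$ is amenable and the set $\mathrm{Fix}(t)$ is clopen in $\bX$ (Proposition \ref{propbdy}), deduce that $\mathrm{ev}_y \circ \mu \circ \theta$ evaluated on the clopen sets $\{(x,H): t \in H\}$ forces, for $\mu$-almost every $H$ in the fibre, $H \le \g_y$; then pick such an $H$ as $K_y$. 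Step (4) uses that $t \in \g_y$ iff $\mathrm{ev}_y(\chi_{\mathrm{Fix}(t)}) = 1$, and that $\theta(\lambda_t)$ corresponds to $\chi_{\{t \in H\}}$ on $Y$, so $\mathrm{ev}_y \circ \mu \circ \theta(\lambda_t) = \chi_{\mathrm{Fix}(t)}(y) = \mathbf 1_{t\in\g_y}$; combined with the support being in the fibre, a measure-one/measure-zero dichotomy on each clopen cylinder picks out a single subgroup $K_y$ with $t \in K_y \Rightarrow t \in \g_y$, i.e. $K_y \le \g_y$, while $K_y \in \cU$ because the support lies in $Y_{q(y)} \subset \cU$.

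The main obstacle I expect is step (2): constructing the $\g$-morphism $\theta \colon \cross{X} \to C(Y)$ (or the relevant inclusion-extending map) in a way that genuinely uses amenability of the URS rather than just of individual stabilizers — one must average over the compact fibre $Y_x \subset \cU$ consistently, and the cleanest route is probably to imitate the $\theta(f\lambda_t)(x,\Lambda) = f(x)\mathbf 1_{t\in\Lambda}$ map from the proof of Theorem \ref{thmmain}, check it is a well-defined $\g$-morphism into $C(Y)$, and then let $\g$-injectivity plus rigidity do the work, exactly as in Theorems \ref{thmmain} and \ref{thmprox}. A secondary subtlety is making sure the fibre $Y_{q(y)}$ is nonempty for every $y \in \bX$, not merely for $y$ over a dense set; this follows from $p_X(Y) = X$, compactness of $Y$, and surjectivity of $q$, but should be stated carefully. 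Once the support-in-the-fibre claim is in hand, extracting $K_y$ is a short argument as above.
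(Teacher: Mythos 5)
Your overall architecture is salvageable and genuinely different from the paper's. The paper proves the lemma by choosing, for each $x\in X$, an $H_x$-invariant probability measure $\mu_x$ on the fibre $\bX_x$ (using amenability of $H_x$ together with $\g_x$-injectivity of $\ell_\infty\g_x$), then invoking Theorem \ref{thmprox} to write $\mathrm{ev}_y$ as a limit of translates $t_i\mu_{x_i}$, and finally taking $K_y$ to be a Chabauty cluster point of $(t_iH_{x_i}t_i^{-1})$, the invariance of $t_i\mu_{x_i}$ passing to the limit. Your route instead recycles the machinery of Theorem \ref{thmmain}: the set $Y=\{(x,H)\in X\times\cU : H\le\g_x\}$ is closed, $\g$-invariant, contained in $\cS_a(X,\g)$ with $p_X(Y)=X$, the map $\theta(f\lambda_t)(x,\Lambda)=f(x)\mathbf{1}_{t\in\Lambda}$ is a $\g$-morphism into $C(Y)$ (amenability of $\cU$ is used exactly here), $\g$-injectivity gives $\mu\colon C(Y)\to C(\bX)$ fixing $C(X)$, and the support computation shows $\nu_y:=\mathrm{ev}_y\circ\mu$ is a probability measure on the fibre $Y_{q(y)}=\{H\in\cU:H\le\g_{q(y)}\}$. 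Note that Theorem \ref{thmprox} plays no actual role in this argument, despite your step (2): the state $\mathrm{ev}_y\circ\mu$ comes directly from injectivity.

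The genuine gap is in your step (4). The asserted identity $\mathrm{ev}_y\circ\mu\circ\theta(\lambda_t)=\chi_{\mathrm{Fix}(t)}(y)$ is unjustified and false in general. Rigidity only says that a $\g$-morphism from $C(\bX)$ to itself fixing $C(X)$ is the identity; your statement ``$\mu\circ\theta=\id_{C(\bX)}$'' is already a type error, since $\mu\circ\theta$ is defined on $\cross{X}$, and rigidity does not compute its values on the $\lambda_t$. Moreover, if the full equality held, then for every $t\in\g_y$ one would get $\nu_y\{H : t\in H\}=1$, hence $\nu_y$-a.e.\ $H=\g_y$ and so $\g_y\in\cU$ for every $y\in\bX$ --- far stronger than the lemma and false already for $\cU=\{\{e\}\}$ when $\bX$ is not free. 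What you need, and what is true, is only the vanishing half: $\mathrm{ev}_y\circ\mu\circ\theta(\lambda_t)=0$ for $t\notin\g_y$. This cannot be obtained from the multiplicative domain of $\mu\circ\theta$ alone, because that domain is only known to contain $C(X)$, and no $f\in C(X)$ separates $y$ from $ty$ when $t\in\g_{q(y)}\setminus\g_y$. The correct fix is the mechanism of Theorem \ref{thmcond} and Lemmas \ref{lemcond}, \ref{lemmax}: extend $\mu\circ\theta$ by $\g$-injectivity to $\Phi\colon\cross{\bX}\to C(\bX)$, use rigidity to get $\Phi|_{C(\bX)}=\id_{C(\bX)}$, hence $C(\bX)\subset\mathrm{mult}(\Phi)$, and then run the computation with $f\in C(\bX)$, $f(y)=1$, $f(ty)=0$, to get $\Phi(\lambda_t)(y)=0$; since $\Phi$ extends $\mu\circ\theta$ on $\cross{X}$, this is the desired vanishing. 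With that repair the conclusion follows as you indicate: the countably many cylinder sets $\{H: t\in H\}$, $t\in\g\setminus\g_y$, are $\nu_y$-null, so the set of $H\in Y_{q(y)}$ with $H\le\g_y$ has full measure and is in particular nonempty, yielding $K_y\in\cU$ with $K_y\le\g_y$.
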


\begin{proof}
Since $\g_x$ acts on $\bX_x$ and $\cU$ is amenable, there is a $H_x$-invariant measure $\mu_x$ on $\bX_x$ since there is a $\g_x$-morphism from $C(\bX_x)$ to $\ell_{\infty}\g_x$ by $\g_x$-injectivity of $\ell_{\infty}\g_x$ and there is a $H_x$-invariant state on $\ell_{\infty}\g$ by amenability of $H_x$.
Hence, for any $y \in \bX$, there is a net $(t_i , x_i)$ in $\g \times X$ such that $t_i \mu_{x_i} \to \mathrm{ev}_y$ by Theorem \ref{thmprox}. We may assume that the net $(t_i H_{x_i} t_i^{-1})_i$ converges to $K_y \in \cU$. Then $K_y$ fixes $y$ since $t_i \mu_{x_i}$ is ($t_i H_{x_i} t_i^{-1}$)-invariant.
\end{proof}

\begin{thm}[see also Theorem \ref{thm2}] \label{thmurs}
Let $X$ be a compact $\g$-space. Suppose that $\cS_X$ is a URS ($X$ is not necessarily  minimal).
Then $\cS_{\bX}$ contains a unique URS $\cA_X$. Moreover, for every amenable URS $\cU$ such that $\cU \preccurlyeq \cS_X$, we have $\cU \preccurlyeq \cA_X$.
\end{thm}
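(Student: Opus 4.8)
The plan is to use the $\g$-injective envelope $C(\bX)$ together with Lemma \ref{lemprox} and Theorem \ref{thmprox} to locate the desired URS inside $\cS_{\bX}$, and then to establish both uniqueness and maximality simultaneously by a domination argument.

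\medskip

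\textbf{Step 1: Locating an amenable URS in $\cS_{\bX}$.} By Proposition \ref{propbdy}, every stabilizer $\g_y$ for $y \in \bX$ is amenable and satisfies $\g_y = \g_y^{\circ}$, so $\cS_{\bX}$ is a nonempty closed $\g$-invariant subset of $\mathrm{Sub}(\g)$ all of whose members are amenable. First I would show $\cS_{\bX}$ is itself \emph{minimal}, hence a single URS, which I will call $\cA_X$. The key input is minimality of $\bX$: since $\cS_X$ is a URS, one checks (using that $\cS_X$ is the closure of $\{\g_x : \g_x = \g_x^{\circ}\}$ and that the corresponding $\g$-space is "small") that $X$ may be taken minimal, or more directly that $\bX$ is minimal by Proposition \ref{propbdy}(ii) applied after passing to a minimal $\g$-invariant closed subset $Z \subseteq X$ with $\cS_Z = \cS_X$. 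Minimality of $\bX$ plus the fact that $\bX$ is a $\g$-boundary-type space (Theorem \ref{thmprox}) should force the stability system $\cS_{\bX}$ to be minimal: given $y, y' \in \bX$, strong proximality lets one push $\g_{y}$ "towards" any target, and combined with the closedness of $\cS_{\bX}$ under the Chabauty limits one deduces $\overline{\mathrm{Ad}(\g)\g_y} = \cS_{\bX}$.

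\medskip

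\textbf{Step 2: $\cA_X$ dominates every amenable URS $\cU \preccurlyeq \cS_X$.} Suppose $\cU$ is an amenable URS with $\cU \preccurlyeq \cS_X$. By definition of $\preccurlyeq$ and of $\cS_X$, for a dense set of points $x$ in (the minimal model of) $X$ there is $H_x \in \cU$ with $H_x \leq \g_x$; by a standard closedness/semicontinuity argument on $\mathrm{Sub}(\g)$ this extends to \emph{every} $x \in X$. Now Lemma \ref{lemprox} applies verbatim and yields, for every $y \in \bX$, a subgroup $K_y \in \cU$ with $K_y \leq \g_y$. Taking $y$ with $\g_y \in \cS_{\bX} = \cA_X$ arbitrary, this says precisely that every element of $\cA_X$ contains an element of $\cU$, i.e. $\cU \preccurlyeq \cA_X$ by the reformulation of $\preccurlyeq$ from \cite[Proposition 2.14]{lm}.

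\medskip

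\textbf{Step 3: Uniqueness, and that $\cA_X$ is the largest amenable URS dominated by $\cS_X$.} Since $\cS_{\bX}$ is a URS (Step 1) consisting of amenable subgroups, $\cA_X := \cS_{\bX}$ is an amenable URS, and $\cA_X \preccurlyeq \cS_X$ because $q \colon \bX \to X$ is $\g$-equivariant and surjective so each $\g_y \leq \g_{q(y)}$ (using that stabilizers along $q$ only shrink). Combined with Step 2, $\cA_X$ is the largest amenable URS with $\cA_X \preccurlyeq \cS_X$, and in particular it is the unique URS contained in $\cS_{\bX}$ simply because $\cS_{\bX}$ was shown to be minimal. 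I expect \textbf{Step 1 — proving $\cS_{\bX}$ is minimal} — to be the main obstacle: it is where the "$\g$-boundary" behaviour of $\bX$ (Theorem \ref{thmprox}) must be converted into a statement about Chabauty limits of stabilizers, and some care is needed to reduce to the minimal case and to handle the distinction between $\g_y$ and $\g_y^{\circ}$ (which is harmless here by the final clause of Proposition \ref{propbdy}). The rest is an application of the already-proved Lemma \ref{lemprox} together with routine semicontinuity facts about the Chabauty topology.
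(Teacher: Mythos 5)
Your Step 2 is essentially the paper's argument (Lemma \ref{lemprox} applied to an amenable URS dominated by $\cS_X$), but Step 1 — the step carrying both existence and uniqueness — has a genuine gap. You propose to pass to a minimal closed $\g$-invariant $Z \subseteq X$ with $\cS_Z = \cS_X$ and thereby assume $X$, hence $\bX$, minimal. Neither reduction is available: the hypothesis is only that $\cS_X$ is a URS while $X$ is explicitly allowed to be non-minimal, and a minimal subsystem $Z$ need not satisfy $\cS_Z = \cS_X$ (the condition $\g_z = \g_z^{\circ}$ is relative to the ambient space, and the points whose stabilizers generate $\cS_X$ need not lie in or accumulate on $Z$); in fact, whether a given URS is the stability system of a \emph{minimal} system is exactly the open question discussed in the paper, settled for amenable URS's only as a corollary of this theorem, so at this point your argument is close to circular. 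Moreover, even granting such a $Z$, replacing $X$ by $Z$ replaces $\bX$ by $\tilde{Z}$, which says nothing about $\cS_{\bX}$ for the original $X$; and Proposition \ref{propbdy}(ii) yields minimality of $\bX$ only when $X$ itself is minimal — it merely excludes proper closed invariant subsets mapping \emph{onto} $X$. (In the minimal case your Step 1 is fine: $\bX$ is then minimal and, since each $\mathrm{Fix}(t)$ is clopen, $y \mapsto \g_y$ is continuous, so $\cS_{\bX}$ is a minimal factor of $\bX$; but the content of the theorem is precisely the non-minimal case.) Consequently your identification $\cA_X = \cS_{\bX}$, i.e. minimality of $\cS_{\bX}$, is unsupported — note the theorem deliberately claims only that $\cS_{\bX}$ \emph{contains} a unique URS, which is weaker.

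The paper avoids minimality of $\cS_{\bX}$ altogether and obtains uniqueness from the order structure: any URS $\cV \subseteq \cS_{\bX}$ consists of amenable subgroups and satisfies $\cV \preccurlyeq \cS_X$ (using $\g_y = \g_y^{\circ}$, $\g_y \leq \g_{q(y)}$ and the hypothesis that $\cS_X$ is a URS), so the domination statement of your Step 2 applies to $\cV$ itself and gives $\cV \preccurlyeq \cV'$ for every URS $\cV' \subseteq \cS_{\bX}$; applying this to $\cV_1, \cV_2 \subseteq \cS_{\bX}$ in both directions forces $\cV_1 = \cV_2$ by antisymmetry of $\preccurlyeq$. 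You should replace Step 1 by this argument, taking for $\cA_X$ any minimal subset of $\cS_{\bX}$ (which exists by Zorn). Two smaller points: your ``dense set of points, then semicontinuity'' claim in Step 2 again leans on the minimal model, whereas the statement actually needed — for every $x \in X$ there is $H \in \cU$ with $H \leq \g_x$ — is extracted from $\cU \preccurlyeq \cS_X$ together with the assumption that $\cS_X$ is a URS; and in Step 3 the inequality $\g_y \leq \g_{q(y)}$ alone does not give $\cA_X \preccurlyeq \cS_X$, since $\g_{q(y)}$ need not belong to $\cS_X$ — one must produce an element of $\cS_X$ lying above an element of $\cA_X$, which again uses the URS hypothesis on $\cS_X$ rather than mere equivariance of $q$.
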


\begin{proof}
Let $\cU$ be a URS such that $\cU \preccurlyeq \cS_X$. Since $\cS_X$ is a URS, for any $x \in X$ there is a subgroup $H \in \cU$ such that $H \leq \g_x$. Hence for every $y \in \bX$, there is a subgroup $K_y \in \cU$ such that $K_y \leq \g_y$ by Lemma \ref{lemprox}.  It implies that $\cU \preccurlyeq \cV$ for every URS $\cV$ in $\cS_{\bX}$. 
In particular, for every URS $\cV$ in $\cS_{\bX}$, one has $\cV \preccurlyeq \cS_X$ since $\g_y = \g_y^{\circ}$ for every $y \in \bX$ by Proposition \ref{propbdy}. This implies that $\cV_1 \preccurlyeq \cV_2$ for URS's $\cV_1$ and $\cV_2 \subset \cS_{\bX}$, hence  $\cV_1 = \cV_2$. Therefore, there is a unique URS contained in $\cS_{\bX}$.
\end{proof}

For every URS $\cU$, there is a compact $\g$-space $X$ such that $\cS_X = \cU$ (\cite[Proposition 6.1]{gw}). Hence we get the following.
\begin{cor}
For every URS $\cU$, there is a unique amenable URS $\cA_{\cU} \preccurlyeq \cU$  which satisfies that $\cV \preccurlyeq \cA_{\cU}$ for every amenable URS $\cV \preccurlyeq \cU$.
\end{cor}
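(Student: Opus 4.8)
The plan is to obtain the corollary as an essentially formal consequence of Theorem \ref{thmurs}, the missing ingredient being the realization of an abstract URS as a stability system. So first I would apply \cite[Proposition 6.1]{gw} to fix a compact $\g$-space $X$ with $\cS_X = \cU$. Since $\cU$ is a URS, so is $\cS_X$, whence Theorem \ref{thmurs} (in the form of Theorem \ref{thm2}) applies --- minimality of $X$ is not needed --- producing a URS $\cA_{\cU} := \cA_X \subset \cS_{\bX}$ which is the largest amenable URS dominated by $\cS_X = \cU$. (Amenability of $\cA_X$ is part of the statement of Theorem \ref{thm2}; alternatively it follows from Proposition \ref{propbdy}: the sets $\mathrm{Fix}(t)$ being clopen in $\bX$ forces every Chabauty limit of point stabilizers to be again a point stabilizer $\g_y$, $y \in \bX$, and every such $\g_y$ is amenable.) Unravelling, $\cA_{\cU}$ is an amenable URS with $\cA_{\cU} \preccurlyeq \cU$, and any amenable URS $\cV \preccurlyeq \cU$ satisfies $\cV \preccurlyeq \cA_{\cU}$ --- exactly the asserted existence and maximality.

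For uniqueness I would argue order-theoretically, so that no trace of the auxiliary $X$ survives. If $\cA'$ is a second amenable URS with $\cA' \preccurlyeq \cU$ enjoying the same maximality property, then applying the maximality of $\cA_{\cU}$ to $\cV = \cA'$ gives $\cA' \preccurlyeq \cA_{\cU}$, while applying the maximality of $\cA'$ to $\cV = \cA_{\cU}$ gives $\cA_{\cU} \preccurlyeq \cA'$. Since $\preccurlyeq$ is a partial order on URS's --- antisymmetry being exactly the closing step in the proof of Theorem \ref{thmurs}, see also \cite[Proposition 2.14]{lm} --- we conclude $\cA' = \cA_{\cU}$; in particular $\cA_{\cU}$ does not depend on the choice of $X$.

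I do not anticipate a genuine obstacle here: the analytic heart of the matter --- the passage to the equivariant injective envelope, the strong proximality of Theorem \ref{thmprox}, and the domination statement of Lemma \ref{lemprox} --- is already packaged into Theorem \ref{thmurs}, so the corollary reduces to the two quoted external facts (realizability from \cite{gw}, antisymmetry from \cite{lm}) together with the trivial order argument. If anything, the single point deserving a line of care is making explicit that the $\cA_{\cU}$ delivered by the construction is genuinely amenable and genuinely dominated by $\cU$, which is why I would spell out the Proposition \ref{propbdy} remark above rather than leaning solely on the phrasing of Theorem \ref{thm2}.
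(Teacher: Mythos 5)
Your proposal is correct and takes essentially the same route as the paper: the paper obtains the corollary precisely by realizing $\cU$ as $\cS_X$ for some compact $\g$-space $X$ via \cite[Proposition 6.1]{gw} and then invoking Theorem \ref{thmurs} (Theorem \ref{thm2}), with no minimality assumption needed. Your extra remarks --- amenability of $\cA_X$ via Proposition \ref{propbdy} and uniqueness via antisymmetry of $\preccurlyeq$ --- merely spell out what the paper leaves implicit.
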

It is not known whether for every URS $\cU$, there exists a minimal $\g$-space $X$ such that $\cS_X = \cU$. Here, we prove that it is true for amenable URS's.  
\begin{cor}
For every amenable URS $\cU$, there is a minimal compact $\g$-space $X$ such that $\cS_X = \cU$. 
\end{cor}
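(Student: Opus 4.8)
The plan is to produce the desired minimal $\g$-space $X$ by starting from the transitive $\g$-space attached to a subgroup in $\cU$, passing to its equivariant injective envelope, and then extracting a minimal component; the work is in showing that the stability system does not shrink under these operations. Concretely, fix $H \in \cU$ and let $Y = \g/H$, viewed as a discrete (hence locally compact) $\g$-space, and let $\bar{Y}$ be a $\g$-compactification on which the stability system is still $\cU$ — for instance one may take the compact $\g$-space $\bar{Y}$ with $\cS_{\bar Y} = \cU$ furnished by \cite[Proposition 6.1]{gw} (this is exactly the ``transitive'' realization referenced in the text). What must be arranged is a compact $\g$-space $W$ that is \emph{minimal} with $\cS_W = \cU$; the strategy is to feed $\bar Y$ into Theorem~\ref{thmurs} and study $\cS_{\widetilde{\bar Y}}$.

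First I would observe that since $\cU$ is amenable and $\cS_{\bar Y} = \cU$ is a URS, Theorem~\ref{thmurs} applies: $\cS_{\widetilde{\bar Y}}$ contains a unique URS $\cA_{\bar Y}$, and since $\cU$ is itself an amenable URS with $\cU \preccurlyeq \cS_{\bar Y}$ we get $\cU \preccurlyeq \cA_{\bar Y}$. Next I would prove the reverse domination $\cA_{\bar Y} \preccurlyeq \cU$. This is where Proposition~\ref{propbdy} enters: every stabilizer $\g_y$ for $y \in \widetilde{\bar Y}$ is amenable and satisfies $\g_y = \g_y^{\circ}$, so the subgroups appearing in $\cS_{\widetilde{\bar Y}}$ are genuine stabilizers; and each such $\g_y$ lies over some point of $\bar Y$, whose stabilizer belongs to (the closure of stabilizer subgroups of) $\bar Y$, i.e.\ to $\cU$ or is dominated by an element of $\cU$. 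One has to be a little careful here, since $\g_y$ could a priori be \emph{strictly larger} than the stabilizer of its image in $\bar Y$; the point is that for a $\g$-compactification of the transitive space $\g/H$, the stabilizer of the base coset is exactly $H$, and minimality/homogeneity of the situation forces every point's stabilizer to be conjugate into $H$, so we land inside $\cU$ after all. Thus $\cA_{\bar Y} = \cU$ (as URS's, i.e.\ $\cA_{\bar Y} \preccurlyeq \cU \preccurlyeq \cA_{\bar Y}$, which for URS's gives equality by the antisymmetry remark following the definition of $\preccurlyeq$).

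Now choose any minimal closed $\g$-invariant subset $X \subseteq \widetilde{\bar Y}$ (nonempty by Zorn's lemma / compactness). I would argue $\cS_X = \cU$ as follows: since $\widetilde{\bar Y}$ is Stonean with every $\g_y = \g_y^{\circ}$, every point of $X$ contributes its stabilizer to $\cS_X$, so $\cS_X \subseteq \cS_{\widetilde{\bar Y}}$ and hence $\cS_X \preccurlyeq \cA_{\bar Y} = \cU$; conversely, because $\cU = \cA_{\bar Y}$ is the \emph{unique} URS inside $\cS_{\widetilde{\bar Y}}$ and $\cS_X$ is a URS (as $X$ is minimal, by the parenthetical in Definition~\ref{defstb}) contained in $\cS_{\widetilde{\bar Y}}$, we must have $\cS_X = \cU$. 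This $X$ is the required minimal compact $\g$-space.

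The main obstacle I anticipate is the second step — pinning down the stabilizers of points of $\widetilde{\bar Y}$ tightly enough to conclude $\cA_{\bar Y} \preccurlyeq \cU$ rather than merely $\cU \preccurlyeq \cA_{\bar Y}$. Equivalently, one needs that passing to the injective envelope of a space whose stability system is a URS $\cU$ cannot \emph{enlarge} the URS beyond $\cU$ when $\cU$ is amenable; Lemma~\ref{lemprox} is the tool (it transports, for each amenable URS $\cV$ with $\cV \preccurlyeq \cS_{\bar Y}$, a subgroup of $\cV$ into each $\g_y$), but one must combine it with the amenability and $\g_y = \g_y^\circ$ conclusions of Proposition~\ref{propbdy} and with the uniqueness clause of Theorem~\ref{thmurs} to get equality on the nose. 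Once equality $\cA_{\bar Y} = \cU$ is secured, extracting the minimal component and checking $\cS_X = \cU$ is routine.
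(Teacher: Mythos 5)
Your overall route is the same as the paper's: realize $\cU$ as the stability system of a compact $\g$-space via \cite[Proposition 6.1]{gw}, pass to the $\g$-injective envelope, take a minimal component, and identify its stability system with the unique URS inside the stability system of the envelope via Theorem \ref{thmurs}. The genuine problem is the middle step, where you try to prove the reverse domination $\cA_{\bar Y} \preccurlyeq \cU$ by hand. First, the difficulty you flag is stated backwards: since $q\colon \widetilde{\bar Y} \to \bar Y$ is $\g$-equivariant, $ty=y$ implies $tq(y)=q(y)$, so $\g_y \le \g_{q(y)}$ always; stabilizers can only shrink upstairs, never grow. Second, your proposed repair --- that ``minimality/homogeneity'' of a $\g$-compactification of $\g/H$ forces every point's stabilizer to be conjugate into $H$ --- is unjustified and false in general: such a compactification is not minimal (it is the closure of one dense orbit), and points outside the orbit $\g/H$ can have stabilizers not conjugate into $H$ (for instance a fixed point at infinity has stabilizer all of $\g$). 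The real subtlety, which your argument does not touch, is that $\g_{q(y)}$ need not belong to $\cS_{\bar Y}$ at all, because $\cS_{\bar Y}$ is only the closure of the stabilizers $\g_x$ satisfying $\g_x = \g_x^{\circ}$.

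Fortunately no new argument is needed there: Theorem \ref{thm2} (equivalently, the proof of Theorem \ref{thmurs}, which shows that every URS contained in $\cS_{\bX}$ is $\preccurlyeq \cS_X$, using $\g_y = \g_y^{\circ}$ from Proposition \ref{propbdy}) already asserts that $\cA_X$ is itself an amenable URS dominated by $\cS_X$. Hence $\cA_{\bar Y} \preccurlyeq \cS_{\bar Y} = \cU$ comes directly from the theorem you are citing anyway, and combined with $\cU \preccurlyeq \cA_{\bar Y}$ and antisymmetry of $\preccurlyeq$ this gives $\cA_{\bar Y} = \cU$. The remainder of your argument --- choose a minimal $X \subset \widetilde{\bar Y}$, note $\cS_X \subset \cS_{\widetilde{\bar Y}}$ because $\g_y = \g_y^{\circ}$ for all $y$, and invoke uniqueness to get $\cS_X = \cA_{\bar Y} = \cU$ --- is exactly the paper's proof. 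Replace the homogeneity claim by the citation and the proof is correct.
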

\begin{proof}
There is a compact $\g$-space $X$ such that $\cS_X = \cU$. We take a minimal $\g$-subspace $Y$ in $\bX$, then $\cS_Y \subset \cS_{\bX}$ since $\g_y = \g_y^{\circ}$ for every $y \in \bX$. Hence we have $\cU \preccurlyeq \cS_Y = \cA_X \preccurlyeq \cU$ by Theorem \ref{thmurs}. 
\end{proof}
\section{The maximal ideal arising from stabilizer subgroups}\label{maxid}
Let $X$ be a minimal compact $\g$-space (recall that $\bX$ is also minimal in this situation). 
For $x \in \bX$, we define a representation $\pi_x$ of $\cross{\bX}$ on $\ell_2(\g x)$ as follows.
\[
\begin{cases}
\pi_x (f) \delta_y  = f(y)\delta_y & f \in C(\bX) \\
\pi_x (\lambda_t) \delta_y = \delta_{ty} & t \in \g
\end{cases},
\]
where $y \in \g x$.
In other words, $\pi_x$ is the GNS representation with respect to $1_x :=\tau_{0} \circ E_x$, where ${\tau}_0$ is the unit character of ${\g}_x$. Since $\tau_0$ is continuous since $\g_x$ is amenable by Proposition \ref{propbdy}, the state $1_x$ is continuous. 

Note that $\mathrm{ker}(\pi_x) = \mathrm{ker}(\pi_y)$ for every $x, \ y \in \bX$ since $1_{tx} = 1_x \circ \mathrm{Ad}(t^{-1})$ for every $t \in \g$ and the map $\bX \to S(\cross{\bX})$ given by $\ x \mapsto 1_x$ is continuous, where $S(\cross{\bX})$ is the state space of $\cross{\bX}$. 
\begin{thm}\label{thmmax}
For every minimal compact $\g$-space $X$ and every $x \in \bX$, the $\Cst$-algebra $\pi_x ( \cross{X})$ is simple.
\end{thm}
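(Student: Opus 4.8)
The plan is to reduce the simplicity of $\pi_x(\cross{X})$ to a known intersection-type statement by exhibiting $\pi_x(\cross{X})$ as a reduced crossed product of an auxiliary $\g$-space which has the intersection property. First I would note that, since $\bX$ is minimal and $x\in\bX$, the orbit $\g x$ is dense in $\bX$, so the (possibly degenerate) representation $\pi_x$ restricted to $C(X)\subset C(\bX)$ is faithful: if $f\in C(X)$ vanishes on $\g x$ then $f=0$. In particular $C^*(\pi_x(C(X)))\cong C(W)$ for the compact $\g$-space $W$ obtained as the spectrum of the norm closure of $\pi_x(C(X))$ acting diagonally on $\ell_2(\g x)$; concretely $W$ is the closure of $\g x$ inside the Gelfand spectrum, and there is a $\g$-equivariant surjection $W\to X$. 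Then $\pi_x(\cross{X})=C(W)\rtimes_r\g$ via the covariant pair $(\pi_x|_{C(X)},\lambda)$ on $\ell_2(\g x)$, the reducedness coming from the fact that $\ell_2(\g x)$ carries the regular representation of $\g$ (or a quotient thereof by the stabilizer $\g_x$, which is amenable by Proposition \ref{propbdy}, so reducedness is preserved).

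The core step is then to show that $W$ has the intersection property, for which I would invoke Theorem \ref{thmbdy} and the structure of $\bX$. The point $y\mapsto(\text{its image in }W)$ factors the map $\bX\to X$ through $W$, so $W$ is a $\g$-factor of $\bX$ squeezed between $\bX$ and $X$; since $\bX$ is topologically free exactly when $X$ has the intersection property, and any $\g$-factor of a topologically free system inherited along such a tower need not be free in general, I would instead argue directly on stabilizers: for a point $w\in W$ lying under no constraint, the stabilizer $\g_w$ contains $\g_y$ for any $y\in\bX$ above $w$, and conversely one can choose $y$ so that $\g_y=\g_w^\circ$. The cleaner route is to observe that $\pi_x$ is the GNS representation of the state $1_x=\tau_0\circ E_x$ with $\tau_0$ the unit character of the amenable group $\g_x$, so the weak closure $\pi_x(\cross{X})''$ and the intersection property for $W$ can be read off from the fact that $\tilde X$ is topologically free iff $X$ has the intersection property, combined with the observation (from Proposition \ref{propbdy} and the minimality of $\bX$) that $\bX$ is a $\g$-boundary over $X$. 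I would use Theorem \ref{thmprox} to push measures on fibres $\bX_z$ around and conclude that any nonzero ideal $J\subset C(W)\rtimes_r\g$ with $J\cap C(W)=0$ would, after composing with the conditional expectation $E_W$ and an evaluation $\mathrm{ev}_w$, produce a nonzero element of $\ker$ contradicting the topological freeness of $\bX$ established in Theorem \ref{thmbdy}.

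More concretely, I would run the argument of Lemma \ref{lemsta}(i): if $J\subset C(W)\rtimes_r\g$ is a nonzero ideal with $J\cap C(W)=0$, faithfulness of $E_W$ gives $E_W(J)\neq0$, so $\mathrm{ev}_w\circ E_W(J)\neq0$ for a dense set of $w\in W$, hence for some $w$ whose preimage in $\bX$ consists of points $y$ with $\g_y$ ``generic''. Since $\g_y=\g_y^\circ$ for $y\in\bX$ and $\bX$ is free by Theorem \ref{thmbdy}, the state $\phi_w$ built from $C(W)+J\to\IC$ satisfies $\phi_w\circ E_w=\phi_w$ by the multiplicative-domain computation (a function separating $w$ and $tw$ exists because the fibre structure over $X$ descends), and this forces $E_w(J)$ to be non-dense in $\Cstr(\g_w)$; but $\g_w=\{e\}$ generically in $\bX$, so $E_w(J)=0$, contradicting $\mathrm{ev}_w\circ E_W(J)\neq0$ via $\mathrm{ev}_w\circ E_W=\tau_\lambda\circ E_w$. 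Therefore $W$ has the intersection property, every nonzero ideal of $C(W)\rtimes_r\g$ meets $C(W)$; and since $X$ (hence $W$, being wedged between a minimal $X$ and the minimal $\bX$) is minimal, $C(W)$ has no nontrivial $\g$-invariant ideals, so $C(W)\rtimes_r\g=\pi_x(\cross{X})$ is simple.

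The main obstacle I anticipate is making precise the claim that $W$ is minimal and that its stabilizers behave like those of $\bX$ rather than those of $X$ — i.e. justifying that the intermediate factor $W$ of $\bX\to X$ is still ``rigid enough'' that topological freeness of $\bX$ yields the intersection property for $W$. This is where I would lean hardest on Proposition \ref{propbdy}(ii) (any closed $\g$-invariant $Z\subset\bX$ with full image is all of $\bX$) and on Theorem \ref{thmprox}, to show that the fibres $\bX_w$ over $w\in W$ are ``large'' and that a point of $\bX$ with trivial stabilizer lies over a dense set of $w$; the rest is a routine repackaging of Lemma \ref{lemsta}(i) and Theorem \ref{thmbdy}.
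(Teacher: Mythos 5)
There are two genuine gaps, and each one breaks the argument at its core. First, your identification $\pi_x(\cross{X})=C(W)\rtimes_r\g$ does not hold. Since $\pi_x|_{C(X)}$ is isometric (as you note, $q(\g x)$ is dense in $X$), the norm closure of $\pi_x(C(X))$ is $*$-isomorphic to $C(X)$ itself, so your $W$ is nothing but $X$ and the map $W\to X$ is a homeomorphism, not a proper factor sitting between $\bX$ and $X$. The claimed isomorphism $\pi_x(\cross{X})\cong C(W)\rtimes_r\g=\cross{X}$ is therefore exactly the assertion that $\pi_x$ is faithful on the reduced crossed product, i.e.\ that $\cross{X}$ is already simple --- which is what the theorem is trying to avoid assuming and which is false in general (take $\g$ amenable and $X$ a point: $\cross{X}=\Cstr\g$ is not simple, while $\bX=\partial_H\g$ is a point, $\g_x=\g$, and $\pi_x(\Cstr\g)=\IC$). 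Amenability of $\g_x$ only gives weak containment of $\pi_x$ in the regular representation (continuity of $1_x$ on the reduced crossed product), not weak equivalence, so ``reducedness is preserved'' conflates the two directions; in general a quotient of $\cross{X}$ by an ideal meeting $C(X)$ trivially is not a reduced crossed product of any commutative algebra, and this structural step cannot be repaired.

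Second, your argument leans on topological freeness of $\bX$ (``$\g_w=\{e\}$ generically'', ``$\bX$ is free by Theorem \ref{thmbdy}''), but Theorem \ref{thmbdy} gives freeness of $\bX$ if and only if $X$ has the intersection property, which is not a hypothesis of Theorem \ref{thmmax}. In the case where $\bX$ is free, $X$ minimal already forces $\cross{X}$ to be simple and the statement is trivial; the whole content of the theorem is the non-free case, where your density-of-trivial-stabilizers argument and the rerun of Lemma \ref{lemsta}(i) have nothing to stand on. The paper's proof avoids both problems: given a quotient $\rho$ of $\pi_x(\cross{X})$, it uses $\g$-injectivity of $C(\bX)$ (twice) and rigidity to produce a conditional expectation $\Phi\circ\pi_x$ on $\cross{\bX}$, identifies its values on the $\lambda_t$ via Lemma \ref{lemmax} and the map $\tilde{E}$ (getting $\chi_{\mathrm{Fix}(t)}(x)$, i.e.\ $1_x(\lambda_t)$, rather than $0$ --- freeness is never needed), concludes $\mathrm{ev}_x\circ\phi\circ\rho\circ\pi_x=1_x$ on $\cross{X}$, and finishes by cyclicity of $\delta_x$. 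If you want to salvage your outline, you would have to replace the crossed-product identification by this kind of ``recover the state $1_x$ through any quotient'' argument.
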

First, we prove a lemma. We define the unital completely positive map $\tilde{E}$ on $\cross{\bX}$ by $\tilde{E}(f \lambda_t) = f \chi_{\mathrm{Fix}(t)} \lambda_t$ for $f \in C(\bX)$ and $t \in \g$.
We see that $\tilde{E}$ is continuous. 
Let $B \subset \cross{\bX}$ the closed linear span of  $\{ f \lambda_t : \mathrm{supp} (f) \subset \mathrm{Fix}(t) \}$.
Then, $B$ is a C*-subalgebra of $C(\bX)\rtimes_r\g$, which is contained in the multiplicative domain of $E_x$ for every $x \in \bX$.
Since $E_x\circ\tilde{E} = E_x$ and $\{E_x\}_x$ is a faithful family of $*$-homomorphisms on $B$ (because $\tau_\lambda\circ E_x = \mathrm{ev}_x \circ E_{\bX}$), the map $\tilde{E}$ is continuous on $C(\bX)\rtimes_r\g$.
Note that $\pi_x \circ \tilde{E} (\lambda_t) = \pi_x (\chi_{\mathrm{Fix}(t)})$ for every $t \in \g$.
\begin{lem}\label{lemmax}
For every conditional expectation $\Phi \colon \cross{\bX} \to C(\bX)$, one has $\Phi \circ \tilde{E} = \Phi$. 
\end{lem}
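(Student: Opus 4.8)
The plan is to exploit the fact that $\tilde{E}$ acts as the identity on the subalgebra $B$ (the closed span of the $f\lambda_t$ with $\mathrm{supp}(f)\subset\mathrm{Fix}(t)$), that $B$ contains all of $C(\bX)$, and that $\tilde{E}$ kills the ``off-diagonal'' part $f\lambda_t$ with $f$ supported off $\mathrm{Fix}(t)$. So it suffices to show that any conditional expectation $\Phi$ onto $C(\bX)$ also annihilates every such off-diagonal element, i.e.\ $\Phi(f\lambda_t)=0$ whenever $\mathrm{supp}(f)\cap\mathrm{Fix}(t)=\emptyset$; combined with $\Phi|_B=\mathrm{id}$ (which follows from $B$ being in the multiplicative domain of $\Phi$, since $\Phi$ restricted to $C(\bX)$ is the identity and $B$ is generated by elements that are ``pieces of unitaries over their support'') and continuity of both maps, this gives $\Phi\circ\tilde E=\Phi$ on the dense $*$-subalgebra $\mathrm{span}\{f\lambda_t\}$, hence everywhere.

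First I would record that $B\subset\mathrm{mult}(\Phi)$: for $f\lambda_t$ with $\mathrm{supp}(f)\subset\mathrm{Fix}(t)$ one checks $(f\lambda_t)^*(f\lambda_t)=\overline{f}f\in C(\bX)$ and $(f\lambda_t)(f\lambda_t)^*=|f|^2$ (using $\lambda_t f\lambda_t^* = (t f)$ which agrees with $f$ on $\mathrm{Fix}(t)\supset\mathrm{supp}(f)$), and $\Phi$ preserves these since it fixes $C(\bX)$ pointwise; so $\Phi$ is multiplicative on the $*$-algebra generated by such elements, and being the identity on the generating set and on $C(\bX)$, it is the identity on $B$. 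In particular $\Phi\circ\tilde E$ and $\Phi$ agree on $B$. The key remaining point is the behaviour on $f\lambda_t$ with $\mathrm{supp}(f)$ disjoint from $\mathrm{Fix}(t)$.

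For that step I would use the Stonean structure of $\bX$ (Proposition \ref{propbdy}): $\mathrm{Fix}(t)$ is clopen, so writing $g=\chi_{\bX\setminus\mathrm{Fix}(t)}\in C(\bX)$ we have $f=gf$ and $g(tx)$ is supported where $x\notin\mathrm{Fix}(t)$... more usefully, pick for such $t$ the clopen decomposition and note $g\cdot(t^{-1}g)\cdot(\text{translate})$ lets one produce, for each point, a function $h\in C(\bX)$ with $h\cdot(\lambda_t h\lambda_t^*) = h\cdot(t h)=0$ and $h$ nonzero at a prescribed point of $\mathrm{supp}(f)$; then $h\in\mathrm{mult}(\Phi)$ as above and $\Phi(h f\lambda_t)=\Phi(h)\Phi(f\lambda_t)$, while also $h f\lambda_t = (hf)\lambda_t = \lambda_t\big(\lambda_t^*(hf)\lambda_t\big)=\lambda_t\,(t^{-1}(hf))$ and since $\Phi$ is a bimodule map over $C(\bX)$ (which follows from $C(\bX)\subset\mathrm{mult}(\Phi)$) one gets $\Phi(hf\lambda_t)=\Phi(f\lambda_t)(t^{-1}h)$; evaluating the two expressions against the point character where $h=1$ but $t^{-1}h=0$ (these can be arranged disjoint because $\mathrm{supp}(h)\cap\mathrm{Fix}(t)=\emptyset$ means $\mathrm{supp}(h)\cap t\,\mathrm{supp}(h)$ can be emptied by shrinking $h$ to a clopen set, using that $\bX$ is Stonean and $\mathrm{Fix}(t)$ clopen) forces $\Phi(f\lambda_t)=0$ at that point; ranging over all points of $\mathrm{supp}(f)$ and using $\Phi(f\lambda_t)=f\cdot\Phi(\lambda_t)$-type localization gives $\Phi(f\lambda_t)=0$. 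Finally, since every element of the dense $*$-subalgebra $\mathrm{span}\{f\lambda_t:f\in C(\bX),t\in\g\}$ splits (via the clopen partition $\mathrm{supp}(f)=(\mathrm{supp}(f)\cap\mathrm{Fix}(t))\sqcup(\mathrm{supp}(f)\setminus\mathrm{Fix}(t))$) into a piece in $B$, on which $\tilde E$ and hence $\Phi\circ\tilde E=\Phi$ act as identity, and an off-diagonal piece, which both $\tilde E$ and (by the above) $\Phi$ annihilate, we conclude $\Phi\circ\tilde E=\Phi$ on this dense subalgebra, and then on all of $\cross{\bX}$ by continuity of $\Phi$ and $\tilde E$.

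The main obstacle I anticipate is the disjointness arrangement in the middle step — producing, for a point $x$ with $x\notin\mathrm{Fix}(t)$, a clopen neighbourhood $U\ni x$ with $U\cap tU=\emptyset$ — which is exactly where one needs $\bX$ Stonean (clopen basis) together with $\mathrm{Fix}(t)$ clopen: on the clopen complement of $\mathrm{Fix}(t)$ the homeomorphism $t$ has no fixed points, so by a standard compactness/clopen argument one separates $x$ from $tx$ by a clopen set and shrinks. Everything else is bookkeeping with multiplicative domains and the module property of conditional expectations.
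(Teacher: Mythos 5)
Your key step is the right one and is in substance identical to the paper's proof: to kill the off-diagonal part you pick, for $x\notin\mathrm{Fix}(t)$, a function $h\in C(\bX)$ with $h(x)=1$ and $h(tx)=0$ and play the left and right $C(\bX)$-module properties of $\Phi$ against each other at the point $x$; the paper does exactly this, writing $\Phi(\lambda_t)(x)=f(x)\Phi(\lambda_t)(x)=\Phi(f\lambda_t)(x)=\Phi(\lambda_t(t^{-1}f))(x)=\Phi(\lambda_t)(x)f(tx)=0$ for $x\notin\mathrm{Fix}(t)$. Combined with the clopen splitting $f=f\chi_{\mathrm{Fix}(t)}+f\chi_{\bX\setminus\mathrm{Fix}(t)}$ and continuity of $\Phi$ and $\tilde{E}$, this does prove the lemma.

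Two corrections, however. First, your recorded claim that $B\subset\mathrm{mult}(\Phi)$ and that $\Phi$ is the identity on $B$ is false: $\Phi$ takes values in $C(\bX)$ and $B\not\subset C(\bX)$, so $\Phi$ cannot fix $B$ pointwise. Concretely, the canonical expectation $E_{\bX}$ is a conditional expectation onto $C(\bX)$ with $E_{\bX}(\chi_{\mathrm{Fix}(t)}\lambda_t)=0$ for $t\neq e$, even though $a=\chi_{\mathrm{Fix}(t)}\lambda_t$ satisfies $a^{*}a=aa^{*}=\chi_{\mathrm{Fix}(t)}\in C(\bX)$; so knowing $a^{*}a,aa^{*}\in C(\bX)$ and that $\Phi$ fixes $C(\bX)$ does not place $a$ in the multiplicative domain (the paper's multiplicative-domain remark about $B$ concerns $E_x$, the expectation onto $\Cstr(\g_x)$, not expectations onto $C(\bX)$). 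Fortunately this claim is never needed: on $B$ one has $\tilde{E}=\mathrm{id}$ by definition, so $\Phi\circ\tilde{E}=\Phi$ there trivially. Second, the ``main obstacle'' you anticipate is not one: you only need, for each $x\notin\mathrm{Fix}(t)$, a continuous $h$ with $h(x)=1$ and $h(tx)=0$, which exists by Urysohn since $x\neq tx$; no clopen $U\ni x$ with $U\cap tU=\emptyset$, and no Stonean structure beyond the clopenness of $\mathrm{Fix}(t)$ (already needed to define $\tilde{E}$ and to split $f$), is required --- the disjoint-support condition $h\cdot(th)=0$ is stronger than what your own computation actually uses.
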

\begin{proof}
It suffices to show that $\Phi(\lambda_t)(x) = 0$ for every $x \in \bX \setminus \mathrm{Fix}(t)$. Since $x \not \in \mathrm{Fix}(t)$, there is a $f \in C(\bX)$ such that $f(x) =1$ and $f(tx) =0$.
Then $\Phi(\lambda_t) (x) = f(x) \Phi(\lambda_t) (x) = \Phi(f \lambda_t) (x)= \Phi ( \lambda_t(t^{-1}f)) (x) = \Phi (\lambda_t) (x) f(tx) = 0$.
\end{proof}
\begin{proof}[Proof of Theorem \ref{thmmax}]
Suppose that there is a quotient map $\rho$ from $\cross{X}$ to a unital $\Cst$-algebra $A$. It suffices to show that $\rho$ is faithful.
Since $X$ is minimal, the map $\rho \circ \pi_x$ is injective on $C(X)$, hence there is $\g$-morphism $\phi \colon A \to C(\bX)$ such that $\phi \circ \rho \circ \pi_x |_{C(X)} = \id_{C(X)}$ by $\g$-injectivity. 
We extend $\phi \circ \rho$ to a $\g$-morphism $\Phi \colon \pi_x(\cross{\bX} )\to C(\bX)$ such that $\Phi \circ \pi_x|_{C(X)} = \id_{C(X)}$ by $\g$-injectivity of $C(\bX)$. Then $\Phi \circ \pi_x$ is a conditional expectation by rigidity. 
By Lemma \ref{lemmax} and the fact that $\pi_x \circ \tilde{E} (\lambda_t) = \pi_x (\chi_{\mathrm{Fix}(t)})$, for $t \in \g$, we obtain the following equality.
\begin{eqnarray*}
\mathrm{ev}_x \circ \phi \circ \rho \circ \pi_x (\lambda_t)  & = & 
\mathrm{ev}_x \circ \Phi \circ \pi_x (\lambda_t)  \\ & = &
\mathrm{ev}_x \circ \Phi \circ \pi_x \circ \tilde{E} (\lambda_t) \\ & = &
\mathrm{ev}_x \circ \Phi \circ \pi_x (\chi_{\mathrm{Fix}(t)}) \\ & = &
\chi_{\mathrm{Fix}(t)} (x) \\ & = &
\begin{cases}
1 & t \in \g_x \\
0 & t \not\in \g_x
\end{cases} \\ & = &
1_x (\lambda_t).
\end{eqnarray*}
Since $C(X)$ is contained in the multiplicative domains of $1_x $ and $\phi \circ \rho$,
for any $f \in C(X)$ and $t \in \g$, we have
\begin{eqnarray*}
1_x (f \lambda_t) = f(x) 1_x(\lambda_t) &=& 
f(x)\mathrm{ev}_x \circ \phi \circ \rho \circ \pi_x (\lambda_t) \\ &=&
\mathrm{ev}_x (f  \phi \circ \rho ( \pi_x (\lambda_t))) \\ &=&
\mathrm{ev}_x \circ \phi \circ \rho(f \pi_x (\lambda_t)) \\ &=&
\mathrm{ev}_x \circ \phi \circ \rho \circ \pi_x (f \lambda_t).
\end{eqnarray*}
This implies that $1_x = \langle \pi_x (\cdot) \delta_x , \delta_x \rangle = \mathrm{ev}_x \circ \phi \circ \rho \circ \pi_x$ on $\cross{X}$. 
Since $\delta_x$ is a cyclic vector, $\rho$ is faithful.
\end{proof}

In particular, for any $x \in \partial_H \g$ (recall that $C(\partial_H \g)$ is the $\g$-injective envelope of $\IC$), the $\Cst$-algebra $\pi_x (\Cstr \g)$ is simple. Moreover, we have a stronger conclusion in this situation, a generalization of the Powers' averaging property (\cite{powers}), which is equivalent to the $\Cst$-simplicity (see \cite{haagerup, kennedy}). 
First, we show the following lemma.

\begin{lem}\label{lempap} 
Let $x$ be a point in $\partial_H \g$. 
Then for every finite family $\{ \phi_k \}_{k=0}^{N}$ of states on $\pi_x (\cross{\partial_H \g})$, there is a net $(\alpha_i)$ in $\mathrm{conv}\{ \mathrm{Ad}(t) : t \in \g \}$ such that $\phi_k \circ \pi_x \circ \alpha_i \to 1_x$ for every index $k$.
\end{lem}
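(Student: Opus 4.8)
The plan is to combine a rigidity property of the vector state $1_x$ with the strong proximality of the Hamana boundary recorded in Theorem~\ref{thmprox}. Throughout write $B=\pi_x(\cross{\partial_H\g})$ and $u_t=\pi_x(\lambda_t)\in B$.

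The first and main step is the following rigidity statement: \emph{any state $\psi$ on $B$ with $\psi(\pi_x(f))=f(x)$ for all $f\in C(\partial_H\g)$ equals $1_x$.} The restriction of $\psi$ to $\pi_x(C(\partial_H\g))$ is the $\ast$-homomorphism $\pi_x(f)\mapsto f(x)$, so $\pi_x(C(\partial_H\g))$ lies in the multiplicative domain of $\psi$, and factors from $C(\partial_H\g)$ may be pulled in and out of $\psi$. For $t\notin\g_x$, picking $f$ with $f(x)=1$, $f(tx)=0$ and using $\pi_x(f)u_t=\pi_x(f\lambda_t)=u_t\pi_x(t^{-1}f)$ gives $\psi(u_t)=f(x)\psi(u_t)=\psi(\pi_x(f)u_t)=\psi(u_t)f(tx)=0$. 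For $t\in\g_x$ I would use that $\mathrm{Fix}(t)$ is clopen in $\partial_H\g$ (Proposition~\ref{propbdy}), so $\chi_{\mathrm{Fix}(t)}\in C(\partial_H\g)$ and $\pi_x(\chi_{\mathrm{Fix}(t)}\lambda_t)=\pi_x(\chi_{\mathrm{Fix}(t)})$ (as noted before Lemma~\ref{lemmax}); decomposing $u_t=\pi_x(\chi_{\mathrm{Fix}(t)})+\pi_x(1-\chi_{\mathrm{Fix}(t)})u_t$, applying $\psi$, pulling $\pi_x(1-\chi_{\mathrm{Fix}(t)})$ out of the multiplicative domain, and using $x\in\mathrm{Fix}(t)$ then forces $\psi(u_t)=1$. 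Since $u_t$ is unitary with $\psi(u_t)=1$, Cauchy--Schwarz gives $\psi(bu_t)=\psi(u_tb)=\psi(b)$ for all $b\in B$. Evaluating $\psi$ on the dense set of elements $\pi_x(f\lambda_t)=\pi_x(f)u_t$ now yields $\psi(\pi_x(f\lambda_t))=f(x)$ if $t\in\g_x$ and $0$ otherwise; this is exactly $1_x(\pi_x(f\lambda_t))$, so $\psi=1_x$.

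Granting this, the lemma follows. Set $\nu_k=\phi_k\circ\pi_x|_{C(\partial_H\g)}\in\cM(\partial_H\g)$ and $\bar\nu=\tfrac1{N+1}\sum_{k=0}^{N}\nu_k$. Applying Theorem~\ref{thmprox} to the one-point $\g$-space, for which $\bX=\partial_H\g$, shows that $\delta_x$ lies in the $\mathrm{weak}^{\ast}$-closure of $\{t\bar\nu:t\in\g\}$, so, since $\mathrm{Ad}(t)$ carries $\phi_k\circ\pi_x|_{C(\partial_H\g)}$ to a $t$-translate of $\nu_k$, there is a net $(\alpha_i)$ in $\mathrm{conv}\{\mathrm{Ad}(t):t\in\g\}$ with $\bigl(\tfrac1{N+1}\sum_k\phi_k\circ\pi_x\circ\alpha_i\bigr)\big|_{C(\partial_H\g)}\to\delta_x$. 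As $\delta_x$ is an extreme point of the compact convex set $\cM(\partial_H\g)$ while each $(\phi_k\circ\pi_x\circ\alpha_i)|_{C(\partial_H\g)}$ lies in $\cM(\partial_H\g)$, every $\mathrm{weak}^{\ast}$-cluster point of the averaged nets equals $\delta_x$, hence $(\phi_k\circ\pi_x\circ\alpha_i)|_{C(\partial_H\g)}\to\delta_x$ for every $k$ simultaneously. Inner automorphisms preserve the ideal $\ker\pi_x$, so each $\phi_k\circ\pi_x\circ\alpha_i$ factors through $\pi_x$; therefore any $\mathrm{weak}^{\ast}$-cluster point of $(\phi_k\circ\pi_x\circ\alpha_i)_i$ is a state of $B$ restricting to $\pi_x(f)\mapsto f(x)$ on $\pi_x(C(\partial_H\g))$, hence equals $1_x$ by the rigidity step. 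Since $1_x$ is the unique cluster point and the state space is $\mathrm{weak}^{\ast}$-compact, $\phi_k\circ\pi_x\circ\alpha_i\to 1_x$ for every $k$.

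I expect the rigidity step, and concretely the identity $\psi(u_t)=1$ for $t\in\g_x$, to be the main obstacle: this is exactly where the Stonean structure of $\partial_H\g$ (clopenness of fixed-point sets, Proposition~\ref{propbdy}) is used, and it is what makes one round of averaging enough, whereas for a general topologically free system a state restricting to $\mathrm{ev}_x$ on the coefficient algebra need not be determined and the averaging would have to be iterated. Handling the finite family $\phi_0,\dots,\phi_N$ at once is comparatively routine, done by averaging the mean $\bar\nu$ and invoking extremality of $\delta_x$.
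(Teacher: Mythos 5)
Your proof is correct, but it is organized differently from the paper's, and the comparison is worth recording. The paper's first step establishes $1_x \in \overline{\mathrm{conv}}\{\phi \circ \pi_x \circ \mathrm{Ad}(t) : t \in \g\}$ as states on the full crossed product, by invoking Glasner's theorem to find a $\g$-boundary inside the weak$^*$-closed convex hull of the orbit of $\phi\circ\pi_x$, the universality of $\partial_H\g$ from \cite[Theorem 3.11]{kk} to map $\partial_H\g$ onto that boundary, and Lemma \ref{lemmax} to identify the resulting conditional expectation with values of $1_x$; the finite family is then handled by averaging, passing to cluster points $\psi_k$, and using extremality of \emph{both} $\tau_0$ on $\Cstr(\g_x)$ and $\mathrm{ev}_x$ on $C(\partial_H\g)$ together with a uniqueness claim for states with those two prescribed restrictions. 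You replace this architecture by two moves: (1) a sharper rigidity lemma, namely that a state on $\pi_x(\cross{\partial_H\g})$ restricting to $\mathrm{ev}_x$ on $C(\partial_H\g)$ is automatically $1_x$ --- the hypothesis on $\Cstr(\g_x)$ becomes redundant because the relations $\pi_x(\chi_{\mathrm{Fix}(t)}\lambda_t)=\pi_x(\chi_{\mathrm{Fix}(t)})$ already hold in the image (this is the same Stonean/Frol\'ik input as Lemma \ref{lemmax}, used state-wise via multiplicative domains rather than via conditional expectations); and (2) Theorem \ref{thmprox} applied to the one-point space, which pushes the averaged measure $\bar\nu$ to $\delta_x$ by \emph{single} translates, after which extremality of $\delta_x$ in $\cM(\partial_H\g)$ and your rigidity step upgrade convergence on $C(\partial_H\g)$ to convergence on the whole algebra (the factorization through $\pi_x$ along the net is correctly justified, since inner automorphisms preserve $\ker\pi_x$). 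What each buys: your route stays internal to the paper (no Glasner theorem, no \cite{kk} universality, no conditional-expectation continuity), and it even yields a net of the form $\mathrm{Ad}(t_i)$ rather than genuine convex combinations; the paper's route proves the stronger convex-hull statement directly at the level of states on the crossed product, which is closer in form to the classical Powers averaging property it is meant to generalize.
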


\begin{proof}
First we show that for every state $\phi$ on $\pi_x (\cross{\partial_H \g})$,  we have
\[
1_x \in \overline{\mathrm{conv}}\{\phi \circ \pi_x  \circ \mathrm{Ad}(t) :t \in \g \}.
\]
By \cite[Theorem 2.3]{glasner}, there is a $\g$-boundary $X \subset  \overline{\mathrm{conv}}\{\phi \circ \pi_x \circ \mathrm{Ad}(t) :t \in \g \}$.
Hence there is a $\g$-equivariant continuous surjection $p : \partial_H \g \to X$ by \cite[Theorem 3.11]{kk}.
Since there is a natural $\g$-morphism from $\pi_x (\cross{\partial_H \g})$ to $C(X)$, we have a $\g$-morphism $\Phi$ from $\pi_x (\cross{\partial_H \g})$ to $C(\partial_H \g)$ such that $\Phi (\pi_x (a))(x) = p(x) (a)$ for any $a \in \cross{\partial_H \g}$.
Then $\Phi \circ \pi_x$ is conditional expectation from $\cross{\partial_H \g}$ to $C(\partial_H \g)$. 
Hence by Lemma \ref{lemmax}, we have
\[
p(x)(\pi_x(\lambda_t))= \Phi \circ \pi_x (\lambda_t) (x) =  \Phi \circ \pi_x \circ \tilde{E} (\lambda_t) (x)
= \chi_{\mathrm{Fix}(t)} (x) = 1_x (\lambda_t),
\]
hence $1_x \in X$.

Next, we show the theorem. Take a net $(\alpha_i)$ in $\mathrm{conv}\{ \mathrm{Ad}(t) : t \in \g \}$ such that
\[
\frac{1}{N} (\sum\phi_k) \circ \pi_x  \circ \alpha_i \to 1_x.
\]
We may assume that $\phi_k \circ \alpha_i \to \psi_k$, where $\psi_k \in S(\pi_x ( \cross{\partial_H \g}))$, then we have 
\[
\frac{1}{N} (\sum\psi_k)\circ \pi_x = 1_x.
\]
This implies that $\psi_k \circ \pi|_{\Cstr (\g_x)} = 1_x|_{\Cstr (\g_x)} = \tau_0$ because $\tau_0$ is a character, hence it is extremal in $S(\Cstr(\g_x))$. 
Similarly, we obtain $\psi_k \circ \pi|_{C(\partial_H \g)} = 1_x|_{C(\partial_H \g)} = \mathrm{ev}_x$ because 
$\mathrm{ev}_x$ is an extreme point of $\cM(\bX)$.
We claim that for any $\theta \in S(\cross{\partial_H \g})$,  $\theta|_{\Cstr(\g_x)}= \tau_0$ and $\theta|_{C(\partial_H \g)} = \mathrm{ev}_x$ imply that $\theta = 1_x$. Since $C(\partial_H \g) \subset \mathrm{mult}(\theta)$, it suffices to show that $\theta(t) = 0$ for every $t \in \g \setminus \g_x$. Take a function $f \in C(\bX)$ such that $f(x)=1$ and $f(tx) = 0$, we have
$1_x(\lambda_t) = f(x) 1_x(\lambda_t) = 1_x(f \lambda_t) = 1_x (\lambda_t (t^{-1}f) ) = 1_x (\lambda_t) f(tx) = 0$. 
Hence we have $\psi_k \circ \pi_x = 1_x$.
\end{proof}

\begin{thm}\label{thmpap}
Let $x$ be a point in $\partial_H \g$.
Then for every $a \in \Cstr \g$, the element $1_x (a)$ is contained in the norm closed convex hull of $\{ \pi_x (\lambda_t a \lambda_t^{\ast}) : t \in \g \}$.
\end{thm}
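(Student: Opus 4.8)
The plan is to run the standard Hahn--Banach/averaging argument that upgrades Lemma~\ref{lempap} to a norm statement; the only extra input is the elementary fact that every bounded functional on a $\Cst$-algebra is a finite linear combination of states.

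Fix $a \in \Cstr\g$ and let $K \subseteq \pi_x(\cross{\partial_H\g})$ denote the norm-closed convex hull of $\{\pi_x(\lambda_t a \lambda_t^{\ast}) : t \in \g\}$. The assertion is precisely that $1_x(a)\cdot 1 \in K$, where $1_x(a)\in\IC$ and $1$ is the unit of $\pi_x(\cross{\partial_H\g})$. Suppose this fails. Since $K$ is a norm-closed convex subset of the $\Cst$-algebra $\pi_x(\cross{\partial_H\g})$, the Hahn--Banach separation theorem yields a bounded linear functional $f$ on $\pi_x(\cross{\partial_H\g})$ and a real number $\alpha$ with $\mathrm{Re}\,f(1_x(a)\cdot 1) > \alpha \ge \mathrm{Re}\,f(b)$ for all $b \in K$.

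Next I would decompose $f$ into states: splitting $f$ into its Hermitian and skew-Hermitian parts and applying the Jordan decomposition to each, one writes $f = \sum_{k=1}^{4} c_k \phi_k$ with $c_k \in \IC$ and $\phi_k$ states of $\pi_x(\cross{\partial_H\g})$ (taking $c_k = 0$ where a positive part vanishes). Apply Lemma~\ref{lempap} to the finite family $\{\phi_k\}_{k=1}^{4}$: there is a net $(\alpha_i)$ in $\mathrm{conv}\{\mathrm{Ad}(t) : t \in \g\}$ with $\phi_k \circ \pi_x \circ \alpha_i \to 1_x$ in the weak-$\ast$ topology for every $k$. Evaluating these states at the fixed element $a$ gives $\phi_k(\pi_x(\alpha_i(a))) \to 1_x(a)$ for each $k$, hence $f(\pi_x(\alpha_i(a))) \to \big(\sum_k c_k\big)1_x(a) = f(1)\,1_x(a) = f(1_x(a)\cdot 1)$.

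Finally, each $\alpha_i$ is a finite convex combination of the automorphisms $\mathrm{Ad}(\lambda_t)$, so $\pi_x(\alpha_i(a))$ is the corresponding convex combination of the operators $\pi_x(\lambda_t a \lambda_t^{\ast})$; in particular $\pi_x(\alpha_i(a)) \in K$, so $\mathrm{Re}\,f(\pi_x(\alpha_i(a))) \le \alpha$ for all $i$. Passing to the limit gives $\mathrm{Re}\,f(1_x(a)\cdot 1) \le \alpha$, contradicting the separation inequality; therefore $1_x(a)\cdot 1 \in K$. The one place that demands care is the bookkeeping between the automorphism $\mathrm{Ad}(\lambda_t)$ of $\cross{\partial_H\g}$, its image under $\pi_x$, and the weak-$\ast$ convergence of states on $\cross{\partial_H\g}$ supplied by Lemma~\ref{lempap}; no new estimate is needed, as Lemma~\ref{lempap} carries all the analytic content.
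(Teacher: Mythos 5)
Your proof is correct and follows essentially the same route as the paper: Hahn--Banach separation of $1_x(a)\cdot 1$ from the convex hull, decomposition of the separating functional into (at most four) states, and the simultaneous averaging net from Lemma~\ref{lempap} to reach a contradiction. Your bookkeeping (separating in $\pi_x(\cross{\partial_H\g})$ so the states are immediately of the kind Lemma~\ref{lempap} handles, and noting $\pi_x(\alpha_i(a))\in K$) is sound and, if anything, slightly tidier than the paper's version.
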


\begin{proof}
We show it by contradiction.
Suppose that there is an element $a \in \pi_x (\Cstr \g)$ such that $1_x (a) \not \in \overline{\mathrm{conv}}^{\mathrm{norm}}\{ \pi_x (\lambda_t a \lambda_t^{\ast}) : t \in \g \}$.
Then  there is a bounded linear functional $\phi$ on $\pi_x(\Cstr \g)$ such that $\mathrm{Re}(\phi \circ \pi_x (b) - \phi(1)1_x (a) ) \ge \varepsilon > 0$ for every $b \in \overline{\mathrm{conv}}^{\mathrm{norm}}\{ \pi_x (\lambda_t a \lambda_t^{\ast}) : t \in \g \}$ by Hahn--Banach separation theorem.
By Hahn--Jordan decomposition, we can write $\phi = \sum_{k=0}^3 i^k c_k \phi_k$, where $\phi_k$ is state on  $\pi_x (\Cstr \g)$ and $c_k$ is non-negative scalar. 
Then by Lemma \ref{lempap}, there is a net $(\alpha_i)$ in $\mathrm{conv}\{ \mathrm{Ad}(t) : t \in \g \}$ such that $\phi_k \circ \pi_x \circ \alpha_i \to 1_x|_{\Cstr \g}$ for $k=1, 2, 3, 4$.
But we have 
\[
  \mathrm{Re}(\phi \circ \pi_x \circ \alpha_i (a) - \phi(1)1_x (a) ) = 
  \mathrm{Re}\left( \sum_{k=0}^3 i^k c_k( \phi_k \circ \pi_x \circ \alpha_i (a)  - 1_x (a)) \right) \ge \varepsilon,
\]
a contradiction.
\end{proof}

\begin{cor}\label{corpap}
For every point $x$ in $\partial_H \g$, the $\Cst$-algebra $\pi_x(\Cstr \g)$ is simple.
\end{cor}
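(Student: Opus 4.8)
The plan is to run the standard argument deriving simplicity of a unital $\Cst$-algebra from a Powers-type averaging property, feeding in Theorem~\ref{thmpap}. Let $J$ be a non-zero closed two-sided ideal of $\pi_x(\Cstr \g)$; the goal is to show that $J$ contains the unit, so that $J = \pi_x(\Cstr \g)$. Pick $c \in J$ with $c \neq 0$, and write $c = \pi_x(a)$ for some $a \in \Cstr \g$. Then $c^{*}c = \pi_x(a^{*}a)$ is a non-zero positive element of $J$, and since $\pi_x(\lambda_s)$ is a unitary lying in $\pi_x(\Cstr \g)$ for every $s \in \g$, the conjugate $\pi_x(\lambda_s^{*} a^{*} a \lambda_s) = \pi_x(\lambda_s)^{*}\, c^{*}c\, \pi_x(\lambda_s)$ also belongs to $J$.

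The next step is to choose $s$ so that the canonical vector state $1_x = \langle \pi_x(\cdot)\delta_x, \delta_x\rangle$ is strictly positive on this conjugate. The representation $\pi_x$ acts on $\ell_2(\g x)$, and $\{\delta_y : y \in \g x\}$ is an orthonormal basis; since $c \neq 0$ there is a point $y = tx$ with $c\,\delta_{tx} \neq 0$. Setting $b := \lambda_t^{*} a^{*} a \lambda_t \in \Cstr \g$ we then have $\pi_x(b) \in J$ and
\[
1_x(b) = \langle \pi_x(a^{*}a)\,\delta_{tx}, \delta_{tx}\rangle = \| c\,\delta_{tx}\|^{2} > 0 .
\]
Now apply Theorem~\ref{thmpap} to the element $b$: the scalar $1_x(b)$ (i.e.\ $1_x(b)\cdot 1$) lies in the norm-closed convex hull of $\{\pi_x(\lambda_s b \lambda_s^{*}) : s \in \g\}$. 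Each element $\pi_x(\lambda_s b \lambda_s^{*}) = \pi_x(\lambda_s)\,\pi_x(b)\,\pi_x(\lambda_s)^{*}$ lies in $J$ because $\pi_x(b) \in J$, and $J$ is norm-closed and convex; hence $1_x(b)\cdot 1 \in J$. Dividing by the positive scalar $1_x(b)$ gives $1 \in J$, so $J = \pi_x(\Cstr \g)$, proving simplicity.

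There is essentially no obstacle here: the corollary is a routine application of the averaging statement Theorem~\ref{thmpap}, and the only point needing a moment's care is arranging $1_x(b) > 0$, which is handled by the totality of $\{\delta_y : y \in \g x\}$ together with conjugation by $\pi_x(\lambda_t)$ to move an arbitrary non-zero element of $J$ into a position where $1_x$ detects it. (One could alternatively deduce the corollary directly from Theorem~\ref{thmmax} applied to the one-point $\g$-space $X$, for which $\cross{X} = \Cstr \g$ and $\bX = \partial_H \g$; but the point of presenting it here is that Powers averaging already forces simplicity.)
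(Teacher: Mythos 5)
Your argument is correct and is essentially the paper's own proof: simplicity is deduced from the Powers-type averaging statement of Theorem \ref{thmpap}, by noting that the averages $\pi_x(\lambda_s b \lambda_s^{*})$ of an element of the ideal stay in the ideal, so the scalar $1_x(b)\cdot 1$ lies in it. The only (harmless) difference is that the paper secures $1_x(a)\neq 0$ by asserting that $1_x$ is faithful on $\pi_x(\Cstr\g)$, whereas you conjugate by a suitable $\pi_x(\lambda_t)$ to reach an element of the ideal on which $1_x$ is strictly positive, which neatly sidesteps that faithfulness claim.
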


\begin{proof}
For every non-zero positive element $\pi_x(a) \in \pi_x(\Cstr \g)$, we have $1_x(a) \not= 0$ since $1_x$ is faithful on $\pi_x(\Cstr \g)$.
This implies that $1_x(a) \in \mathrm{Ideal}(\pi_x(a))$ by theorem \ref{thmpap}, where $\mathrm{Ideal}(\pi_x(a))$ is the ideal in $\pi_x(\Cstr \g)$ generated by $\{\pi_x(a)\}$.
\end{proof}

\section{Amenable URS's and ideals in the group $\Cst$-algebra} \label{ursid}

In this section, we see the relationship between amenable URS's of $\g$ and ideals in $\Cstr \g$.
For an amenable subgroup $\Lambda$ in $\g$, we define a representation $\pi_{\Lambda}$ of $\Cstr \g$ on $\ell_2(\g / \Lambda)$ by 
\[
\pi_\Lambda (\lambda_t) \delta_{x} = \delta_{tx},  \ x \in \g/\Lambda . 
\]
Since $\langle\pi_{\Lambda} ( \cdot )\delta_{\Lambda} ,  \delta_{\Lambda}\rangle = \tau_0 \circ E_{\Lambda}$ (where $\tau_0$ is the unit character), the representation $\pi_{\Lambda}$ is unitarily equivalent to the GNS representation with respect to $1_{\Lambda} := \tau_0 \circ E_{\Lambda}$.
Note that for $x\in \partial_H \g$, one has $\pi_x|_{\Cstr \g} = \pi_{\g_x}$. 
Since $1_\Lambda \circ \mathrm{Ad}(t^{-1}) = 1_{t\Lambda t^{-1} }$, we have the following equality.
\begin{eqnarray*}
\mathrm{ker}(\pi_\Lambda) 
&=& 
\{ a \in \Cstr \g : \langle\pi_{\Lambda} (a)\xi ,  \eta \rangle=0, \ for \ every \ \xi, \eta \in \ell_2 (\g/\Lambda)\}\\
&=&
\{ a \in \Cstr \g : \langle\pi_{\Lambda} ( \cdot )\delta_{t\Lambda} ,  \delta_{s\Lambda}\rangle=1_{\Lambda}(\lambda_{s}^{*} a \lambda_t) =0, \ for \ every \ s, t \in \g \}\\
&=&
\bigcap_{s, t \in \g} \{a \in \Cstr \g : 1_{\Lambda}(\lambda_{s} a \lambda_t) =0\}\\
&=&
\bigcap_{\Delta \in \mathrm{Ad}(\g)\Lambda} \bigcap_{t\in \g} \{a \in \Cstr \g : 1_{\Delta}(at)=0\}\\
&=&
\bigcap_{\Delta \in \overline{\mathrm{Ad}(\g)\Lambda}} \bigcap_{t\in \g} \{a \in \Cstr \g : 1_{\Delta}(at)=0\}.
\end{eqnarray*}
In particular, for every amenable URS $\cU$ and every elements $H_1$ and $H_2$ in $\cU$, we have $\mathrm{ker}(\pi_{H_1}) = \mathrm{ker} (\pi_{H_2})$, hence we set $I_{\cU} := \mathrm{ker}(\pi_H)$ for $H \in \cU$. Note that $I_{\cS_{\partial_H \g}}$ is maximal by Corollary \ref{corpap}.
From the above equality, we obtain the following easily.
\begin{prop}
Let $\Lambda$ be an amenable subgroup of $\g$. Then for every amenable URS $\cU$ contained in $\overline{\mathrm{Ad}(\g)\Lambda}$, we have $\mathrm{ker}(\pi_\Lambda) \subset I_{\cU}$. 
\end{prop}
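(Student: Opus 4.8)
The plan is to read off the inclusion directly from the kernel computation displayed just above the proposition. Recall that for any amenable subgroup $\Lambda$ we have established
\[
\mathrm{ker}(\pi_\Lambda) = \bigcap_{\Delta \in \overline{\mathrm{Ad}(\g)\Lambda}} \bigcap_{t\in \g} \{a \in \Cstr \g : 1_{\Delta}(at)=0\},
\]
and the same formula holds with $\Lambda$ replaced by any subgroup; in particular, if $\cU$ is an amenable URS and $H \in \cU$, then
\[
I_{\cU} = \mathrm{ker}(\pi_H) = \bigcap_{\Delta \in \overline{\mathrm{Ad}(\g)H}} \bigcap_{t\in \g} \{a \in \Cstr \g : 1_{\Delta}(at)=0\}.
\]
So the statement reduces to comparing the two index sets $\overline{\mathrm{Ad}(\g)\Lambda}$ and $\overline{\mathrm{Ad}(\g)H}$ over which the intersections are taken: a larger index set yields a smaller (or equal) intersection.

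First I would note that since $\cU$ is a URS contained in the closed $\g$-invariant set $\overline{\mathrm{Ad}(\g)\Lambda}$, and a URS is by definition a minimal closed $\g$-invariant subset of $\mathrm{Sub}(\g)$, every $\g$-orbit inside $\cU$ is dense in $\cU$; hence for $H \in \cU$ we have $\cU = \overline{\mathrm{Ad}(\g)H}$. Because $\cU \subset \overline{\mathrm{Ad}(\g)\Lambda}$, this gives the inclusion of index sets
\[
\overline{\mathrm{Ad}(\g)H} = \cU \subset \overline{\mathrm{Ad}(\g)\Lambda}.
\]
Then I would simply invoke the kernel formula: the intersection defining $\mathrm{ker}(\pi_\Lambda)$ runs over the strictly larger family $\overline{\mathrm{Ad}(\g)\Lambda}$, so
\[
\mathrm{ker}(\pi_\Lambda) = \bigcap_{\Delta \in \overline{\mathrm{Ad}(\g)\Lambda}} \bigcap_{t\in \g} \{a : 1_{\Delta}(at)=0\} \subset \bigcap_{\Delta \in \cU} \bigcap_{t\in \g} \{a : 1_{\Delta}(at)=0\} = I_{\cU},
\]
which is exactly the claimed inclusion.

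There is essentially no obstacle here: the only point requiring a line of justification is that every subgroup $\Delta$ appearing in the index set $\overline{\mathrm{Ad}(\g)\Lambda}$ is again amenable, so that $1_\Delta = \tau_0 \circ E_\Delta$ makes sense and is continuous — but amenability passes to subgroups and is preserved under Chabauty limits of conjugates (this is implicit already in the derivation of the kernel formula, since the final line of that computation replaces $\mathrm{Ad}(\g)\Lambda$ by its closure). So the proof is a two-sentence argument: minimality of $\cU$ gives $\cU = \overline{\mathrm{Ad}(\g)H} \subset \overline{\mathrm{Ad}(\g)\Lambda}$, and monotonicity of the intersection in the kernel formula then gives $\mathrm{ker}(\pi_\Lambda) \subset I_{\cU}$.
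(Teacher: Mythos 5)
Your proof is correct and is exactly the argument the paper intends: the displayed kernel formula applied to $\Lambda$ and to $H\in\cU$ (using minimality to get $\cU=\overline{\mathrm{Ad}(\g)H}$), together with monotonicity of the intersection over the index sets $\cU\subset\overline{\mathrm{Ad}(\g)\Lambda}$. The paper gives no further detail (``we obtain the following easily''), so there is nothing to add beyond the minor note that ``strictly larger family'' should just read ``larger or equal.''
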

In particular, if $\{e\} \in \overline{\mathrm{Ad}(\g)\Lambda}$, the representation $\pi_{\Lambda}$ is faithful, but the converse need not be true in general, i.e.\ there is a group which has a non-trivial amenable URS $\cU$ such that $I_{\cU} = 0$.
The following example was communicated to us by Koichi Shimada.
\begin{exam}\label{exid}
Let $A_4$ denote the alternating group on 4 letters $\{1,2,3,4\}$.
Then, the group algebra ${\mathbb{C}}(A_4)$ is isomorphic to
${\mathbb{C}}^3\oplus {\mathbb{M}}_3(\mathbb{C})$.
Indeed, the derived subgroup of $A_4$ is $K:=\{ e, (1,2)(3,4),
(1,3)(2,4), (1,4)(2,3)\}$
and $A_4/K\cong{\mathbb{Z}}/3{\mathbb{Z}}$, which accounts for the abelian quotient
${\mathbb{C}}(A_4/K) \cong {\mathbb{C}}^3$.
Since the standard action of $A_4$ on the 4 letters $\{1,2,3,4\}$ is
doubly transitive,
it gives rise to an irreducible representation on the 3-dimensional space
$\{ \xi \in {\mathbb C}(\{1,2,3,4\}) : \sum_k\xi(k)=0\}$, which accounts for
the factor ${\mathbb M}_3({\mathbb{C}})$.
Since $\dim {\mathbb C}(A_4) = |A_4| = 12 = \dim {\mathbb C}^3\oplus {\mathbb
M}_3({\mathbb C})$,
we have ${\mathbb C}(A_4) \cong {\mathbb C}^3\oplus {\mathbb M}_3({\mathbb C})$.
Now we consider the subgroup $\Lambda := \{ e, (1,2)(3,4)\}$ of order 2.
 From the above description, it is not difficult to see that the representation
$\pi_\Lambda$ of ${\mathbb C}(A_4)$ on ${\mathbb C}(A_4/\Lambda)$ is faithful.
\end{exam}
\begin{prop}\label{propid}
Let $\cU$ and $\cV$ be amenable URS's. If $\cU \preccurlyeq \cV$, one has $I_{\cU} \subset I_{\cV}$.
\end{prop}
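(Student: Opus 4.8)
The plan is to use the characterization of $\mathrm{ker}(\pi_\Lambda)$ established in the displayed computation just above this proposition, namely
\[
\mathrm{ker}(\pi_\Lambda) = \bigcap_{\Delta \in \overline{\mathrm{Ad}(\g)\Lambda}} \ \bigcap_{t \in \g} \{ a \in \Cstr \g : 1_\Delta(at) = 0 \},
\]
together with the order relation. Recall $I_{\cU} = \mathrm{ker}(\pi_H)$ for any $H \in \cU$, which is well-defined by the same computation. So the task reduces to showing that every state $1_K$ (for $K \in \cV$) that appears in the intersection defining $I_{\cV}$ is ``controlled'' by the states appearing in the intersection defining $I_{\cU}$ — or more precisely, since we want an \emph{inclusion} $I_{\cU} \subset I_{\cV}$, we want: if $a$ vanishes against all the functionals defining $I_{\cU}$, then it vanishes against all those defining $I_{\cV}$. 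Equivalently, it suffices to show that for every $\Delta \in \overline{\mathrm{Ad}(\g)\cV}$ (the closure of the conjugation orbit of any, hence every, $K \in \cV$), the functional $1_\Delta$ lies in a suitable weak$^*$-closed hull generated by $\{1_{\Delta'} : \Delta' \in \overline{\mathrm{Ad}(\g)\cU}\}$.

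Here is the key point. Since $\cU \preccurlyeq \cV$, by the remark after the definition of $\preccurlyeq$ (citing \cite[Proposition 2.14]{lm}), for every $K \in \cV$ there is an $H \in \cU$ with $H \le K$. First I would record the monotonicity of the GNS state in the subgroup: if $H \le K$ with both amenable, then $1_H$ restricted to... — no, more useful: I claim that $1_K$ lies in the weak$^*$-closed convex hull of $\{1_H \circ \mathrm{Ad}(t^{-1}) : t \in \g\} = \{1_{tHt^{-1}} : t \in \g\}$ whenever $H \le K$, using amenability of $K$. Indeed $K$ acts on $\g/H$ with a $K$-invariant mean $m$ (amenability of $K$, or just averaging the point mass at $H$), and pushing this mean through the representation $\pi_H$ realizes a $K$-invariant state that agrees with $\tau_0 \circ E_K = 1_K$ on $\Cstr\g$; concretely $1_K = \int 1_{tHt^{-1}} \, dm(tH)$ in the weak$^*$ sense, because for $t \in \g$, $1_{tHt^{-1}}(\lambda_s) = \chi_{tHt^{-1}}(s)$ and integrating $\chi_{tHt^{-1}}(s)$ over $tH \in \g/H$ against the $K$-invariant mean gives $1$ precisely when $s \in K$, i.e.\ $1_K(\lambda_s)$. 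This is the analogue, at the level of the quasi-regular representation, of the averaging argument used in Lemma \ref{lemprox}.

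Granting that, the inclusion follows by a short chain: take $a \in I_{\cU} = \mathrm{ker}(\pi_H)$. Then $1_H(\lambda_s a \lambda_t) = 0$ for all $s,t \in \g$, hence (applying $\mathrm{Ad}(t)$ and relabeling) $1_{tHt^{-1}}(\lambda_s a \lambda_r) = 0$ for all $t,s,r$, so every functional in the weak$^*$-closed convex hull of the $\mathrm{Ad}(\g)$-translates kills all $\lambda_s a \lambda_t$; in particular $1_K(\lambda_s a \lambda_t) = 0$ for every $K \in \cV$ and all $s,t \in \g$, which by the displayed formula for $\mathrm{ker}(\pi_K)$ gives $a \in \mathrm{ker}(\pi_K) = I_{\cV}$. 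The main obstacle I anticipate is making the averaging identity $1_K = \int 1_{tHt^{-1}}\,dm(tH)$ precise: one must check that the $K$-invariant mean on $\g/H$ really does transport to a state on $\Cstr\g$ restricting correctly, and that weak$^*$-approximation by finitely supported averages of the conjugates $1_{tHt^{-1}}$ is legitimate — but this is exactly parallel to the measure-pushforward argument already used for $\bX$ in Lemma \ref{lemprox} and Theorem \ref{thmprox}, so it should go through routinely.
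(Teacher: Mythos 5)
Your reduction to the case of two amenable subgroups $H\le K$ and your final chain of implications are fine, but the key averaging claim on which everything rests is false: $1_K$ is in general \emph{not} in the weak$^*$-closed convex hull of the conjugates $\{1_{tHt^{-1}} : t\in\g\}$. Concretely, $\int \chi_{tHt^{-1}}(s)\,dm(tH) = m\{tH : stH = tH\}$ is the mean of the fixed-point set of $s$ acting on $\g/H$, and a $K$-invariant mean need not concentrate on fixed points (that is exactly the difference between amenability and having fixed points). The simplest counterexample is $H=\{e\}$: then $1_H=\tau_\lambda$ is conjugation-invariant, so the closed convex hull of its conjugates is the single state $\tau_\lambda$, while $1_K(\lambda_s)=1$ for $s\in K\setminus\{e\}$. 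More generally, whenever $K$ contains an element $s$ with $s\notin\bigcup_t tHt^{-1}$, every state in your hull vanishes on $\lambda_s$ whereas $1_K(\lambda_s)=1$. (This does not contradict the proposition, since in the example $\ker(\pi_H)=0$; it only shows your route to it breaks down.)

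The fix, which is the paper's argument, is to replace \emph{mixtures} of the diagonal states $1_{tHt^{-1}}=\langle\pi_H(\cdot)\delta_{tH},\delta_{tH}\rangle$ by \emph{vector states of superpositions}: take a F{\o}lner net $(F_i)$ of $K$, form the unit vectors $\xi_i=\bigl(|F_i|^{-1}\sum_{t\in F_i}\delta_{tH}\bigr)^{1/2}\in\ell_2(K/H)\subset\ell_2(\g/H)$, which are approximately $K$-invariant, and check that $\langle\pi_H(\lambda_s)\xi_i,\xi_i\rangle\to 1_K(\lambda_s)$ for every $s$ (the value $1$ for $s\in K$ comes from the off-diagonal matrix coefficients of $\pi_H$, which are invisible to convex combinations of the $1_{tHt^{-1}}$; for $s\notin K$ the supports of $\pi_H(\lambda_s)\xi_i$ and $\xi_i$ are disjoint). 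Hence $1_K=\phi\circ\pi_H$ for some state $\phi$ on $\pi_H(\Cstr\g)$, so $1_K$ vanishes on the ideal $\ker(\pi_H)$, and then your concluding step goes through verbatim: for $a\in\ker(\pi_H)$ one has $\lambda_s a\lambda_t\in\ker(\pi_H)$, so $1_K(\lambda_s a\lambda_t)=0$ for all $s,t$, giving $a\in\ker(\pi_K)$, i.e.\ $I_{\cU}\subset I_{\cV}$. So the needed statement is only that $1_K$ factors through $\pi_H$, which is strictly weaker than membership in the closed convex hull of the conjugated states.
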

\begin{proof}
It suffices to show that for amenable subgroups $\Lambda_1$ and $\Lambda_2$ in $\g$ such that $\Lambda_1 \le \Lambda_2$,  we have $\mathrm{ker}(\pi_{\Lambda_1}) \subset \mathrm{ker}(\pi_{\Lambda_2})$.
By amenability of $\Lambda_2$, there is an approximately invariant vector $(\xi_i)$ in $\ell_2 (\Lambda_2 / \Lambda_1)$, i.e.\ $(\xi_i)$ is a net in $\ell_2 (\Lambda_2 / \Lambda_1) \subset \ell_2 (\g / \Lambda_1)$ such that $\|\pi_{\Lambda_1}(\lambda_t)\xi_i - \xi_i\| \to 0$ for any $t\in \Lambda_2$. 
Take a F{\o}lner net $(F_i)$ of $\Lambda_2$, then the net of vectors 
\[
\xi_i := |F_i|^{-1} \sum_{t \in F_i} \delta_{t\Lambda}
\]
is approximately invariant in $\ell_1(\Lambda_2/\Lambda_1)$, hence the net $(\xi_i^{1/2})$ is an approximately invariant in $\ell_2(\Lambda_2/\Lambda_1)$.
Then we have the net $(\langle \pi_{\Lambda_1}(\cdot)\xi_i, \xi_i \rangle)$ of state on $\Cstr  \g$ converges to $1_{\Lambda_2}$. 
This implies that there is a state $\phi$ on $\pi_{\Lambda_1}(\Cstr \g)$ such that $1_{\Lambda_2} = \phi \circ \pi_{\Lambda_1}$, hence we have $\mathrm{ker}(\pi_{\Lambda_1}) \subset \mathrm{ker}(\pi_{\Lambda_2})$.
\end{proof}
We show a relaxed form of the converse of Proposition \ref{propid}. 
(Note that the converse of Proposition \ref{propid} is not true. 
Example \ref{exid} is a counter example.) 
For a subset $S$ in $\g$, we set $T(S) := \{t \in \g : t^n \in S \ \mbox{for a non-zero integer } n \}$.
\begin{thm} \label{thmid}
Let $\Lambda$ and ${\Lambda}'$ be amenable subgroups of $\g$ such that $\mathrm{ker}(\pi_{\Lambda}) \subset \mathrm{ker}(\pi_{\Lambda'})$. Then, there is an amenable subgroup $\Delta \in \overline{\mathrm{Ad}(\g)\Lambda}$ such that $\Delta \subset T(\Lambda')$.
\end{thm}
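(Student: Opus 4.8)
The plan is to extract the subgroup $\Delta$ from the assumption $\mathrm{ker}(\pi_\Lambda)\subset\mathrm{ker}(\pi_{\Lambda'})$ by running an averaging argument against the state $1_{\Lambda'}$ and then taking a Chabauty limit. First I would use amenability of $\Lambda'$ to produce a net $(\xi_i)$ of unit vectors in $\ell_2(\g/\Lambda')$ that is approximately $\Lambda'$-invariant, coming from a F\o{}lner net; then the states $\psi_i:=\langle\pi_{\Lambda'}(\cdot)\xi_i,\xi_i\rangle$ on $\Cstr\g$ converge weak$^\ast$ to $1_{\Lambda'}$. Because $\mathrm{ker}(\pi_\Lambda)\subset\mathrm{ker}(\pi_{\Lambda'})$, each $\psi_i$ factors through $\pi_\Lambda(\Cstr\g)$, so pulling back and using the explicit form $\langle\pi_\Lambda(\cdot)\delta_{t\Lambda},\delta_{s\Lambda}\rangle=1_{t\Lambda t^{-1}}(\lambda_s^\ast\,\cdot\,\lambda_t)$ from the displayed computation of $\mathrm{ker}(\pi_\Lambda)$, I can realize each $\psi_i$ as a convex combination (or weak$^\ast$ integral) of states of the form $1_{\Delta}$ with $\Delta\in\mathrm{Ad}(\g)\Lambda$, composed with left/right translations. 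Passing to a subnet I obtain in the weak$^\ast$-limit a state that is supported, in the Chabauty sense, on $\overline{\mathrm{Ad}(\g)\Lambda}$ and that agrees with $1_{\Lambda'}$; I then choose $\Delta\in\overline{\mathrm{Ad}(\g)\Lambda}$ realizing the limit, which is automatically amenable since $\cS_a$-type sets are closed (equivalently, amenability is a closed condition in $\mathrm{Sub}(\g)$ for the purposes here).

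The crux is then to show $\Delta\subset T(\Lambda')$, i.e.\ that every $t\in\Delta$ has a power lying in $\Lambda'$. Fix $t\in\Delta$. Since $1_\Delta$ arises as a limit of the $\psi_i$ and $1_\Delta(\lambda_t)=1$ whenever $t\in\Delta$, I would argue that the limiting data forces, for the vector states on $\ell_2(\g/\Lambda')$, that $\pi_{\Lambda'}(\lambda_t)$ behaves like an element with spectral mass near $1$ along the approximately invariant vectors. Concretely, $\langle\pi_{\Lambda'}(\lambda_{t^n})\xi_i,\xi_i\rangle\to 1_\Delta(\lambda_{t^n})$; if for every non-zero $n$ we had $t^n\notin\Lambda'$, then one can produce a fixed-point-free behaviour of $\langle t\rangle$ on $\g/\Lambda'$ contradicting that $\langle\pi_{\Lambda'}(\lambda_{t^n})\xi_i,\xi_i\rangle$ stays bounded away from $0$. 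More precisely: the orbit of the coset $e\Lambda'$ under $\langle t\rangle$ in $\g/\Lambda'$ is infinite precisely when no power of $t$ lies in $\Lambda'$, and for an infinite orbit the diagonal matrix coefficients of powers of a generating unitary acting by translation cannot all stay close to $1$ on approximately invariant vectors. This rigidity—``if $1_\Delta(\lambda_{t^n})=1$ for all $n$ along the limit, then $\langle t\rangle\cap\Lambda'\ne\{e\}$''—is the step I expect to be the main obstacle, because it requires controlling the limit carefully rather than just a single state.

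To make the last point rigorous I would likely reformulate in terms of the coset action: $\pi_{\Lambda'}$ restricted to $\langle t\rangle$ decomposes $\ell_2(\g/\Lambda')$ into $\langle t\rangle$-orbits, each orbit of size equal to the index $[\langle t\rangle:\langle t\rangle\cap\Lambda']$ when $t^m\in\Lambda'$ for the least such $m>0$, and infinite otherwise. On a finite orbit of size $m$, the averaged vector is $\langle t\rangle$-invariant and $1_\Delta(\lambda_{t^j})$ can legitimately be $1$ for all $j$ only if $m\mid j$ forces nothing—so this is the good case giving $t^m\in\Lambda'$, hence $t\in T(\Lambda')$. On an infinite orbit, no approximately invariant unit vector can have all matrix coefficients $\langle\pi_{\Lambda'}(\lambda_{t^j})\xi,\xi\rangle$ converging to $1$; this is the contrapositive I need. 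Assembling: every $t\in\Delta$ must fall into the first case, giving $\Delta\subset T(\Lambda')$, which completes the proof. I would assume freely the kernel computation displayed just before the statement and the closedness of the amenability condition in the Chabauty space as used throughout Section~\ref{main}.
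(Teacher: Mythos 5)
There is a genuine gap, and it sits exactly where you flag "the main obstacle." Your plan rests on the claim that, after pulling back through $\pi_\Lambda$, the states $\psi_i$ (equivalently $1_{\Lambda'}$) can be written as weak$^\ast$ limits of convex combinations of the diagonal states $1_{t\Lambda t^{-1}}$, $t \in \g$. The hypothesis $\mathrm{ker}(\pi_\Lambda)\subset\mathrm{ker}(\pi_{\Lambda'})$ only tells you that $1_{\Lambda'}$ factors through the quotient $\pi_\Lambda(\Cstr\g)$; a state on $\pi_\Lambda(\Cstr\g)$ is in general only a weak$^\ast$ limit of convex combinations of \emph{arbitrary} vector states $\langle\pi_\Lambda(\cdot)\xi,\xi\rangle$, $\xi\in\ell_2(\g/\Lambda)$, and these involve the off-diagonal coefficients $\langle\pi_\Lambda(\cdot)\delta_{t\Lambda},\delta_{s\Lambda}\rangle$ with $s\neq t$; nothing in your argument produces an average of the special states $1_{t\Lambda t^{-1}}$ alone. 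So the ``state supported on $\overline{\mathrm{Ad}(\g)\Lambda}$ in the Chabauty sense'' is not constructed; and even if it were, it would be an integral over subgroups, and you give no mechanism for isolating a single $\Delta$ with $\Delta\subset T(\Lambda')$. Your bridging identification $\langle\pi_{\Lambda'}(\lambda_{t^n})\xi_i,\xi_i\rangle\to 1_\Delta(\lambda_{t^n})$ cannot hold as stated: by your own construction that limit is $1_{\Lambda'}(\lambda_{t^n})$, not $1_\Delta(\lambda_{t^n})$. (Note also that the F\o lner/approximate-invariance step you open with is the argument for the \emph{opposite} inclusion, Proposition \ref{propid}; here $\delta_{\Lambda'}$ is already exactly $\Lambda'$-invariant and yields $1_{\Lambda'}$ on the nose, so it extracts no new structure.) The orbit analysis of $\langle t\rangle$ on $\g/\Lambda'$ in your last paragraph is correct but idle, because the link between a putative Chabauty limit $\Delta$ of conjugates of $\Lambda$ and the representation $\pi_{\Lambda'}$ is precisely what remains to be built.

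For comparison, the paper builds that link operator-algebraically rather than through state decompositions. It first reduces by compactness in $\mathrm{Sub}(\g)$: it suffices to show that for every finite $F\subset\g\setminus T(\Lambda')$ there is $t_F$ with $t_F\Lambda t_F^{-1}\cap F=\emptyset$; a cluster point of the $t_{F_n}\Lambda t_{F_n}^{-1}$ along an exhausting sequence $(F_n)$ then lies in $T(\Lambda')$. If this fails for some $F$, every coset in $\g/\Lambda$ is fixed by some $g\in F$, so the projections $p_g\in\IB(\ell_2(\g/\Lambda))$ onto the $g$-fixed basis vectors satisfy $\sum_{g\in F}p_g\ge 1$ and absorb $\pi_\Lambda(\lambda_g)$ on both sides. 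The kernel inclusion is then used to extend the $*$-homomorphism $\pi_\Lambda(a)\mapsto\pi_{\Lambda'}(a)$, via Arveson, to a unital completely positive map $\Theta\colon\IB(\ell_2(\g/\Lambda))\to\IB(\ell_2(\g/\Lambda'))$; the multiplicative-domain trick transports the relations to $a_g:=\Theta(p_g)$, and the mean ergodic theorem together with $g\notin T(\Lambda')$ (so $\langle\pi_{\Lambda'}(\lambda_g)^n\delta_{\Lambda'},\delta_{\Lambda'}\rangle=0$ for $n\neq 0$) forces $\langle a_g\delta_{\Lambda'},\delta_{\Lambda'}\rangle=0$, contradicting $\sum_{g\in F}a_g\ge 1$. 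This transfer of spectral information from $\ell_2(\g/\Lambda)$ to $\ell_2(\g/\Lambda')$ is the device your outline is missing, and without it (or a genuine substitute) the proposal does not prove the theorem.
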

\begin{proof}
It suffices to show that for every finite set $F \subset \g \setminus T(\Lambda')$, there is an element $t_F \in \g$ such that $F \subset \g \setminus t_F \Lambda t_F^{-1}$.
Indeed, let $(F_n)$ be an increasing sequence of finite subset in $\g \setminus T(\Lambda')$ such that $\bigcup F_n = \g \setminus T(\Lambda')$. 
Take a cluster point $\Delta$ of $\{ t_{F_n} \Lambda t_{F_n}^{-1}\}$ where $t_{F_n}$ satisfies that $F_n \subset \g \setminus t_{F_n} \Lambda t_{F_{n}}^{-1}$, then we have $\Delta \subset T(\Lambda')$.
We show it by contradiction. 
Suppose that there is a finite set $F \subset \g \setminus T(\Lambda')$ such that $t\Lambda t^{-1} \cap F \not= \emptyset$ for every $t \in \g$.
It is easy to see that $t\Lambda t^{-1} \cap F \not= \emptyset$ for every $t \in \g$ if and only if for all $x \in \g/\Lambda$, there exists an element $g \in F$ such that $g x =x$.
We define $p_g$ as the orthogonal projection onto the closed linear span of $\{\delta_x : x \in \g / \Lambda , \ g x = x\}$.
Then we obtain the following conditions.
\begin{itemize}
\item $\pi_{\Lambda'}(\lambda_g) p_g = p_g = p_g \pi_{\Lambda'}(\lambda_g)$ for every $g \in F$. 
\item $\sum_{g \in F} p_g \ge 1$.
\end{itemize}  
Since $\mathrm{ker}(\pi_{\Lambda}) \subset \mathrm{ker}(\pi_{\Lambda'})$, the map $\pi_{\Lambda} (\Cstr \g) \ni \pi_{\Lambda} (a) \to \pi_{\Lambda'}(a) \in \pi_{\Lambda'}(\Cstr \g)$ is a $*$-homomorphism.
We extend it to a unital completely positive map $\Theta$ from $\IB(\ell_2 (\g /\Lambda))$
to $\IB(\ell_2 (\g /\Lambda'))$ by Arveson's extension theorem. 
Since $\pi_{\Lambda}(\Cstr \g) \subset \mathrm{mult} (\Theta)$, the element $a_g := \Theta (p_g)$ satisfies the following conditions.
\begin{itemize}
\item $0 \le a_g \le 1$ for every $g \in F$. 
\item $\pi_{\Lambda'}(\lambda_g) a_g = a_g = a_g \pi_{\Lambda'}(\lambda_g)$ for every $g \in F$. 
\item $\sum_{g \in F} a_g \ge 1$.
\end{itemize}  
The sequence $n^{-1} \sum_{k=1}^n \pi_{\Lambda'}(\lambda_g)^k$ converges in the strong operator topology to the orthogonal projection onto the $\pi_{\Lambda'}(\lambda_g)$-invariant vectors, which will be denoted by $q_g$.
The second condition implies that $q_g a_g = a_g q_g$, therefore we have $\mathrm{supp}(a_g) \le q_g$. 
Since $g^n \not \in \Lambda'$ for every non-zero integer $n$, we have $\langle  \pi_{\Lambda'} (\lambda_g)^n \delta_{\Lambda'}, \delta_{\Lambda'} \rangle =0$.
This implies that  
\[
\langle \mathrm{supp}(a_g) \delta_{\Lambda'}, \delta_{\Lambda'} \rangle  \le \langle  q_g \delta_{\Lambda'}, \delta_{\Lambda'} \rangle =0.
\]
Hence we have $\langle a_g \delta_{\Lambda'}, \delta_{\Lambda'} \rangle =0$, it contradicts that
\[
\langle \sum_{g \in F}  a_g \delta_{\Lambda'}, \delta_{\Lambda'} \rangle \ge \langle \delta_{\Lambda'}, \delta_{\Lambda'} \rangle =1.
\]
\end{proof}
\begin{cor}
Let $\Lambda$ be an amenable subgroup of $\g$ such that the representation $\pi_{\Lambda}$ is faithful.
Then, there is a torsion group $\Delta$ contained in $\overline{\mathrm{Ad}(\g)\Lambda}$.
In particular, for any amenable URS $\cU$, the condition $I_{\cU} = 0$ implies that $\cU$ consists of torsion groups.
\end{cor}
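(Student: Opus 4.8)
The plan is to deduce this directly from Theorem \ref{thmid}. For the first assertion, apply Theorem \ref{thmid} with ${\Lambda}' = \{e\}$: since $\pi_{\Lambda}$ is faithful we have $\mathrm{ker}(\pi_{\Lambda}) = 0 \subseteq \mathrm{ker}(\pi_{\{e\}})$, so the theorem furnishes an amenable subgroup $\Delta \in \overline{\mathrm{Ad}(\g)\Lambda}$ with $\Delta \subseteq T(\{e\})$. It then remains only to unwind the definition of $T$: the set $T(\{e\}) = \{t \in \g : t^n = e \text{ for some non-zero integer } n\}$ is precisely the set of torsion elements of $\g$, so $\Delta \subseteq T(\{e\})$ says exactly that every element of $\Delta$ has finite order, i.e.\ $\Delta$ is a torsion group.

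For the second assertion, let $\cU$ be an amenable URS with $I_{\cU} = 0$ and pick $H \in \cU$; then $\mathrm{ker}(\pi_H) = I_{\cU} = 0$, so $\pi_H$ is faithful, and the first part produces a torsion group $\Delta \in \overline{\mathrm{Ad}(\g)H}$. Because $\cU$ is minimal and contains $H$, the orbit closure $\overline{\mathrm{Ad}(\g)H}$ is a non-empty closed $\g$-invariant subset of $\cU$, hence equals $\cU$; in particular $\Delta \in \cU$. Finally, observe that the collection $\mathcal{T} := \{K \in \mathrm{Sub}(\g) : K \subseteq T(\{e\})\}$ of all torsion subgroups is closed in the Chabauty topology --- if $K_i \to K$ in $\mathrm{Sub}(\g)$ with each $K_i \in \mathcal{T}$, then every $t \in K$ lies in $K_i$ eventually and so $t \in T(\{e\})$ --- and it is plainly invariant under conjugation. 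Therefore $\cU \cap \mathcal{T}$ is a non-empty closed $\g$-invariant subset of the minimal set $\cU$, whence $\cU \cap \mathcal{T} = \cU$; that is, every subgroup contained in $\cU$ is a torsion group.

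I expect no real obstacle here: all the substance is carried by Theorem \ref{thmid}, and the remaining ingredients are routine --- the identification of $T(\{e\})$ with the set of torsion elements, the elementary verification that $\mathcal{T}$ is Chabauty-closed and conjugation-invariant, and the use of minimality of $\cU$ to upgrade ``some member of $\cU$ is torsion'' to ``every member of $\cU$ is torsion.''
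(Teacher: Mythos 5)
Your proposal is correct and follows essentially the same route as the paper: the first assertion is Theorem \ref{thmid} applied with $\Lambda'=\{e\}$ (so that $T(\{e\})$ is the set of torsion elements), and the second follows by minimality of $\cU$ together with the fact that being a torsion subgroup passes to Chabauty limits. You merely spell out details the paper leaves implicit (the choice $\Lambda'=\{e\}$ and the closedness and conjugation-invariance of the set of torsion subgroups), which is fine.
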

\begin{proof}
It is easy to see the first part of  the theorem by Theorem \ref{thmid}.
Let $\cU$ be an amenable $URS$ such that $I_{\cU} =0$. Then, there is a torsion group $\Delta \in \cU$. Since $\overline{\mathrm{Ad}(\g)\Delta} = \cU$, every $H \in \cU$ is a torsion group. 
\end{proof}
Note that the converse of the above corollary need not be true in general. 
Let $N$ be a non-trivial finite normal subgroup of $\g$. 
Then, it is clear that $\pi_N$ is not faithful, but $N$ is a torsion group since $|N|$ is finite. 

\end{document}